\definecolor{darkgreen}{HTML}{3CB50F}
\theoremstyle{plain}
\newtheorem{theorem}{Theorem}[section]
\newtheorem{cor}[theorem]{Corollary}
\newtheorem{lem}[theorem]{Lemma}
\newtheorem{prop}[theorem]{Proposition}
\theoremstyle{definition}
\newtheorem{definition}[theorem]{Definition}
\newtheorem{conj}[theorem]{Conjecture}
\newtheorem{remark}[theorem]{Remark}
\newtheorem{rmk}[theorem]{Remark}
\newtheorem{example}[theorem]{Example}
\theoremstyle{plain}
\newtheorem*{thm1}{Theorem A}
\newtheorem*{thm2}{Theorem B}
\newtheorem*{coro}{Corollary \ref{cor: EffRigid}}
\theoremstyle{remark}
\newcommand{\PP}{\mathbb{P}}
\newcommand{\EFF}{\mathrm{Eff}}
\newcommand{\Mov}{\mathrm{Mov}}
\newcommand{\MOV}{\mathrm{Mov}}
\newcommand{\OO}{\mathcal O}
\newcommand{\OOT}{\mathcal{O}_{\mathbb{P}^2}}
\newcommand{\Hom}{\operatorname{Hom}}
\def\Hom{{\rm Hom}\,}
\def\PP{{\mathbb P}}
\begin{document}

\title{Geometry of syzygies of sheaves on $\mathbb{P}^2$ via interpolation and Bridgeland stability}

\author{Manuel Leal}
\author{C\'esar Lozano Huerta}
\author{Tim Ryan}

\address{Currently: Harvard University\\
Department of Mathematics \\
Oxford 1, Cambridge, MA, USA.}
\email{lozano@math.harvard.edu}

\address{Permanent: Universidad Nacional Aut\'onoma de M\'exico\\
Instituto de Matem\'aticas, Unidad Oaxaca \\ Mex.}
\email{lozano@im.unam.mx}

\address{Universidad Nacional Aut\'onoma de M\'exico\\
Instituto de Matem\'aticas, Unidad Oaxaca \\
Oaxaca, Mex.}
\email{maz.leal.camacho@gmail.com}

\address{North Dakota State University\\
Department of Mathematics \\
Fargo, USA.}
\email{timothy.ryan.3@ndsu.edu} 
\keywords{Moduli of sheaves on the plane, Hilbert scheme of points on the plane, minimal free resolutions.}

\subjclass[2010]{14J60 (Primary); 13D02, 14E30 (Secondary)}

\begin{abstract} 
We show that the minimal free resolution of a general semi-stable sheaf $U$ on $\PP^2$ contains a subcomplex that determines an extremal ray of the cone of effective divisors of its moduli space. We provide evidence that this is part of a general phenomenon in which minimal free resolutions, for distinct Betti tables, contain subcomplexes depending on wall-crossing.  From this viewpoint, we provide new computations of the movable cones and Mori decompositions of some moduli spaces of sheaves using syzygies.
\end{abstract}

\maketitle
\noindent

\section{Introduction}
\label{sec: intro}
\noindent
A classical way to study a vector bundle $U$, or an ideal sheaf of points, on $\PP^2$ is by looking at its generators and their relations. These yield a free resolution which, following \cite{E}, can be written as follows
\[0 \xrightarrow{\hspace{7mm}} \bigoplus_{i=1}^m \mathcal{O}_{\PP^2}(-d_{2,i})\overset{M}{\xrightarrow{\hspace{7mm}}} \bigoplus_{i=1}^{m+k} \mathcal{O}_{\PP^2}(-d_{1,i}) \xrightarrow{\hspace{7mm}} U \xrightarrow{\hspace{7mm}} 0.\]
This (bulky) notation tells us that $U$ has $m+k$ generators (each of degree $d_{1,i}$) which satisfy $m$ relations (each in degree $d_{2,i}$ and  recorded in the matrix $M$) called \textit{syzygies}. These resolutions and syzygies are our main characters in this paper.

\medskip\noindent
A free resolution of $U$ can be made minimal by considering a minimal set of generators and a minimal set of relations among them and is unique up to isomorphism. Then, this minimal free resolution should not only tell us something important about $U$, but also about its moduli space. However, how it may do so has remained unclear except in special cases despite extensive study, 
and the present paper aims to clarify this situation.

\medskip\noindent
We show that the matrix $M$ in the minimal free resolution of $U$ after an \textit{appropriate} change of basis, whose exact description is in general subtle, can be written as
\[M=\left(
\begin{array}{c|c}
0  & A \\ \hline
 B & C 
\end{array}\right),\]
and the purpose of this paper is to exhibit the following correspondence
\begin{center}
\begin{tabular}{ c c c}
 $\left\{\begin{tabular}{ c}
 Properties of \\ $B$
 \end{tabular}\right\}$  & $\longleftrightarrow$ & $\left\{\begin{tabular}{ c}
 Bridgeland walls \& Base locus walls \\ of the moduli space of $U$
 \end{tabular}\right\}$
\end{tabular}
\end{center}

\medskip\noindent
Let us make this precise. If $U$ is a Mumford semi-stable sheaf on $\PP^2$, with character $\xi$, then it defines a point in its moduli space, denoted $M(\xi)$. This space has desirable properties: it is log Fano, $\mathbb{Q}$-factorial of expected dimension, and it has Picard rank $\le 2$. Then, we want to further understand: (1) The cone of effective divisors $\EFF \ \! M(\xi)$ and (2) The stable base locus decomposition (SBLD) of $\EFF \ \! M(\xi)$. Item (1) is answered in \cite{CHW} using interpolation. In \cite{LZ}, it is shown that walls of the SBLD are equivalent to Bridgeland walls in the space of stability conditions and thus the two items above can be understood using Bridgeland stability. This however requires substantial machinery, and our goal in this paper is to exhibit how syzygies, which are classical objects, shed light on (1) and (2). That is, we investigate:

\medskip
\textbf{Question 1:} \textit{How do minimal free resolutions determine the SBLD or Bridgeland walls?}

\medskip\noindent
Here is an example of what the answer to this question looks like. Consider $M(\xi)=\PP^{2[7]}$ the Hilbert scheme of 7 points on $\PP^2$, and let us focus on Item (1). In this case, the extremal innermost Bridgeland wall of $\mathbb{P}^{2[7]}$, called collapsing wall, consists of the stability conditions that make $\mathcal{T}_{\PP^2}(-4)$ a destabilizing object of a general $\mathcal{I}_Z$ \cite[\S 9]{ABCH}. Also, the nontrivial extremal wall of $\EFF\ \! \PP^{2[7]}$ is generated by the divisor $$J:= \{Z\in \PP^{2[7]}\ | \ h^0(E_{12/5}\otimes \mathcal{I}_Z)\ne 0 \},$$
where $E_{12/5}$ stands for \textit{the} exceptional bundle of slope $12/5$ \cite{CHW, Hui14}. 

\medskip\noindent
Let's see how we get the collapsing Bridgeland wall and the divisor $J$ from the minimal free resolution. If $Z\in \PP^{2[7]}$ has general Betti table, then its ideal sheaf $\mathcal{I}_Z$ is generated by 3 cubics $c_1,c_2,c_3$ with syzygies  $l_1c_1+l_2c_2+l_3c_3=0$ and $q_1c_1+q_2c_2+q_3c_3=0$,
where $l_i,q_i$ are linear and quadratic forms, respectively. These forms yield the map in the minimal free resolution of 
$\mathcal{I}_Z$: 
\begin{align}\label{R1}
0\xrightarrow{\hspace{7mm}} \OO(-4)\oplus\OO(-5)\overset{M=\left(
\begin{array}{cc}
l_1  & q_1 \\ 
 l_2 & q_2 \\
  l_3 & q_3 \\
\end{array}\right)}{\xrightarrow{\hspace{18mm}}} \OO(-3)^3\xrightarrow{\hspace{7mm}} \mathcal{I}_Z\xrightarrow{\hspace{7mm}} 0.
\end{align}
Let $B$ be the matrix formed by the linear forms $\{l_1,l_2,l_3\}$. When these forms are linearly independent, $B$ yields the minimal free resolution of its cokernel $\mathcal{T}_{\PP^2}(-4)$. Then, (\ref{R1}) gives
\begin{align}\label{R2}
\OO(-5)\hookrightarrow \mathcal{T}_{\PP^2}(-4)\twoheadrightarrow \mathcal{I}_Z.
\end{align}

\medskip\noindent
Thus, Resolution (\ref{R1}) \textit{contains} the minimal free resolution of the destabilizing object $\mathcal{T}_{\PP^2}(-4)$ that yields the collapsing wall of $\PP^{2[7]}$. Moreover, Resolution (\ref{R2}) implies that $Z$ is not in $J$. On the other hand, if the matrix $B$ fails to resolve $\mathcal{T}_{\PP^2}(-4)$, then $Z\in J$.

\medskip\noindent
We carry out this analysis in full generality to address Item (1). We focus (like in the example above) on the most natural extremal chamber, called primary.

\begin{thm1}[Theorem \ref{MAINdetailed}] \label{thm: main}
A general sheaf in $M(\xi)$ is not in the stable base locus of the primary extremal chamber  of the effective cone $\EFF \ \! M(\xi)$ 
if and only if its minimal free resolution contains \footnote{See Section \ref{preliminaries} for the formal definition.} the minimal free resolution of the general Bridgeland destabilizing object which induces the extremal wall of $\EFF\ \!M(\xi)$.
\end{thm1} 

\medskip\noindent
Theorem A identifies \textit{the} subcomplex of the minimal free resolution of a general sheaf that determines an extremal divisor $J$ in $\EFF \ \! M(\xi)$.  Indeed, Corollary \ref{cor: EffRigid} writes down the minimal free resolution of sheaves in $J$, when $J$ is rigid; or in its stable base locus when $J$ is movable.

\medskip\noindent
As for other walls of the SBLD, it is clear that syzygies carry information of some of them, $e.g.$, those near the ample cone \cite{CH13}, [ABCH, Sec. 10],\cite[p. 23]{Hui14}. But, how exactly they do so in general has been obscure. However, a closer look at Theorem A reveals the following pattern which allows us to expand its conclusions.

\medskip\noindent
In Theorem A, a general sheaf is not in the base locus of its moduli space $M(\xi)$ if and only if its minimal free resolution is a mapping cone of two distinguished subcomplexes:  those giving rise to a Bridgeland destabilizing sequence that induces the collapsing wall. That is, when this fails, then a sheaf \textit{enters} the base locus.  For example, consider a general $Z\in \mathbb{P}^{2[7]}$, then Resolution (\ref{R1}) is the mapping cone of the map of complexes $W_{\bullet}\to F_{\bullet}$ as follows  
\begin{center}
    \begin{tikzcd}
        W_{\bullet}\!\! \!\!\! \arrow{d} \quad   &  & \mathcal{O}_{\PP^2}(-5)\arrow{d}{C} &  \\  
       F_{\bullet} \!\! \!\!\! \quad   & \mathcal{O}_{\PP^2}(-4) \arrow{r}{B}  & \mathcal{O}_{\PP^2}(-3)^3   & . 
    \end{tikzcd}
\end{center} 
where the maps $B,C$ are defined by the linear and quadratic forms, respectively. If Resolution (\ref{R1}) fails to determine Resolution (\ref{R2}), which is a destabilizing sequence, then $Z$ is in the base locus of the nontrivial extremal chamber of $\EFF \ \! \PP^{2[7]}$. This exemplifies the notion that seems to relate syzygies to walls in general. We provide evidence of this claim below.

\medskip\noindent
Consider $M(\xi)\cong \PP^{2[n]}$, the Hilbert scheme of $n$ points on $\PP^2$. Let us focus on the values $n$, for which the nontrivial extremal divisor $J\in \EFF \ \! \PP^{2[n]}$ is determined by Betti numbers ---unlike $n=7$. They are $n=\tfrac{1}{2}d(d+1)$ or $n=2d(d+1)$, with $d>0$. Let us then describe the next wall. This is the first computation of the movable cone $\MOV(\PP^{2[n]})$ using syzygies.

\medskip\noindent
\begin{thm2}[Theorem \ref{theoremB}] 
A general\footnote{any $Z$ with a divisorial Betti table, see Section \ref{sec: mov}.}
sheaf $Z$ in $J\subset \PP^{2[n]}$  enters the base locus at 
the nontrivial extremal wall of the movable cone $\MOV(\PP^{2[n]})$. 
Also, it contains the minimal free resolution of a Bridgeland destabilizing object which induces such an extremal wall of $\MOV(\PP^{2[n]})$.
\end{thm2}

\medskip\noindent
We prove Theorem B using interpolation of vector bundles and syzygies, and exhibit that the Bridgeland destabilizing object arises. Here, the pattern dictated by Theorem A holds: a minimal free resolution, this time of the general element in $J$, is the mapping cone of Bridgeland destabilizing objects inducing the wall after the collapsing one. 

\medskip\noindent
If we want to prove that this pattern holds in general, then we must show that syzygies characterize walls of the SBLD, and verify they do so precisely when they are mapping cones that give rise to destabilizing sequences and walls.
To do this, we propose the characterization of walls in the SBLD using syzygies and interpolation as the \textit{base locus decomposition program}. We run this program for several cases (Sections 4, 5) and notice that the pattern described above holds: syzygies characterize walls of the SBLD precisely when they form mapping cones of Bridgeland destabilizing objects. This reveals the geometry of syzygies via wall-crossing; and also answers Question 1 completely.  

\begin{center} 
   \includegraphics[scale=.15]{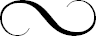} 
\end{center}

\subsection{Detailed results}
\noindent
We now describe our results in more detail and outline their proofs. We study our Question above by associating to a sheaf $U$, the (potential) wall in the SBLD where it enters the base locus. Let us describe how this works. The moduli space $M(\xi)$ is always assumed to have Picard rank 2, which is true for any $\xi$ above the Dr\'ezet-Le Potier curve \cite{DL}.

\medskip\noindent
\textbf{Interpolation problem for vector bundles \cite[p. 2]{CHW}:} Given a sheaf $U\in M(\xi)$, find a semi-stable bundle $V$ of minimal slope such that
\begin{enumerate}
\item[(a)] $\mu(V)\in \mathbb{Q}$,
\item[(b)] $\mu(\xi)+\mu(V)\geq 0$, and 
\item[(c)] $U\otimes V$ has no cohomology.
\end{enumerate}

\medskip\noindent
If the interpolation problem above is solved for a given $U\in M(\xi)$, then the Brill-Noether divisor $$D_V:=\{F\in M(\xi)\ | \ h^1(F\otimes V)\ne 0\} $$ is an effective divisor of $M(\xi)$ which does not have $U$ in its stable base locus. Thus, the class $D_V$ spans the ray $r_V\subset$ $\mathrm{NS}(M(\xi))_{\mathbb{Q}}\cong\mathbb{Q}^2$ at which $U$ potentially enters the base locus: the divisor classes on one side of $r_V$ do not contain $U$ in their stable base locus and, because of the minimality of $\mu(V)$, potentially do contain it on the other side of $r_V$. That makes $r_V$ a candidate for a wall of the SBLD.  

\medskip\noindent
Coskun et al. found the (primary) extremal wall of $\EFF(M(\xi))$ by solving the interpolation problem for a general $U\in M(\xi)$ \cite[\S 5]{CHW}. They did so by showing that $U$ admits a resolution in terms of very particular exceptional bundles; the so-called \textit{generalized Gaeta resolution} of $U$. Building on this, it suffices to exhibit how the minimal free resolution of $U$ determines the generalized Gaeta resolution. 

\medskip\noindent
Here is a brief description of the generalized Gaeta resolution. There is an exceptional bundle canonically associated to the moduli space $M(\xi)$: the controlling exceptional bundle $E_{(\alpha\cdot \beta)}$ (Definition \ref{Def:controlling}). Let $e$ denote its Chern character. 
Now, the Beilinson spectral sequence yields a short exact sequences of sheaves, the generalized Gaeta resolution, that depends on the sign of the relative Euler characteristic $\chi(\xi,e)$ as follows:
\begin{equation}\label{gaetatrianglepositive}
0\xrightarrow{\hspace{7mm}} E^k_{-\alpha-3}\xrightarrow{\hspace{7mm}} E^l_{-\beta}\oplus E^m_{-(\alpha\cdot\beta)}\xrightarrow{\hspace{7mm}} U\xrightarrow{\hspace{7mm}} 0 \quad \text{if }\chi(\xi,e)>0,
\end{equation}
\begin{equation}\label{gaetatrianglenegative}
      0 \xrightarrow{\hspace{7mm}}  E_{-(\alpha\cdot\beta)-3}^{m}\oplus E_{-\alpha-3}^k \xrightarrow{\hspace{7mm}} E^l_{-\beta} \xrightarrow{\hspace{7mm}} U \xrightarrow{\hspace{7mm}} 0  \quad \text{if }\chi(\xi,e)<0,\text{ and}
\end{equation}
\begin{equation}\label{exceptionalgaetatriangle}
   0\xrightarrow{\hspace{7mm}} E^k_{-\alpha-3}\xrightarrow{\hspace{7mm}} E^l_{-\beta} \xrightarrow{\hspace{7mm}} U\xrightarrow{\hspace{7mm}} 0  \quad \text{if }\chi(\xi,e)=0.
\end{equation}
Here, $k$, $l$ and $m$ are determined by the spectral sequence (see Sec. \ref{subsubsec: GaetaTriangle}). 

\medskip\noindent
On the other hand, the minimal free resolution of a sheaf $U\in M(\xi)$ can be viewed as a 2-term complex of line bundles $U_\bullet$ whose only non-zero cohomology is $U$.  To relate this complex to the generalized Gaeta resolution, we need to understand its subcomplexes using the following definition.

\begin{definition}\label{generalgaeta}
A 2-term complex  $U_{\bullet}$ of line bundles is called 
\begin{itemize}
\item $F$-admissible if it is the mapping cone of a map to $F_{\bullet}$, the minimal free resolution of a torsion-free sheaf $F$,  from some other complex,
\item $W$-residual if it is the mapping cone of a map from the dual of the minimal free resolution of the sheaf $W^*=\mathcal{H}om(W,\mathcal{O}_{\PP^2})$ to some other complex, 
\item a $(F,W)$-cone if it is the mapping cone of a map to the minimal free resolution of a sheaf $F$ from the dual of the minimal free resolution of the sheaf $W^*$.
\end{itemize}
\end{definition}

\medskip\noindent
In Section \ref{section2.4}, we elaborate on this definition, but for the time being let us comment that \textit{mapping cone} in our context carries the following significance. A sheaf $U$ is $F$-admissible if and only if its minimal free resolution 
\begin{equation*}
0\xrightarrow{\hspace{7mm}} L_1\overset{M}{\xrightarrow{\hspace{7mm}}} L_0\xrightarrow{\hspace{7mm}}U \xrightarrow{\hspace{7mm}} 0
\end{equation*}
satisfies that the map $M$ has a matrix representation, up to row and column reduction, as
\[M=\left(
\begin{array}{c|c}
0  & A \\ \hline
 B & C 
\end{array}\right),\]
where the submatrix $B$ is the map in the minimal free resolution of the sheaf $F$. Thus, $M$ induces a map $F\xrightarrow{\hspace{3mm}} U$. The complement of the subcomplex determined by $B$, denoted $W$, is called \textit{residual complex} and it exhibits $U$ as $W$-residual. 
Using the subindices from the previous page, let us restate Theorem A:

\begin{theorem}\label{MAINdetailed} 
Let $U\in M(\xi)$ be a sheaf and $E_{\alpha\cdot \beta}$ denote the controlling exceptional bundle of $\xi$. If $U_{\bullet}$ stands for the minimal free resolution of $U$, then:
\begin{enumerate}
\item[(a)] $U$ admits Resolution \eqref{gaetatrianglepositive} if and only if $U_{\bullet}$ is $E_{-(\alpha \cdot \beta)}^m$-admissible and the residual complex is a $(E_{-\beta}^l, E_{-\alpha - 3}^k)$-cone.

\medskip
\item[(b)] 
$U$ admits Resolution \eqref{gaetatrianglenegative} if and only if $U_{\bullet}$ is $E_{-\alpha.\beta-3}^m$-residual and the residual complex is an $(E_{-\beta}^{l},E_{-\alpha-3}^{k})$-cone.

\medskip
\item[(c)] $U$ admits Resolution \eqref{exceptionalgaetatriangle} if and only if $U_{\bullet}$ is a $(E_{-\beta}^{l},E_{-\alpha-3}^{k})$-cone.
\end{enumerate}

In all cases, the minimal free resolution of $U$ is the mapping cone of the map to the destabilizing object from (a shift of) the quotient that yield the collapsing Bridgeland wall.
\end{theorem}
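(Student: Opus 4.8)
The plan is to translate the sheaf-theoretic content of the generalized Gaeta resolutions \eqref{gaetatrianglepositive}--\eqref{exceptionalgaetatriangle} into the language of 2-term complexes of line bundles, using the fact that each exceptional bundle $E$ appearing (namely $E_{-\beta}$, $E_{-(\alpha\cdot\beta)}$, $E_{-\alpha-3}$, and their twists/powers) has a minimal free resolution that is itself a short 2-term complex of line bundles on $\PP^2$. So I would first fix, once and for all, the minimal free resolutions $E_{-\beta,\bullet}$, $E_{-(\alpha\cdot\beta),\bullet}$, $E_{-\alpha-3,\bullet}$, and record their ranks and the degrees of the line bundle summands; this is the routine bookkeeping that makes the matrix block decompositions precise. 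The key structural observation is that in each of the three cases the generalized Gaeta resolution expresses $U$ as (the cohomology of) a complex built from these exceptional resolutions, and since minimal free resolutions are unique up to isomorphism and behave well under taking cones of term-wise split injections, the minimal free resolution $U_\bullet$ is forced to be the total complex of the corresponding double complex — i.e., exactly the mapping cone asserted in Definition \ref{generalgaeta}.

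Concretely, for part (a): Resolution \eqref{gaetatrianglepositive} presents $U$ as the cokernel of $E^k_{-\alpha-3}\hookrightarrow E^l_{-\beta}\oplus E^m_{-(\alpha\cdot\beta)}$. I would replace each bundle by its minimal free resolution, obtaining a double complex of line bundles whose total complex has $U$ as its only cohomology; checking minimality (no constant entries survive) then shows this total complex \emph{is} $U_\bullet$. Reading off the blocks of the resulting presentation matrix $M$: the columns coming from $E^m_{-(\alpha\cdot\beta)}$ contribute the block $B$ resolving $F=E^m_{-(\alpha\cdot\beta)}$ (so $U_\bullet$ is $E^m_{-(\alpha\cdot\beta)}$-admissible), and the residual complex $W$ — what is left after removing that block — is precisely the total complex of the sub-double-complex built from the resolutions of $E^l_{-\beta}$ and the dualized resolution of $E^k_{-\alpha-3}$, which is the definition of a $(E^l_{-\beta},E^k_{-\alpha-3})$-cone. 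Parts (b) and (c) are the analogous unwindings: in (b) the term $E^m_{-(\alpha\cdot\beta)-3}\oplus E^k_{-\alpha-3}$ sits on the left, so after dualizing one sees $U_\bullet$ as $E^m_{-\alpha\cdot\beta-3}$-residual with the expected residual cone; in (c) there is no $E_{-(\alpha\cdot\beta)}$ term at all, so $U_\bullet$ is directly a $(E^l_{-\beta},E^k_{-\alpha-3})$-cone. For the converse directions one runs the same computation backwards: given the asserted cone structure, collapsing the subcomplexes back to their cohomology sheaves reconstructs the corresponding generalized Gaeta resolution.

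For the final sentence — that $U_\bullet$ is the mapping cone of the map to the destabilizing object from (a shift of) the quotient yielding the collapsing Bridgeland wall — I would invoke the identification, already used in the $\PP^{2[7]}$ example, between the Beilinson/generalized Gaeta short exact sequence and the destabilizing sequence at the collapsing wall: the controlling exceptional bundle $E_{\alpha\cdot\beta}$ (or rather the relevant twist of its ``partner'' in the Gaeta triangle) is exactly the Bridgeland destabilizing object of a general $U$, and the other term of the triangle is the quotient (up to shift) that produces the collapsing wall in the stability manifold. Since we have just shown $U_\bullet$ is the total complex of the double complex assembling the minimal free resolutions of these two objects, it is by construction their mapping cone. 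So this clause is not an extra theorem but a repackaging of the block decomposition already established, combined with the wall-crossing dictionary from \cite{LZ,ABCH}.

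The main obstacle I anticipate is the ``up to row and column reduction'' caveat: the double-complex total complex one writes down naively need not be minimal, because the differentials in the Gaeta resolution, when lifted to line bundles, can have constant (degree-zero) entries coming from overlaps in the degree ranges of the various exceptional resolutions. Handling this requires a careful case analysis showing that for the controlling exceptional bundle of $\xi$ these potential cancellations either do not occur or, when they do, can be absorbed without destroying the block structure $\left(\begin{smallmatrix}0 & A\\ B & C\end{smallmatrix}\right)$ — i.e., that $B$ remains the minimal resolution of $F$ after reduction. This is where the hypothesis that $M(\xi)$ has Picard rank $2$, equivalently that $\xi$ lies above the Dr\'ezet--Le Potier curve, and the precise numerics of $(\alpha\cdot\beta)$ versus $\beta$ and $\alpha+3$, will have to be used; I expect the bulk of the technical work of the proof lives exactly here, and it is cleanest to organize it as a lemma on minimality of total complexes of exceptional double complexes before deducing (a)--(c).
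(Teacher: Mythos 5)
Your overall architecture is the paper's: replace the terms of the generalized Gaeta resolution by 2-term complexes of line bundles (resolving $F$, coresolving $W$ via the dual of the resolution of $W^*$), form the mapping cone, and check it is the minimal free resolution; the converse by taking cohomology of the triangle. You also correctly flag that the degree bookkeeping of the line bundles is where the technical content lives.

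However, there is a genuine gap at the central step. You write that one ``obtains a double complex of line bundles whose total complex has $U$ as its only cohomology,'' but to form that double complex you need the sheaf map $f\colon W\to F$ (or $t$ inside $W_\bullet$ in case (a)) to lift to an honest morphism of the chosen complexes. This is \emph{not} the standard comparison theorem for projective resolutions: $F$ is presented as a cokernel in degrees $-1,0$, while $W$ is presented as a kernel in degrees $0,1$ (if you instead used a projective resolution of $W$, the total complex would have three terms and could not be the minimal free resolution). A morphism in $D^b(\PP^2)$ between such complexes is represented by a chain map only under vanishing hypotheses — this is exactly Lemma \ref{derivedHom}, whose hypotheses $\mathrm{ext}^i(W_k,F_\ell)=0$ and $\mathrm{hom}(W_1,F_{-1})=0$ must be verified from the explicit Beilinson computation showing all summands are line bundles in the three consecutive degrees $-d-2,-d-1,-d$. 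Your proposal never addresses this lifting, and it is the step that actually consumes the numerics of $E_{\alpha\cdot\beta}$. By contrast, the obstacle you single out — minimality of the resulting cone — falls out of the same degree computation almost for free (the generators and relations of the cone share no degree), so the emphasis is inverted: the ``careful case analysis'' you defer to a lemma on minimality is really needed one step earlier, to produce the chain map at all. In case (a) this must moreover be done twice (first to realize the two-term complex $W_\bullet$ of exceptional bundles as a cone of line-bundle complexes, then to lift the map to $F$), an iteration your write-up does not separate out.
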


\medskip\noindent
The strategy to prove Theorem \ref{MAINdetailed} is straightforward. Suppose $U$ fits into a generalized Gaeta resolution, e.g., in Case (c),
\[0\xrightarrow{\hspace{7mm}} W \overset{f}{\xrightarrow{\hspace{7mm}}}  F\xrightarrow{\hspace{7mm}} U\xrightarrow{\hspace{7mm}} 0.\] 
In $D^b(\PP^2)$, we may replace $W$ and $F$ by equivalent complexes of line bundles induced by minimal free resolutions. Then, $f$ lifts to a map of such complexes and its mapping cone is a complex which only involves line bundles. We then argue that this mapping cone is the minimal free resolution of $U$. As a consequence of this theorem we have the following. 

\begin{coro}
If a general $U\in M(\xi)$ fits into Resolution \eqref{exceptionalgaetatriangle}, then, the closure in $M(\xi)$ of the locus of sheaves whose minimal free resolution fail to be $(E_{-\beta}^l, E_{-\alpha - 3}^k)$-cones forms an irreducible and reduced divisor. This divisor is extremal in $\EFF(M(\xi))$, and it is the stable base locus of an extremal chamber of $\mathrm{Eff}(M(\xi))$.
\end{coro}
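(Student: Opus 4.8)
The strategy is to recognize the locus in the statement as the Brill--Noether divisor attached by interpolation to the controlling bundle, and then to upgrade the set-theoretic description coming from Theorem~\ref{MAINdetailed}(c) to a scheme-theoretic one. Write $e=\operatorname{ch}(E_{\alpha\cdot\beta})$ and let $V$ be the exceptional bundle (the relevant twist of $E_{\alpha\cdot\beta}^{\vee}$) normalized so that $\chi(\xi,e)=0$ becomes $\chi(U\otimes V)=0$. Reading off the Beilinson spectral sequence that produces~\eqref{exceptionalgaetatriangle}, a semistable $U\in M(\xi)$ admits Resolution~\eqref{exceptionalgaetatriangle} if and only if $U\otimes V$ has no cohomology; since $\chi(U\otimes V)=0$ and $H^{2}(F\otimes V)=0$ for $F\in M(\xi)$ on the slope range in play (part of the interpolation setup of \cite{CHW}), this is equivalent to $h^{1}(U\otimes V)=0$. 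By Theorem~\ref{MAINdetailed}(c) the same vanishing is equivalent to $U_{\bullet}$ being a $(E_{-\beta}^{l},E_{-\alpha-3}^{k})$-cone. Hence the locus $\mathcal Z$ in the statement agrees, as a set, with the closed set $D_V=\{F\in M(\xi)\mid h^{1}(F\otimes V)\neq 0\}$; it is neither empty nor all of $M(\xi)$, because the general $U$ admits~\eqref{exceptionalgaetatriangle} while $D_V\neq 0$ by the minimality of $\mu(V)$ in \cite[\S 5]{CHW}.

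\smallskip
\noindent The next step is to equip $D_V$ with its determinantal scheme structure. On an \'etale cover of $M(\xi)$ carrying a universal sheaf $\mathcal U$ (or on the moduli stack), $Rp_{*}(\mathcal U\otimes V)$ is represented by a two-term complex of locally free sheaves $[\mathcal A\xrightarrow{\phi}\mathcal B]$, and $\chi(U\otimes V)=0$ allows us to take $\operatorname{rk}\mathcal A=\operatorname{rk}\mathcal B$. Then $D_V$ is the zero scheme of $\det\phi$, a nonzero effective Cartier divisor of class $c_{1}(\mathcal B)-c_{1}(\mathcal A)$; in particular it is pure of codimension one. This is exactly the class computed in \cite{CHW} to span the primary extremal ray of $\EFF(M(\xi))$, which proves the extremality assertion.

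\smallskip
\noindent For reducedness and irreducibility the key input is that at a general point $U_{0}\in D_V$ one has $h^{1}(U_{0}\otimes V)=1$. One obtains this by identifying the minimal free resolution of a general member of $D_V$ --- by Theorem~\ref{MAINdetailed} it is the ``next-to-generic'' one, failing minimally to be a $(E_{-\beta}^{l},E_{-\alpha-3}^{k})$-cone --- and reading off its Betti numbers, or equivalently by producing a one-parameter family through $U_{0}$ along which $h^{1}(\,\cdot\,\otimes V)$ drops to $0$. Thus $\phi$ has corank one at a general point of $D_V$, so $\det\phi$ vanishes simply along $D_V$, and an effective Cartier divisor that is generically reduced is reduced. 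Irreducibility then follows from the structure of minimal free resolutions along $D_V$ (Theorem~\ref{MAINdetailed}): the codimension-one degeneration is unique and its locus is an irreducible family of syzygy matrices modulo change of basis that dominates $D_V$, whose underlying reduced subscheme is the closure of this family; alternatively one may cite the irreducibility of these Brill--Noether divisors from \cite{CHW,Hui14}.

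\smallskip
\noindent Finally, to identify $\mathcal Z=D_V$ with the stable base locus of an extremal chamber of $\EFF(M(\xi))$, I would use the last clause of Theorem~\ref{MAINdetailed}: whenever $U$ admits~\eqref{exceptionalgaetatriangle}, its minimal free resolution is the mapping cone realizing the Bridgeland destabilizing sequence underlying~\eqref{exceptionalgaetatriangle} --- the one inducing the collapsing wall --- so such $U$ is destabilized there and hence lies off the base locus of the extremal chamber bordering the ray spanned by $D_V$, while sheaves in $D_V$ do not and therefore lie in it. Combined with the identification of SBLD walls with Bridgeland walls from \cite{LZ} and the interpolation description of the extremal divisor in \cite{CHW}, this shows the stable base locus of that chamber equals $D_V$. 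The step I expect to be the main obstacle is reducedness: reconciling the linear-algebraic locus of Theorem~\ref{MAINdetailed}(c) (``the syzygy matrix does not degenerate into the prescribed block shape'') with the determinantal scheme structure on $D_V$ and verifying that no extra nilpotents are introduced, which hinges entirely on the minimality of the cohomology jump at a general point of $D_V$; the remaining steps are bookkeeping with Theorem~\ref{MAINdetailed}, the interpolation computation of \cite{CHW}, and the wall correspondence of \cite{LZ}.
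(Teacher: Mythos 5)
Your proposal is correct in substance and its core identification is the same as the paper's: use Theorem~\ref{MAINdetailed}(c) to identify the non-cone locus with the Brill--Noether divisor $D_{E_{\alpha\cdot\beta}}$ attached to the controlling exceptional bundle, and then import the positivity statements from \cite{CHW}. Where you diverge is in how much you try to prove from scratch and in the mechanism for the final clause. For irreducibility and reducedness the paper simply cites \cite[Theorem 7.3]{CHW}; your determinantal corank-one argument is a reasonable sketch of how that result is proved, but it is not needed here and is the step you yourself flag as delicate --- the citation route you offer as an alternative is the intended one. For the claim that $\mathcal{U}^c$ \emph{is} the stable base locus of the extremal chamber, the paper argues via \cite[Theorem 6.4]{CHW}: the Kronecker fibration $M(\xi)\dashrightarrow Kr$ is birational onto a Picard-rank-one target, hence contracts a divisor, and that contracted divisor must be the stable base locus of the primary chamber; since the stable base locus is contained in the irreducible extremal divisor $\mathcal{U}^c$, the two coincide. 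Your version of this step --- ``sheaves in $D_V$ are not destabilized by the collapsing-wall object and therefore lie in the base locus'' --- is not a complete argument as stated: membership in $D_V$ does not by itself force membership in the stable base locus (indeed the paper points out that the analogous inclusion in part (a) of the corollary is strict), and one needs either the contraction argument or the full wall correspondence of \cite[Theorem B]{LZ}, which you do cite. So the proposal stands, but the reverse inclusion $\mathbf{B}\supseteq D_V$ should be run through the Kronecker contraction (or an explicit dual moving curve sweeping out $D_V$) rather than through the destabilization heuristic.
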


\medskip\noindent
We now investigate how the syzygies determine inner walls of the SBLD of $\EFF(M(\xi))$. 
Let $J\subset \PP^{2[n]}$ be the nontrivial extremal effective divisor.

\begin{theorem}\label{theoremB}
Solving the interpolation problem for a general $Z\in J$, we deduce that the movable cone of $\PP^{2[n]}$ is
\begin{align*}
    (a)\ \MOV(\PP^{2[n]}) \ &=\ \langle H,\tfrac{d^2 - 2 d + 2}{d - 1}H-\tfrac{1}{2}B\rangle\ \text{ if }\quad n=\tfrac{d(d+1)}{2} \text{ and}\\[2mm]
    (b)\ \MOV(\PP^{2[n]})\ &=\ \langle H, \tfrac{8 d^2 - 4 d + 1}{4 d - 1}H-\tfrac{1}{2}B\rangle\text{ if }\quad 
    n=2d(d+1).
\end{align*}
Moreover, the general $Z\in J$ is $F$-admissible, where $F$ is a destabilizing object that determines the nontrivial extremal ray of the movable cone $\MOV(\PP^{2[n]})$.
\end{theorem}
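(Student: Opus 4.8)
The plan is to run, for each of the two families $n=\tfrac{d(d+1)}{2}$ and $n=2d(d+1)$ separately, the four-step scheme of the base locus decomposition program: (i) determine the minimal free resolution of a general $Z\in J$, its \emph{divisorial Betti table}; (ii) read off from the block form $M=\left(\begin{smallmatrix}0&A\\B&C\end{smallmatrix}\right)$ the destabilizing sheaf $F=\operatorname{coker}(B)$ and the resulting $F$-admissibility of $Z$; (iii) solve the interpolation problem of \cite{CHW} for a general $Z\in J$, obtaining a bundle $V$ and a ray $r_V$; (iv) convert $r_V$ into a class in $N^1(\PP^{2[n]})_{\Q}=\Q H\oplus\Q B$ and identify it with the extremal edge of $\MOV(\PP^{2[n]})$.

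For step (i): for these two values of $n$ the primary generalized Gaeta resolution of a general $Z\in\PP^{2[n]}$ is $0\to\OO(-d-1)^{d}\to\OO(-d)^{d+1}\to\mathcal I_Z\to0$ in case (a) and $0\to\OO(-2d-2)^{d}\to\OO(-2d)^{d+1}\to\mathcal I_Z\to0$ in case (b), with no twist shared between the two terms; hence $J$ is exactly the determinantal divisor on which the corresponding square matrix (of monomial evaluations in case (a); of the multiplication $H^0(\mathcal I_Z(2d))\otimes H^0(\OO_{\PP^2}(1))\to H^0(\mathcal I_Z(2d+1))$ in case (b)) drops rank, and the general $Z\in J$ keeps the generic Hilbert function but acquires one extra $(\text{generator},\text{first syzygy})$ pair. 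I would pin these down by a Hilbert-function computation followed by a $\operatorname{rk}$/$c_1$/$\chi$ check. In case (a) the extra generator sits in degree $d-1$, forcing $Z$ onto a unique general plane curve $C$ of degree $d-1$ --- the geometric meaning of $J$ --- and the cohomology of $0\to\OO(-d+1)\to\mathcal I_Z\to\mathcal I_{Z/C}\to0$ should give
\[0\to\OO(-d-1)^{d-3}\oplus\OO(-d-2)\longrightarrow\OO(-d+1)\oplus\OO(-d)^{d-2}\longrightarrow\mathcal I_Z\to0\qquad(d\ge3),\]
with a minor variant for $d=2$. In case (b) the extra pair sits in degree $2d+1$ (a linear syzygy among the $d+1$ generators of degree $2d$, present precisely on the codimension-one degeneracy locus above), giving
\[0\to\OO(-2d-2)^{d}\oplus\OO(-2d-1)\longrightarrow\OO(-2d)^{d+1}\oplus\OO(-2d-1)\longrightarrow\mathcal I_Z\to0\]
with the $\OO(-2d-1)\to\OO(-2d-1)$ block zero. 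This step, and in particular isolating which minimal jump actually occurs on $J$ in case (b), is where I expect most of the difficulty to lie.

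Step (ii) is then formal. In case (a) the relevant sub-block $B$ is the $(d-2)\times(d-2)$ matrix $\OO(-d-1)^{d-3}\oplus\OO(-d-2)\to\OO(-d)^{d-2}$, whose determinant is the equation of $C$, so $F=\operatorname{coker}(B)$ is a torsion sheaf on $C$ (a twist of $\mathcal I_{Z/C}$); in case (b), $B$ is the column $\OO(-2d-1)\to\OO(-2d)^{d+1}$ of linear forms, so $F=\operatorname{coker}(B)\cong\mathcal T_{\PP^2}(-2d-1)\oplus\OO(-2d)^{d-2}$ (rank one for $d=1$). By Definition \ref{generalgaeta} this exhibits $Z$ as $F$-admissible, with the resolution of $\mathcal I_Z$ the mapping cone of a map to $F_\bullet$, i.e.\ a shift of a Bridgeland destabilizing sequence, as in Theorem \ref{MAINdetailed}. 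For step (iii) I would tensor this resolution by a test semistable bundle $V$ and reduce $H^\ast(\mathcal I_Z\otimes V)=0$ to the vanishing of $H^\ast(V(-a))$ for the finitely many twists $a$ occurring; the Dr\'ezet-Le Potier classification determines exactly which slopes $\mu(V)$ admit such a $V$, and the minimal such $\mu(V)$ with $\mu(\xi)+\mu(V)\ge0$ produces the desired bundle. The crucial point is that the extra structure of $Z\in J$ (lying on $C$, resp.\ carrying the extra linear syzygy) is exactly what lets $\mu(V)$ drop below the value forced by the generic resolution; verifying that no smaller slope works --- the minimality demanded by the interpolation problem --- is the other delicate point.

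Finally, for step (iv), the ray spanned by $D_V=\{F'\in\PP^{2[n]}:h^1(F'\otimes V)\ne0\}$ is converted to a class in $\Q H\oplus\Q B$ by the standard formula for Brill-Noether divisor classes on $\PP^{2[n]}$ (see \cite{CHW}): the $B$-coefficient is $-\tfrac12\operatorname{rk}(V)$ and the $H$-coefficient is fixed by $\mu(V)$ together with the normalization $\chi(\mathcal I_Z\otimes V)=0$; substituting $\operatorname{ch}(V)$ and rescaling the generator to have $B$-coefficient $-\tfrac12$ yields the coefficients $\tfrac{d^2-2d+2}{d-1}$ resp.\ $\tfrac{8d^2-4d+1}{4d-1}$. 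Since $H$ is the Hilbert-Chow edge it is automatically the other generator of $\MOV$. To see $r_V=\partial\,\MOV$, rather than merely an interior wall of the SBLD, I would pair two facts: on the $H$-side of $r_V$ the divisors $D_{V'}$ of step (iii) do not contain the general $Z\in J$ in their stable base locus, so by irreducibility of $J$ there is no divisorial base locus there; on the other side the destabilizing sequence of step (ii), via \cite{LZ}, forces the general point of the divisor $J$ into a divisorial base locus. This yields $\MOV(\PP^{2[n]})$ as claimed and shows that $F$ is a Bridgeland destabilizing object inducing its extremal wall. The main obstacle, as flagged, is step (i) --- getting the two divisorial Betti tables exactly right --- together with the minimality assertion in step (iii).
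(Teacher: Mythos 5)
Your steps (i), (ii) and (iv) track the paper's architecture reasonably well (the two divisorial Betti tables you write down are exactly those of Remark \ref{DivGaeta}), but step (iii) — the cohomological orthogonality — is where the real content of the proof lives, and your proposed method for it cannot work. Tensoring the divisorial resolution by $V$ and asking for termwise vanishing $H^*(V(-a))=0$ requires vanishing at the four consecutive twists $a=d-2,d-1,d,d+1$ (resp. their analogues in the tangential case); since $\chi(V(-a))$ is a quadratic polynomial in $a$ with leading coefficient $\tfrac{1}{2}\mathrm{rk}(V)\neq 0$, it has at most two roots, so no semistable $V$ is cohomologically orthogonal to four consecutive line bundle twists. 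The vanishing of $H^*(\mathcal{I}_Z\otimes V)$ must therefore come from exactness of the maps in the tensored complex, and the Dr\'ezet--Le Potier classification says nothing about that. The paper's actual arguments are substantially different: in the triangular case one restricts the candidate bundle $M$ (a Steiner bundle $\OO_{\PP^2}(d-3)^{kd}\to\OO_{\PP^2}(d-2)^{k(2d-1)}\to M$) to a general rational curve of degree $d-1$, applies Huizenga's restriction theorem to compute $f^*M$ as a balanced bundle on $\PP^1$, and realizes $Z$ as the nodes plus $3d-3$ general points of that curve; in the tangential case one degenerates to the $k=2$ resolution \eqref{SPECIAL2}, extracts from the residual block a rank-3 stable bundle $V$, and invokes the Coskun--Huizenga--Kopper theorem on cohomology of general tensor products, with a case analysis on $d$ mod $3$ (plus Macaulay2 for $d<6$). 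None of this is recoverable from your outline, and the minimality of $\mu(V)$ — which you also flag but do not address — is proved in the paper by exhibiting explicit dual curve classes: trivially a pencil on the degree $d-1$ curve in case (a), and in case (b) the genuinely delicate Proposition \ref{Prop:: InterpoTang}, a pencil of $Z$'s on a degree $4d-1$ curve nodal along the residual scheme of the foliation, which needs Lemma \ref{lem: resolution zeroes tangent}. Your alternative of citing \cite{LZ} for extremality is logically available but defeats the paper's stated aim of computing $\MOV$ without Bridgeland input, and in any case does not repair the missing orthogonality argument.

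A secondary but genuine error: in case (a) you identify the destabilizing object $F$ as the cokernel of the square block $\OO(-d-1)^{d-3}\oplus\OO(-d-2)\to\OO(-d)^{d-2}$, a torsion sheaf on $C$. That sheaf is the destabilizing \emph{quotient}; it is not torsion-free, so it cannot serve as the $F$ of Definition \ref{generalgaeta}. The destabilizing subobject inducing the wall is $\OO_{\PP^2}(-d+1)$ itself, which appears as a direct summand of the generators (this is why the paper calls the triangular case of Proposition \ref{prop::destObjMov} trivial). In case (b) the contained subcomplex must be row-reduced to $\OO(-2d-1)\to\OO(-2d)^3$ so that $F=\mathcal{T}_{\PP^2}(-2d-1)$ exactly; your $F=\mathcal{T}_{\PP^2}(-2d-1)\oplus\OO(-2d)^{d-2}$ has a different Chern character and is not semistable of a single slope, so it does not determine the wall.
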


\medskip\noindent
We prove this theorem by writing the minimal free resolution of a general element $Z\in J$, using the previous corollary and solving the interpolation problem for it. Then, Theorem \ref{theoremB} simply observes that $(\mathcal{I}_Z)_{\bullet}$ is $F$-admissible, where $F$ is the Bridgeland destabilizing object giving rise to the extremal wall of $\MOV(\PP^{2[n]})$. Solving the interpolation problem in this case is difficult and has not been done before using these techniques. But despite the difficulty, observe that this process turns Theorem \ref{theoremB} into a particular case of the following:

\medskip\noindent 
\textbf{Base locus decomposition program:} In order to understand how syzygies determine walls of the stable base locus decomposition (SBLD) of $\EFF(M(\xi))$, we proceed as follows. Set $M = M(\xi)$. 
Then,
\begin{enumerate}
\item Solve the interpolation problem for a generic point in $M$ using the minimal free resolution. The outcome is a bundle $V$.
\item The Brill-Noether divisor $D_V$ spans a potential wall in the SBLD. 
\item The divisor $D_V$ has base locus of its own, so we may repeat the process from Step (1), replacing $M$ with the stable base locus of $D_V$.
\end{enumerate}

\medskip\noindent
If Step (2) yields a wall, then this program would answer Question 1 as long as we reach \textit{all} the walls in this way. 
If $M(\xi)=\PP^{2[n]}$, then this program computes the entire SBLD of $\EFF(\PP^{2[n]})$ for small values of $n$ (Section \ref{section5}), and it exhibits how syzygies characterize walls of the SBLD. We verify they do so precisely when they are mapping cones of Bridgeland semi-stable objects that give rise to walls. This suggests the following syzygy behavior:
\begin{conj}\label{conj}
A sheaf $U$ is $F$-admissible, where $F$ is a Bridgeland destabilizing object that yields the wall $W$ when $U$ enters the stable base locus at the wall $W$. \end{conj}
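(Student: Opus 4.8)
The plan is to prove the conjecture one wall at a time, feeding the output of the base locus decomposition program into the mapping-cone argument already used for Theorem~A. Fix a wall $W$ of the SBLD of $M(\xi)$ and let $U$ be a general sheaf that enters the stable base locus at $W$. By \cite{LZ} the wall $W$ corresponds to a Bridgeland wall in $\mathrm{Stab}(\PP^2)$, and on that Bridgeland wall a general $U$ fits into a short exact sequence in the relevant heart
\[
0\xrightarrow{\hspace{4mm}} A \xrightarrow{\hspace{4mm}} U \xrightarrow{\hspace{4mm}} Q \xrightarrow{\hspace{4mm}} 0,
\]
with $A$ and $Q$ Bridgeland semistable of the same phase as $U$; one of them, call it $F$, is the destabilizing object whose Brill--Noether-type divisor $D_F$ cuts out $W$ --- this is exactly the object that Step~(1) of the program produces via interpolation. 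The first task is therefore to read $F$, and its residual partner $W$, off the minimal free resolution $U_\bullet$, generalizing what Theorem~\ref{MAINdetailed} does for the collapsing wall.

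The next step is to resolve $F$ and $W^{*}=\mathcal{H}om(W,\OO_{\PP^2})$ by line bundles: every object of $D^{b}(\PP^2)$ is quasi-isomorphic to a finite minimal complex of sums of $\OO_{\PP^2}(a)$, so replace $F$ by $F_\bullet$ and $W^{*}$ by its minimal free resolution, then dualize the latter. The destabilizing map lifts to a map of such complexes of line bundles, and its mapping cone is quasi-isomorphic to $U$; by construction this free resolution of $U$ is $F$-admissible, respectively $W$-residual, respectively an $(F,W)$-cone in the sense of Definition~\ref{generalgaeta}. The crux is then \emph{minimality}: one must show this mapping cone carries no unit entry, so that it coincides with $U_\bullet$ and the admissibility statement transfers to the minimal free resolution. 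Here stability enters quantitatively: the slopes and discriminants of $A$ and $Q$ are rigidly pinned by the numerical data of $W$ (the controlling exceptional bundle and its neighbours in the Dr\'ezet--Le Potier picture), so the twists occurring in $F_\bullet$ and in the dual of $(W^{*})_\bullet$ are confined to disjoint ranges and the relevant $\mathrm{Hom}(\OO(a),\OO(b))=0$ vanishings forbid cancellation --- exactly as exceptional-bundle $\mathrm{Ext}$-vanishing does in the proofs of Theorems~\ref{MAINdetailed} and~\ref{theoremB}. When $F$ is a genuine complex rather than a sheaf, one replaces that vanishing by a Castelnuovo--Mumford regularity bound on the cohomology sheaves of $F$, whose slopes are again fixed by $W$.

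For the converse --- that an $F$-admissible $U$ with $F$ the destabilizing object of $W$ really enters the base locus \emph{at} $W$ and not before --- I would run the Beilinson/generalized Gaeta machinery backwards as in the proof of Theorem~\ref{MAINdetailed}: $F$-admissibility of $U_\bullet$ reconstructs the destabilizing triangle, hence $D_F$ is effective with $U$ outside its stable base locus, so $U$ survives up to the ray $r_F=W$, while the minimality of $\mu(V)$ in the interpolation problem gives that $U$ lies in the base locus just past $r_F$. The main obstacle, and the reason this stays a conjecture, is the combination of the first task with the inner-wall case of minimality: there is as yet no classification, valid for arbitrary $\xi$, of which Bridgeland object destabilizes along each wall, nor uniform control of the twists in its minimal free resolution --- the objects need not be sheaves, need not be exceptional, and the wall structure is fractal. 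I therefore expect the conjecture to be provable only after such a structural classification; the present paper establishes exactly the program above, wall by wall, for $M(\xi)=\PP^{2[n]}$ with small $n$ (Section~\ref{section5}), which is the evidence on offer.
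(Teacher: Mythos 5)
This statement is a conjecture that the paper itself does not prove: its only ``proof'' is the case-by-case verification for $\PP^{2[n]}$, $n=2,\dots,8,12$, carried out in Section~\ref{section5} via the base locus decomposition program. Your proposed strategy --- lift the destabilizing map to a map of line-bundle complexes, take the mapping cone, establish minimality from the confinement of the twists, and use interpolation for the converse --- is exactly the paper's own program (the method of Theorem~\ref{MAINdetailed} iterated wall by wall), and the obstruction you identify (no general classification of the destabilizing objects at inner walls, nor uniform control of minimality) is precisely why the statement remains conjectural.
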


\medskip\noindent
\textbf{Relation to existing literature:} The stable base locus decomposition SBLD of $M(\xi)$ is determined by Bridgeland stability conditions \cite[Theorem B]{LZ}; but how they do so in terms of the minimal free resolutions is new. 
Here, thinking of the minimal free resolution of $U$ as a mapping cone clarifies the nature of the heart of the t-structure of the torsion pair $\mathcal{A}_s$ used in Bridgeland stability, as explained in \cite{ABCH}: $U$ is $F$-admissible for a torsion-free sheaf $F$ and the residual complex $W$ (which is given by some of the syzygies) often yields a sheaf. This sheaf has a clear geometric meaning in all the cases of this paper (e.g., see Remark \ref{folations} for a relation to the Campillo-Olivares problem on foliations \cite{CO,CO1,O}, or Remark \ref{residual1}). Further, the \textit{Positivity Lemma} by Bayer and Macr\`i \cite[Lemma 3.3]{BM} exhibits a nef divisor, and its dual curve, in the Bridgeland moduli space along a wall. Proposition \ref{Prop:: InterpoTang} gives a previously unknown description of this curve in classical terms which involves, in an unexpected way, the residual complex $W$.

\medskip\noindent
Our proposed stable base locus decomposition program follows closely the computation of walls in \cite{CHW, Hui14, CH13}. Our contribution is to propose a general and systematic way of computing the SBLD using those techniques and syzygies. For example, this recovers the content of the examples in \cite[\S 9]{ABCH} with different methods. 

\section*{organization of the paper} \noindent
Section \ref{preliminaries} contains preliminaries and the notation of the objects in birational geometry we aim to study using free resolutions.
In Section \ref{sec: eff}, we prove Theorem \ref{MAINdetailed}. 
In Section \ref{sec: mov}, we prove Theorem \ref{theoremB} which involves two cases: triangular and tangential numbers. 
In Section \ref{section5}, we show that we can fully answer our Question above for $\mathbb{P}^{2[n]}$ with $n=2,3,4,5,6,12$.

\section*{acknowledgments}
\noindent 
We thank Izzet Coskun, Benjamin Schmidt, Jorge Olivares and Claudia Reynoso for useful conversations. This work has been partly supported by CONAHCYT, grant ``Estancias sab\'aticas en el extranjero 2023'' No. I1200/311/2023. CLH thanks the Department of Mathematics at Harvard for its warm hospitality during his sabbatical visit; in particular, Joe Harris, Anand Patel and Mihnea Popa for their encouragement and generosity in sharing their ideas. 
CLH is currently a CONAHCYT Research Fellow in Mathematics, project No. 1036.
TR was partially supported by Karen Smith's Keeler and Fulton professorships.

\bigskip
\section{Preliminaries}\label{preliminaries}
\noindent
This section recalls the notions about sheaves on $\PP^2$ that are used throughout the paper.
We work over $\mathbb{C}$, and all sheaves are coherent.

\subsection{The logarithmic Chern character}

\noindent
Let $U$ and $V$ be  coherent sheaves on $\PP^2$ with $U$ having Chern character $(r,\mathrm{ch}_1,\mathrm{ch}_2)$ and $r>0$.
The \textit{slope} and \textit{discriminant} of $U$  are defined to be $\mu(U)=\frac{\mathrm{ch}_1}{r} \;\text{ and }\; \Delta(U) =\frac{1}{2}\mu(U)^2-\frac{\mathrm{ch}_2}{r}$, respectively.
The slope and discriminant are logarithmic Chern classes in the sense that \[\mu(U\otimes V) = \mu(U) +\mu(V) \qquad \text{ and }\qquad \; \Delta(U\otimes V) = \Delta(U) +\Delta(V).\] For the dual bundle $U^*$, we have $\mathrm{rk}(U^*) =\mathrm{rk}(U)$, $\mu(U^*) = -\mu(U)$, and $\Delta(U^*) = \Delta(U)$.
Since the Chern character $(r,\mathrm{ch}_1,\mathrm{ch}_2)$ determines the slope and discriminant, throughout this paper we will refer to $\mathrm{ch}(U)=(r,\mu,\Delta)$ as the \textit{log Chern character}, or simply the character, of a sheaf $U$ of positive rank.
These definitions extend to complexes by taking alternating sums of the Chern character.

\medskip\noindent
A coherent, torsion-free sheaf is called \textit{slope (semi-)stable} if all proper nontrivial subsheaves have smaller (or equal) slope. 

\subsection{Orthogonality notions}
\label{subsec: euler}
\medskip\noindent
By the Riemann-Roch formula, the \textit{Euler characteristic} of $U$ is 
\[ \chi(U) = \sum_{i=0}^2(-1)^i h^i\left(\mathbb{P}^2,U\right) = \mathrm{rk}(U)\left(\frac{1}{2}\left(\mu(U)+1\right)\left(\mu(U)+2\right)-\Delta(U)\right).\]

\medskip\noindent
Similarly, the \textit{relative Euler characteristic} of two sheaves $U$ and $V$ is 
\begin{align}\label{CHI} 
\begin{split}
\chi(U,V) &= \sum_{i=0}^2(-1)^i \mathrm{ext}^i\left(U,V\right) \\         &=\mathrm{rk}(U)\mathrm{rk}(V)\left(\frac{1}{2}\left(\mu(V)-\mu(U)+1\right)\left(\mu(V)-\mu(U)+2\right)-\Delta(U)-\Delta(V)\right).
\end{split}
\end{align}

\medskip\noindent
Since the Euler characteristic $\chi(U)$ depends only on the character of $U$, it induces a bilinear pairing $$(\xi,\zeta):=\chi(\xi^*,\zeta) = \chi(\xi \otimes \zeta)$$ in the Grothendieck group $K_0(\PP^2)\otimes \mathbb{R}\cong \mathbb{R}^3$; in particular, it can be applied to complexes.  

\medskip\noindent
Denote by $\zeta^\perp$ the set of log Chern characters $\xi$ such that $\chi(\xi\otimes \zeta)=0$. Given a fixed slope and discriminant, the rank does not affect this equality so $\zeta^\perp$ defines a parabola in the $(\mu,\Delta)$-plane. Hence, we sometimes abuse notation and  write $\chi(\xi\otimes \zeta)=0$, or $\chi(\xi^*, \zeta)=0$, when $\xi$ is only a point in the $(\mu,\Delta)$-plane.

\begin{definition}\label{orthogonal}
Two coherent sheaves $U$ and $V$ are called \textit{numerically orthogonal} if $\chi(U,V)=0$ and \textit{cohomologically orthogonal} if $h^i(\mathbb{P}^2,U\otimes V)=0$ for $i=0,1,2$.
\end{definition}

\medskip\noindent
Numerical orthogonality only depends on Chern characters and is easy to check. The following observation will be used in showing cohomological orthogonality in Sections 4 and 5: If $U$ fits in a short exact sequence (or a similar triangle in $D^b(\PP^2)$)\[0\xrightarrow{\hspace{7mm}} W\xrightarrow{\hspace{7mm}} F\xrightarrow{\hspace{7mm}} U\xrightarrow{\hspace{7mm}} 0,\]
where $rk(W)>0$ and $\mu(W)\neq\mu(F)$, then up to choosing $r$, there is a unique character $\zeta = (r,\mu,\Delta)$ which is numerically orthogonal to $F$ and $W$. That is, $(\zeta,\mathrm{ch}(W))=(\zeta,\mathrm{ch}(F))=0$ has a unique solution, which is
\begin{align*}
    \mu&=\frac{\Delta(F)-\Delta(W)}{\mu(F)-\mu(W)}-\frac{\mu(W)+\mu(F)+3}{2} \text{ and}\\[2mm]
    \Delta &= \frac{(\Delta(F)-\Delta(W))^2}{(2 (\mu(F) - \mu(W))^2} +\frac{(\mu(F)-\mu(W))^2  - 4(\Delta(F) +\Delta(W)) - 1}{8}.
\end{align*}

\medskip\noindent
If $rk(U)>0$, then the same is true replacing $W$ by $U$.  This process determines characters numerically orthogonal to $U$ depending on a resolution of $U$.

\subsection{Exceptional bundles}\label{Exceptional bundles}
The geometry of coherent semi-stable sheaves on $\PP^2$ is largely controlled by exceptional vector bundles. We refer the reader to \cite{LP,DL} for an introduction to this fascinating circle of ideas. Let us recall the notions that we need.

\medskip\noindent
A stable vector bundle $E$ is called \textit{exceptional} if $\mathrm{ext}^i(E,E) =0$ for $i>0$, and only has homotheties as endomorphisms, i.e., $\mathrm{hom}(E,E)=1$.
In particular, this implies that $\chi(E,E)=1$, and consequently by \eqref{CHI}, its discriminant is \[\Delta(E) = \frac{1}{2}\left(1-\frac{1}{\mathrm{rk}(E)^2}\right).\]
This equation implies that the discriminant of an exceptional bundle is less than $\frac{1}{2}$.
In fact, exceptional bundles are the only stable sheaves with discriminant less than $\tfrac{1}{2}$. In addition, an exceptional bundle with slope $\alpha$ is unique up to isomorphism, therefore we can refer to \textit{the} exceptional bundle $E_{\alpha}$ with slope $\alpha$. Line bundles $\mathcal{O}_{\PP^2}(d)$, as well as the tangent bundle $\mathcal{T}_{\PP^2}$, are examples of exceptional bundles.

\medskip\noindent
Exceptional bundles on the plane sit in \textit{exceptional collections}. These are triples $\{E_\alpha,E_{\alpha\cdot \beta},E_\beta\}$, where the only nonzero $\mathrm{Ext}$-groups between them are $\Hom(E_\alpha,E_{\alpha\cdot \beta})$, $\Hom(E_\alpha,E_\beta)$, and $\Hom(E_{\alpha\cdot \beta},E_\beta)$.

\medskip\noindent
Each exceptional bundle $E$ determines two parabolas in the $(\mu,\Delta)$-plane defined by $\chi(E,*)=0$ and $\chi(*,E)=0$.  The equation of each of these parabolas is determined by \eqref{CHI} above. We denote the intersection points of these curves with the line $\Delta = \frac{1}{2}$ in a unit neighborhood of $E$ by $(\mu_L,\frac{1}{2})$ and $(\mu_R,\frac{1}{2})$, respectively.

\begin{definition}\label{Cherncharacter} For any exceptional character $\xi=(r,\mu,\Delta)$, the  characters $(1, \mu_L,\frac{1}{2})$ and $(1,\mu_R,\frac{1}{2})$ are called the right and left endpoint characters of $\xi$, respectively.  
\end{definition}

\noindent
This numeric data is useful in computations on the $(\mu,\Delta)$-plane such as the following definition. 

\begin{definition}[\cite{CHW}, Definition 3.2]\label{Def:controlling}
    Let $\xi$ be a character. We will call an exceptional bundle $E$ the \textit{primary controlling exceptional} bundle of $\xi$, if the intersection point $(r,\mu,\tfrac{1}{2})$ of $\xi^{\perp}$ and $\Delta = \frac{1}{2}$ with larger $\mu$ 
    lies between the left and right endpoint characters of $E$. 
\end{definition}

\medskip\noindent
Figure 1 displays $(\mu_L,\frac{1}{2})$ and $(\mu_R,\frac{1}{2})$ for $\xi=(2,\frac{1}{2},\frac{3}{8})$.

\medskip\noindent
The primary controlling exceptional bundle is uniquely determined by $\xi$. It will be useful in the proof of Theorem \ref{MAINdetailed} to write the controlling exceptional bundle of $\xi$ as $E_{\alpha.\beta}$, where 
$\{E_\alpha,E_{\alpha\cdot\beta},E_\beta\}$ is the exceptional collection
used in \cite[Proposition 5.3]{CHW}

\begin{center}
\begin{figure}[htb]\label{FIG1}
\resizebox{.75\textwidth}{!}{\includegraphics{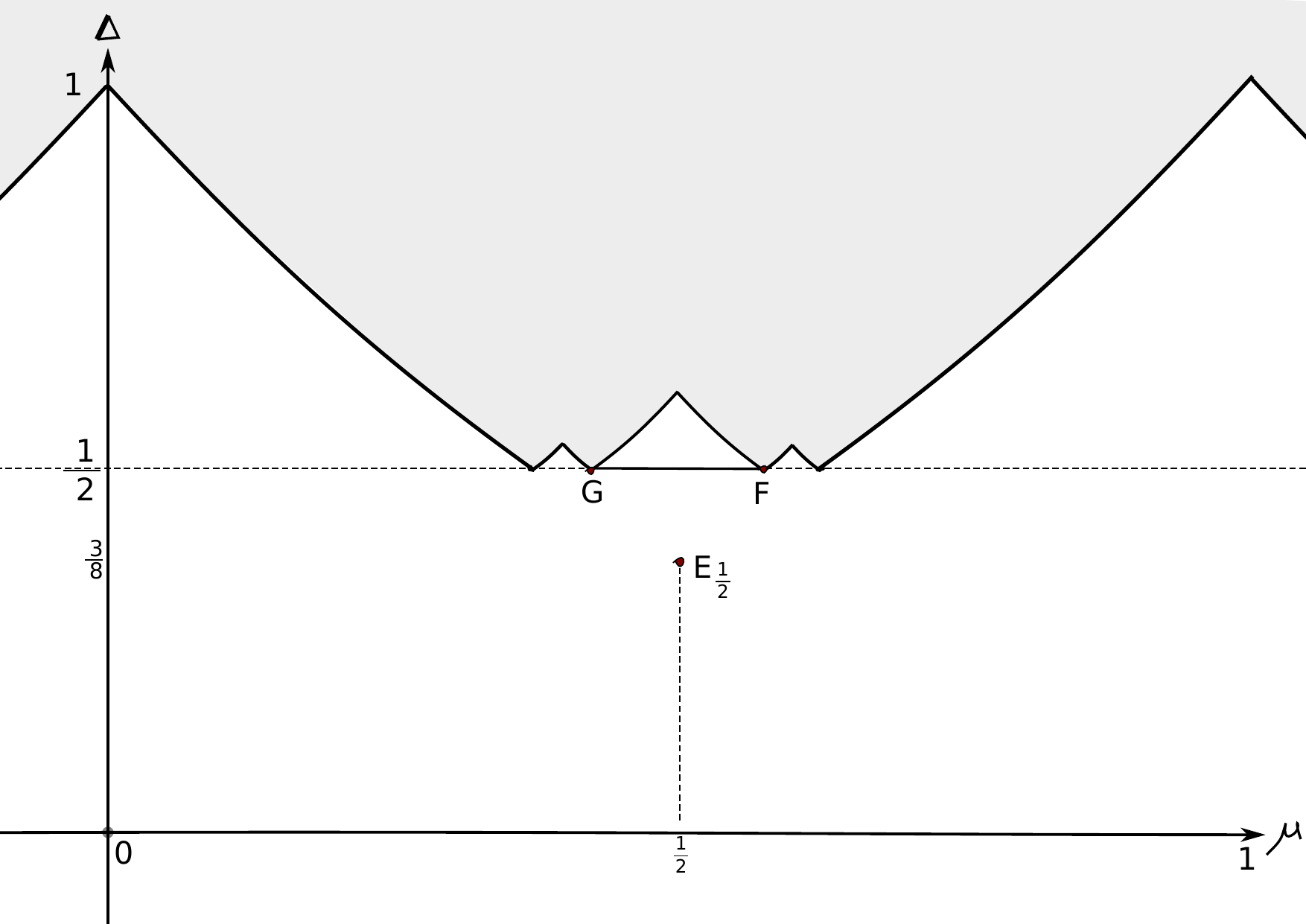}}
\caption{Endpoints $G=(\mu_L,\tfrac{1}{2})$, $F=(\mu_R,\tfrac{1}{2})$ prescribed by the Dr\'ezet-Le Potier curve over the exceptional bundle $E_{\frac{1}{2}}$.}
\end{figure}
\end{center}

\subsection{Minimal free resolution of a coherent sheaf on $\PP^2$}\label{section2.4}
The geometry of syzygies, and the minimal free resolutions they form, is a classical and robust topic of current research. We refer the reader to \cite{E} for a more complete introduction. We now recall what we need.

\medskip\noindent
The minimal free resolution of a coherent sheaf in projective space is a finer invariant than the Hilbert function and the Hilbert polynomial. It is unique up to isomorphism and for a torsion-free sheaf on the plane it is particularly simple: it depends only on 
generators and their relations. Indeed, any coherent torsion-free sheaf $U$ on $\mathbb{P}^2$ has a minimal free resolution of the form \[0 \xrightarrow{\hspace{7mm}} \bigoplus_{i=1}^m \mathcal{O}_{\PP^2}(-d_{2,i})\overset{M}{\xrightarrow{\hspace{7mm}}} \bigoplus_{i=1}^{m+k} \mathcal{O}_{\PP^2}(-d_{1,i}) \xrightarrow{\hspace{7mm}} U \xrightarrow{\hspace{7mm}} 0,\]
where $M$ is a matrix with homogeneous-polynomial entries.
The \textit{Betti number} $\beta_{1,j}$ is the number of summands 
such that $d_{1,i} = j$. 
In that same exact sequence, the number of summands 
such that $d_{2,i} = j+1$, is
the \textit{Betti number} $\beta_{2,j}$.

\medskip\noindent
Consider a two-term complex, such as the minimal free resolution of $U$ above,
\begin{equation}\label{2Cx}
  \mathcal{U}_\bullet: {\hspace{7mm}} \bigoplus_{i=1}^m \mathcal{O}_{\PP^2}(-d_{2,i})\overset{M}{\xrightarrow{\hspace{7mm}}} \bigoplus_{i=1}^{m+k} \mathcal{O}_{\PP^2}(-d_{1,i}). 
\end{equation}
If $\mathcal{S}_\bullet$ is a sub-complex of $\mathcal{U}_\bullet$,
$$\mathcal{S}_\bullet: \quad  \bigoplus_{i=1}^n \mathcal{O}_{\PP^2}(-d_{2,j_i})\overset{B}{\xrightarrow{\hspace{7mm}}} \bigoplus_{i=1}^{l} \mathcal{O}_{\PP^2}(-d_{1,\ell_i}),$$
with $n\le m$ and $l\le m+k$, then we will say that $\mathcal{S}_\bullet$ is \textit{contained} in $\mathcal{U}_\bullet$ if, up to row and column reduction, the matrix $B$ is a submatrix of $M$ and all the entries of $M$ above $B$ are $0$. Note that the indices $j_i$ and $\ell_i$ specify which minor of $M$ is $B$.

\medskip\noindent
If $\mathcal{S}_\bullet$ is contained in  $\mathcal{U}_\bullet$, then what is left over is the \textit{residual complex}:
$$\mathcal{R}_\bullet: \quad  \bigoplus_{i=n+1}^{m} \mathcal{O}_{\PP^2}(-d_{2,j_i})\overset{A}{\xrightarrow{\hspace{7mm}}} \bigoplus_{i=l+1}^{m+k} \mathcal{O}_{\PP^2}(-d_{1,\ell_i}).$$

\medskip\noindent
Observe that if a minimal free resolution $\mathcal{U}_{\bullet}$ is the mapping cone of the complexes $\mathcal{S}_{\bullet}$ and $\mathcal{R}_{\bullet}$, then $\mathcal{U}_{\bullet}$ contains $\mathcal{S}_{\bullet}$. This is the situation of Definition \ref{generalgaeta} and Theorems A \& B.

\medskip\noindent
By the \textit{dual} of $\mathcal{R}_\bullet$, we mean the complex 
$$\mathcal{R}_\bullet^*: \quad  \bigoplus_{i=l+1}^{m+k} \mathcal{O}_{\PP^2}(d_{1,\ell_i})\overset{A^T}{\xrightarrow{\hspace{7mm}}} \bigoplus_{i=n+1}^{m} \mathcal{O}_{\PP^2}(d_{2,j_i}).$$
We can similarly define the \textit{dual} for any complex of line bundles.

\medskip\noindent
\subsubsection{Gaeta resolution.} \label{GaetaRes} 
If a sheaf $U$ is general in its moduli space $M(\xi)$, then its minimal free resolution is often called \textit{Gaeta resolution} (after the Spanish mathematician Federico Gaeta \cite{Ciliberto2010}) and its Betti numbers depend only on $\xi$.  Indeed, if $\chi\left(\mathcal{T}_{\mathbb{P}^2}(-d-1), U\right) \geq 0$, where $d$ is the minimal integer such that $h^0(\mathbb{P}^2,U(d))>0$ and $\mathcal{T}_{\mathbb{P}^2}$ is the tangent sheaf of $\PP^2$, then the Gaeta resolution of $U$ is 
\[0 \xrightarrow{\hspace{7mm}} \mathcal{O}_{\PP^2}(-d-2)^{-n_3}  \oplus \mathcal{O}_{\PP^2}(-d-1)^{n_2} \overset{M}{\xrightarrow{\hspace{7mm}}}   \mathcal{O}_{\PP^2}(-d)^{n_1} \xrightarrow{\hspace{7mm}} U \xrightarrow{\hspace{7mm}} 0,\] where 
$n_1 = \chi\left(\mathcal{O}_{\PP^2}(-d),U\right)$, $n_2=\chi\left(\mathcal{T}_{\mathbb{P}^2}(-d-1), U\right)$, $n_3 = \chi\left(\mathcal{O}_{\PP^2}(-d+1),U\right)$, and $M$ is general.
Similarly, if $\chi\left(\mathcal{T}_{\mathbb{P}^2}(-d-1), U\right) \leq 0$, then the Gaeta resolution of $U$ is \[0 \xrightarrow{\hspace{7mm}} \mathcal{O}_{\PP^2}(-d-2)^{-n_3}  \overset{N}{\xrightarrow{\hspace{7mm}}} \mathcal{O}_{\PP^2}(-d-1)^{-n_2} \oplus   \mathcal{O}_{\PP^2}(-d)^{n_1} \xrightarrow{\hspace{7mm}} U \xrightarrow{\hspace{7mm}} 0,\] with $N$ general.
A Gaeta resolution is called \textit{pure} if it contains only two distinct line bundles.

\medskip\noindent
If $U$ is a locally-free sheaf, the dual $U^*$ has a minimal free resolution, which by dualizing again induces the following exact sequence 
\[0 \xrightarrow{\hspace{7mm}} U\xrightarrow{\hspace{7mm}}   \bigoplus_{i=1}^{m+k} \mathcal{O}_{\PP^2}(d_{1,i})\xrightarrow{\hspace{7mm}}\bigoplus_{i=1}^{k} \mathcal{O}_{\PP^2}(d_{2,i}) \xrightarrow{\hspace{7mm}} 0.\]

\medskip\noindent
\subsubsection{Generalized Gaeta resolution.}
\label{subsubsec: GaetaTriangle}
The general sheaf $U$ in the moduli space $M(\xi)$ admits a resolution in terms of exceptional bundles 
\cite[Proposition 5.3]{CHW}. 
Indeed, if $E_{\alpha\cdot\beta}$ is the controlling exceptional bundle of $\xi$ and $\chi(E_{-(\alpha\cdot\beta)}, U) \geq 0$, then $U$ has the generalized Gaeta resolution: 
\begin{equation}\label{res1}
0 \xrightarrow{\hspace{7mm}} E_{-\alpha-3}^{m_3}   \xrightarrow{\hspace{7mm}} E_{-\beta}^{m_2}  \oplus E_{-(\alpha\cdot\beta)}^{m_1} \xrightarrow{\hspace{7mm}} U \xrightarrow{\hspace{7mm}} 0,
\end{equation}
where $m_1 = \chi\left(E_{-(\alpha\cdot\beta)},U\right)$, $m_2 = -\chi\left(E_{-(\alpha\cdot(\alpha\cdot\beta))},U\right)$, and $m_3= -\chi\left(E_{-\alpha},U\right)$.

\medskip\noindent
Similarly,  if $E_{\alpha\cdot\beta}$ is the controlling exceptional bundle of $\xi$ and $\chi\left(E_{-(\alpha\cdot\beta)}, U\right) \leq 0$ for a general sheaf $U \in M(\xi)$, then $U$ has the generalized Gaeta resolution:
\begin{equation}\label{res2}
0 \xrightarrow{\hspace{7mm}} E_{-(\alpha\cdot\beta)-3}^{m_1} \oplus E_{-\alpha-3}^{m_3}   \xrightarrow{\hspace{7mm}} E_{-\beta}^{m_2}  \xrightarrow{\hspace{7mm}} U \xrightarrow{\hspace{7mm}} 0, 
\end{equation}
where $m_1=-\chi\left(E_{-(\alpha\cdot\beta)},U\right)$,  $m_2 = \chi\left(E_{-\beta},U\right)$,  and $m_3 = \chi\left(E_{-((\alpha\cdot\beta)\cdot \beta)},U\right)$.

\medskip\noindent
We have used relative Euler characteristics to express the exponents above as they occur in the Beilinson spectral sequence. 
Notice that if $E_{\alpha\cdot \beta}$ is a line bundle, the resolutions above coincide with the Gaeta resolution.

\subsection{Birational geometry}\label{BG}
Let us briefly recall the notions we will study in birational geometry and refer the reader to \cite{Lazarsfeld} for a thorough treatment.

\medskip\noindent
The N\'eron-Severi space $\mathrm{NS}(X)_\mathbb{Q}$ of a smooth algebraic variety $X$ is the vector space over $\mathbb{Q}$ of divisors modulo numerical equivalence. Inside the N\'eron-Severi space, the closure of the convex cone spanned by the classes of subvarieties is called the \textit{effective cone}, denoted $\mathrm{Eff}(X)$.
This cone decomposes according to the positivity of its divisor classes; specifically, according to their stable base locus.
The \textit{stable base locus} of a divisor class $D\in \EFF(X)$, denoted $\mathbf{B}(D)$, is the intersection of all divisors linearly equivalent to any positive integer multiple of $D$. That is \[\mathbf{B}(D):= \bigcap_{m>0}\bigcap_{D' \in \vert mD \vert}D'.\]
The stable base locus of an effective divisor has codimension at least 1. The \textit{movable cone}, $\Mov(X)$, is defined as the closure of the locus of classes in $\EFF(X)$ whose stable base locus has codimension $\geq 2$. 

\medskip\noindent
Moduli spaces of sheaves on $\mathbb{P}^2$ are Mori dream spaces \cite[Theorem 2.4]{CHW}. This implies that there is an open subset of the N\'eron-Severi space where the stable base locus is locally constant whose complement is contained in a union of finitely many hyperplanes \cite{ELMNP}. A connected component of this open subset is called a \textit{chamber}. The union of such chambers of $\EFF(X)$ is called the \textit{stable base locus decomposition}, which we abbreviate SBLD.

\subsection{Bridgeland stability}
If $\xi$ is a stable character, then the Picard rank of the moduli space satisfies $\rho(M(\xi))\le 2$. If $\rho(M(\xi))=2$, then the stable base locus decomposition of $M(\xi)$ has been worked out using Bridgeland stability \cite{LZ}. In that same paper, it is shown that each sheaf $U\in M(\xi)$ comes with one (or more) \textit{destabilizing object}, which is a torsion-free sheaf $F$ admitting a map $F \xrightarrow{\hspace{3mm}} U$. 
This leads to a distinguished triangle in the derived category $D^b(\mathbb{P}^2)$ (which explains our notation $(F,W)$-cone.) $$F\xrightarrow{\hspace{3mm}} U \xrightarrow{\hspace{3mm}} W \xrightarrow{\hspace{3mm}} \cdot$$

\medskip\noindent
Indeed, following \cite{ABCH}, if the pair $(\mu_{s,t},\mathcal{A}_s)$ is a stability condition on $M(\xi)$, the triangle above yields an inclusion of objects in the category $\mathcal{A}_s$ of the same $(s,t)$-slope, and $U$ is $(s,t)$-semi-stable, then $F$ is a destabilizing object, $W$ is a destabilizing quotient and the triangle a \textit{destabilizing sequence}. In the space of stability conditions, the $(s,t)$-plane, a wall $$W_{F,E}=\{(s,t)\ | \ E \ \mbox{is (s,t)-semi-stable and destabilized by } F\}$$ is a semi-circle \cite[Section 6]{ABCH}. Thus, it has a center $x$ and can be denoted by $W_x$.

\medskip\noindent
Complexes in the category $\mathcal{A}_s$ have cohomology only in degrees $-1$ and $0$. As a result,  we often denote a complex which sits in degrees $0$ and $1$ by $W[-1]$; that is, as the $-1$ shift of an object $W$ sitting in degrees $-1$ and $0$. For example, the vector bundle resolution in \eqref{res1} can be written in $D^b(\PP^2)$ as an exact triangle (called the \textit{Gaeta triangle}): 
\begin{equation*}
W[-1]\to E_{-(\alpha\cdot\beta)}^{m_1}\to U\to \cdot,
\end{equation*}
where $W$ is an $(s,t)$-semi-stable complex sitting in degrees $-1$ and $0$.

\medskip
\subsection{Categories}
In this paper, we work in three categories:
\begin{enumerate}
    \item $\mathrm{Coh}(\mathbb{P}^2)$ - the category of coherent sheaves on the plane,
    \item $\mathrm{Kom}(\mathbb{P}^2)$ - the category of complexes of coherent sheaves on the plane, and
    \item $D^b(\mathbb{P}^2)$ - the bounded derived category of complexes of coherent sheaves on the plane.
\end{enumerate}
\medskip\noindent
An object in $\mathrm{Coh}(\mathbb{P}^2)$ can be considered as an object in $\mathrm{Kom}(\mathbb{P}^2)$ or $D^b(\mathbb{P}^2)$, thinking of it as a complex sitting in degree 0.
Thus, for any two coherent sheaves $W$ and $F$, we have that 
\[\mathrm{Hom}_{\mathrm{Coh}(\mathbb{P}^2)}(W,F) = \mathrm{Hom}_{\mathrm{Kom}(\mathbb{P}^2)}(W,F) = \mathrm{Hom}_{D^b(\mathbb{P}^2)}(W,F).\]

\medskip\noindent
A bounded complex in $\mathrm{Kom}(\mathbb{P}^2)$ is an object in $D^b(\mathbb{P}^2)$, but in general $\mathrm{Hom}(W, F)_{\mathrm{Kom}(\PP^2)}\ne \mathrm{Hom}_{D^b(\PP^2)}(W, F)$. In certain circumstances, we can relate them as we show next. This lemma will be used in proving Theorem \ref{MAINdetailed}.

\begin{lem}\label{derivedHom} 
    Consider 2-term complexes $\;\;W_\bullet\;:\;\;W_{0}\overset{A}{\xrightarrow{\hspace{5mm}}} W_1 \;\;\;\text{ and }\;\;\; F_\bullet\;:\;\;F_{-1}\overset{B}{\xrightarrow{\hspace{5mm}}} F_0$.
    Let \[K= \mathrm{Hom}_{\mathrm{Coh}(\mathbb{P}^2)}(W_0,F_{-1})\oplus \mathrm{Hom}_{\mathrm{Coh}(\mathbb{P}^2)}(W_1,F_{0}).\]
    If $\mathrm{ext}^i_{\mathrm{Coh}(\mathbb{P}^2)}(W_k,F_\ell) =0$ (for all $i>0$ for all $k$ and $\ell$) and $\mathrm{hom}_{\mathrm{Coh}(\mathbb{P}^2)}(W_1,F_{-1}) =0$, then 
    \[\mathrm{Hom}_{D^b(\mathbb{P}^2)}(W,F) = \mathrm{Hom}_{\mathrm{Kom}(\mathbb{P}^2)}(W,F)\slash K\] where we consider $K$ as a subgroup of $\mathrm{Hom}_{\mathrm{Kom}(\mathbb{P}^2)}(W,F)$ via the map $(h_0,h_1)\mapsto h_1 \circ A-B\circ h_0$.
\end{lem}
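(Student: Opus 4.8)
The plan is to compute $\mathrm{Hom}_{D^b(\PP^2)}(W,F)$ using the standard fact that for bounded complexes, $\mathrm{Hom}$ in the derived category is computed by a hypercohomology-type spectral sequence (or, concretely, by the total complex of the double $\mathrm{Hom}$-complex), while $\mathrm{Hom}_{\mathrm{Kom}}$ is only the $0$-th cohomology of chain maps modulo nothing. First I would write out the $\mathrm{Hom}$-complex: since $W_\bullet$ sits in (cohomological) degrees $0,1$ and $F_\bullet$ in degrees $-1,0$, the double complex $\mathrm{Hom}^{\bullet}(W_\bullet,F_\bullet)$ has terms $\mathrm{Hom}(W_0,F_{-1})\oplus \mathrm{Hom}(W_1,F_{0})$ in degree $-1$, the full $\bigoplus_{k,\ell}\mathrm{Hom}(W_k,F_\ell)$ appropriately graded in degree $0$ (the chain maps and homotopies), and $\mathrm{Hom}(W_1,F_{-1})$ in degree $+1$. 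The differentials are the usual $h\mapsto d_F\circ h \pm h\circ d_W$. Then $\mathrm{Hom}_{D^b}(W,F)$ is the degree-$0$ cohomology of this complex (assuming $W_\bullet,F_\bullet$ are complexes of projectives/locally free sheaves, which they are — line bundles resp. the free resolution $F_\bullet$, so they compute derived $\mathrm{Hom}$ honestly; the sheaf $W$ I would replace by a two-term locally free complex, as the lemma's hypotheses on $\mathrm{Ext}^i$ vanishing are exactly what is needed). By contrast $\mathrm{Hom}_{\mathrm{Kom}}(W,F)$ is the kernel of $d^0$ (the chain maps), and the image of $d^{-1}$ is precisely $K$ embedded via $(h_0,h_1)\mapsto h_1\circ A - B\circ h_0$. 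So $\mathrm{Hom}_{D^b}(W,F) = \ker d^0/\mathrm{im}\, d^{-1} = \mathrm{Hom}_{\mathrm{Kom}}(W,F)/K$.

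The content of the lemma, therefore, reduces to checking two things. First, that the $\mathrm{Hom}$-complex really does compute $\mathrm{Hom}_{D^b}$ with no higher terms interfering: this needs that $\mathrm{Ext}^{>0}(W_k,F_\ell)=0$ for all $k,\ell$ so that the local-to-global spectral sequence for $R\mathrm{Hom}$ degenerates to the complex of global $\mathrm{Hom}$-sheaves, i.e. the hypercohomology spectral sequence $E_1^{p,q} = \bigoplus \mathrm{Ext}^q(W_{-p+\bullet},F_{\bullet})\Rightarrow \mathrm{Hom}_{D^b}^{p+q}$ collapses onto the $q=0$ row. That is exactly hypothesis (i). Second, that $d^0$ has no kernel beyond chain maps and, more importantly, that $K$ (the image of $d^{-1}$) is the \emph{full} degree-$(-1)$ piece mapping in — but one must rule out that $d^{-1}$ itself has a kernel, i.e. that $(h_0,h_1)\mapsto h_1A - Bh_0$ is injective, for the quotient description $K\cong \mathrm{Hom}(W_0,F_{-1})\oplus\mathrm{Hom}(W_1,F_0)$ to be literally a subgroup isomorphic to that direct sum. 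Actually the statement only asserts we quotient by the image of this map, viewed as a subgroup of $\mathrm{Hom}_{\mathrm{Kom}}$; so strictly I need that the map lands inside $\mathrm{Hom}_{\mathrm{Kom}}(W,F)$ (clear: $h_1A - Bh_0$ is null-homotopic hence a chain map) and that its image is exactly the null-homotopic chain maps. The hypothesis $\mathrm{hom}(W_1,F_{-1})=0$ kills the degree-$(+1)$ term of the $\mathrm{Hom}$-complex, which is what guarantees $d^{-1}$ surjects onto all of the relevant homotopy relations without obstruction coming from $d^0$-cocycle conditions being quotiented further — more precisely it ensures the degree-$0$ cohomology is a genuine quotient of $\ker d^0$ by $\mathrm{im}\,d^{-1}$ and nothing is killed/added from degree $+1$.

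Concretely the steps are: (1) replace $W$ by a two-term complex of locally free sheaves (allowed by the $\mathrm{Ext}$-vanishing and the fact that on $\PP^2$ torsion-free sheaves have length-$\le 2$ locally free resolutions), so both complexes are bounded complexes of locally frees and $R\mathrm{Hom}$ is computed naively; (2) form the total $\mathrm{Hom}$-complex $C^\bullet$ with $C^{-1} = \mathrm{Hom}(W_0,F_{-1})\oplus\mathrm{Hom}(W_1,F_0)$, $C^0 = \mathrm{Hom}(W_0,F_0)\oplus\mathrm{Hom}(W_1,F_{-1})\oplus(\text{shifts})$ — I would just write the three nonzero graded pieces carefully — and $C^{1} = \mathrm{Hom}(W_1,F_{-1})$; (3) identify $H^0(C^\bullet) = \mathrm{Hom}_{D^b}(W,F)$ via the hypercohomology spectral sequence, invoking hypothesis (i) for degeneration; (4) observe $\ker(C^0\to C^1) = \mathrm{Hom}_{\mathrm{Kom}}(W,F)$ by unwinding the chain-map condition, and that $C^1 = 0$ would make this automatic but in general the vanishing $\mathrm{hom}(W_1,F_{-1})=0$ gives $C^1=0$ directly; (5) observe $\mathrm{im}(C^{-1}\to C^0)$ is precisely the subgroup $K$ via the stated formula, so $H^0(C^\bullet) = \mathrm{Hom}_{\mathrm{Kom}}(W,F)/K$.

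The main obstacle I anticipate is bookkeeping: getting the signs and the precise index ranges in the total $\mathrm{Hom}$-complex right, and being careful that the two complexes are placed in the cohomological degrees the paper intends ($W_\bullet$ in degrees $0,1$ — i.e. it is already a ``$W[-1]$'' in the notation of the Bridgeland-stability subsection — and $F_\bullet$ in degrees $-1,0$). Once the complex is written down, the vanishing hypotheses make every potentially-obstructing term disappear and the identification of $H^0$ as the desired quotient is essentially formal. A secondary subtlety worth a sentence is justifying step (1) — that one may compute $\mathrm{Hom}_{D^b}(W,F)$ using a locally free replacement of $W$ without changing $\mathrm{Hom}_{\mathrm{Kom}}$; here one uses that $W$ was already given as (quasi-isomorphic to) a two-term complex of line bundles in the applications, so in practice $W_\bullet$ is itself the locally free object and no replacement is needed.
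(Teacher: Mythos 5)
Your route is sound and reaches the right conclusion, so let me first situate it: the paper's own proof is a one-line reduction to \cite[Lemma 5.4]{CHW} after reindexing $W$, plus the observation that, by degree reasons, a chain map $W_\bullet\to F_\bullet$ has only the single component $W_0\to F_0$, so $\mathrm{Hom}_{\mathrm{Kom}}(W,F)=\mathrm{Hom}_{\mathrm{Coh}}(W_0,F_0)$ with no cocycle condition. What you write out is essentially the standard total-$\mathrm{Hom}$-complex argument underlying that cited lemma, so it is an acceptable self-contained substitute rather than a genuinely different method; the price you pay is having to do the degree bookkeeping yourself, and that is where you slip.

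The concrete error: with $W_\bullet$ in degrees $0,1$ and $F_\bullet$ in degrees $-1,0$, the component $\mathrm{Hom}(W_1,F_{-1})$ of the total $\mathrm{Hom}$-complex sits in degree $(-1)-1=-2$, not in degree $+1$ (nor in degree $0$, where you also list it). The complex is
\[
\mathrm{Hom}(W_1,F_{-1})\;\longrightarrow\;\mathrm{Hom}(W_0,F_{-1})\oplus\mathrm{Hom}(W_1,F_0)\;\longrightarrow\;\mathrm{Hom}(W_0,F_0)\;\longrightarrow\;0
\]
concentrated in degrees $-2,-1,0$. In particular there is nothing in positive degrees, so $\ker d^0$ is all of $\mathrm{Hom}(W_0,F_0)$ for trivial reasons --- this is exactly the paper's degree remark --- and the hypothesis $\mathrm{hom}(W_1,F_{-1})=0$ is \emph{not} what makes the cocycle condition vacuous; that vanishing only touches $H^{-2}$ and $H^{-1}$ and plays no role in identifying $H^0$ in this indexing (it is harmless, and presumably carried over from the shape of the statement in \cite{CHW}). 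So your stated mechanism for that hypothesis is wrong, though not fatally: since the misplaced term is assumed to vanish anyway, your complex coincides with the correct one in degrees $\ge -1$ and the computation $H^0=\mathrm{Hom}(W_0,F_0)/\mathrm{im}(d^{-1})=\mathrm{Hom}_{\mathrm{Kom}}(W,F)/K$ goes through once the indexing is fixed. The remaining points --- using the $\mathrm{Ext}^{>0}$-vanishing to collapse the hypercohomology spectral sequence (equivalently, to run the two stupid-filtration triangles), and noting that injectivity of $(h_0,h_1)\mapsto h_1\circ A-B\circ h_0$ is not required because one only quotients by its image --- are handled correctly.
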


\begin{proof}
This proof follows \cite[Lemma 5.4]{CHW} closely after shifting the indices of $W$ and noting that $\mathrm{Hom}_{\mathrm{Kom}(\mathbb{P}^2)}(W, F) = \mathrm{Hom}_{\mathrm{Coh}(\mathbb{P}^2)}(W_0,F_0)$ by degree considerations.
\end{proof}

\medskip\noindent
This lemma says that every homomorphism in the derived category between the complexes $W_\bullet$ and $F_\bullet$ comes from a homomorphism of complexes, $i.e.$, given a derived homomorphism, we can pick a representative homomorphism of complexes which descends to it. When every term is free, this representative is unique up to row and column reduction as modding out by $K$ is modding out by a subset of such row and column reductions.

\medskip
\section{Proof of Theorem \ref{MAINdetailed} }
\label{sec: eff}

\noindent
In this section, we prove our main result Theorem \ref{MAINdetailed}. 

\medskip\noindent
Let $\xi$ be a fixed log Chern character such that the moduli space $M(\xi)$ is positive dimensional with Picard rank 2. 
 The proof of Theorem \ref{MAINdetailed} depends on the sign of the pairing $ \chi(E_{-(\alpha\cdot\beta)}, U)$, where $E_{\alpha\cdot\beta}$ denotes the controlling exceptional of $\xi$ and $U\in M(\xi)$. We address each case separately.
We will prove Part (c) first, and use it as a template for showing Parts (a) and (b).


\medskip
\subsection{Exceptional case: $\chi(E_{-(\alpha\cdot\beta)},U)=0$.}
We will show the following: a sheaf $U$ admits a resolution as in \eqref{exceptionalgaetatriangle} 
if and only if the minimal free resolution $U_{\bullet}$ is the mapping cone of the minimal free resolution of $E_{-\beta}^{m_2}$ and the dual of the minimal free resolution of $E_{\alpha + 3}^{m_3}$.

\begin{proof}[Proof of Theorem \ref{MAINdetailed}, Part (c).]

Assume that $U$ has a generalized Gaeta resolution \eqref{exceptionalgaetatriangle}
\begin{equation}\label{res4}
0 \xrightarrow{\hspace{7mm}} (E_{-3-\alpha})^{m_3}  \xrightarrow{\hspace{7mm}} \left(E_{-\beta}\right)^{m_2}  \xrightarrow{\hspace{7mm}} U \xrightarrow{\hspace{7mm}} 0
\end{equation}
where $m_3= -\chi\left( E_{-\alpha},U\right) = \chi\left( E_{-((\alpha.\beta).\beta)},U\right)$ and $m_2 = \chi(E_{-\beta},U) = -\chi(E_{-(\alpha.(\alpha.\beta))},U)$. 

If $E_{\alpha \cdot \beta}$ is a twist of the tangent bundle $\mathcal{T}_{\PP^2}(k)$ or a line bundle, then $E_{\alpha}$ and $E_{\beta}$ are line bundles and the result follows as Resolution \eqref{res4} is the Gaeta resolution. Let us assume below that $\textrm{rk}(E_{\alpha\cdot \beta})>2$, and write Resolution \eqref{res4} as follows:
\[0 \xrightarrow{\hspace{7mm}} W \overset{f}{\xrightarrow{\hspace{7mm}}} F \xrightarrow{\hspace{7mm}} U\xrightarrow{\hspace{7mm}} 0, \] 
where $F= E_{-\beta}^{m_2}$ and $W=E_{-\alpha-3}^{m_3}$. 

We aim to show that \eqref{res4} gives rise to the minimal free resolution of $U$ using the following strategy: 
In $D^b(\PP^2)$, we replace $W$ and $F$ by equivalent complexes of line bundles which come from minimal free resolutions. Then, we show that $f$ lifts to a map of such complexes and its mapping cone is a complex involving only line bundles. Finally, it suffices to argue that this mapping cone is, in fact, the minimal free resolution of $U$.

To follow that strategy, recall that the sheaf $F$ has a minimal free resolution, which as a complex in degrees $-1,0$ is \[F_{\bullet}: \quad F_{-1} \overset{B}{\xrightarrow{\hspace{7mm}}} F_{0},\] with cohomology $\mathcal{H}^{0}(F_\bullet)=coker(B)=F$ and zero in all other degrees.
Notice that $F$, as a complex in degree $0$ and zero elsewhere, and $F_\bullet$ are equivalent complexes.
Similarly, by dualizing the minimal free resolution of $W^*$, we have the complex
\begin{equation}\label{W}
W_{\bullet}: \quad  W_{0} \overset{A}{\xrightarrow{\hspace{7mm}}} W_{1} 
\end{equation}
with cohomology $\mathcal{H}^{0}(W_\bullet)=ker(A)=W$ and zero in all other degrees. 
Again, $W_\bullet$ and $W$ are equivalent in $D^b(\PP^2)$. 

As outlined above, we want to lift the map $f$ to the category of complexes using Lemma \ref{derivedHom}, so we need to show that $\mathrm{ext}_{\mathrm{Coh}(\PP^2)}^i(W_k,F_\ell)=0$ for all $i>0$. 
In order to do this, consider the integer $d$ such that $E_{\alpha \cdot \beta} = E^*(d)$ and $\mu_E \in [0,1)$.
Since $\mathrm{rk}(E_{\alpha\cdot\beta})>2$, $-\alpha$ and $-\beta$ lie in the same half-integer interval, either $[-d,-d+\frac{1}{2}]$ or $[-d+\frac{1}{2},-d+1]$. 
This implies that the Gaeta resolution of $E_{-\beta}$ can be written in terms of $\mathcal{O}_{\PP^2}(-d-2)$, $\mathcal{O}_{\PP^2}(-d-1),\mathcal{O}_{\PP^2}(-d)$, and that of $E_{\alpha+3}$ in terms of $\mathcal{O}_{\PP^2}(d)$, $\mathcal{O}_{\PP^2}(d+1),\mathcal{O}_{\PP^2}(d+2)$. 

Now, the Beilinson spectral sequence and the previous paragraph tell us that if $\mu_E \in [0,\frac{1}{2})$, 

\begin{equation*}
\begin{aligned}
    F_{-1} &= \mathcal{O}_{\PP^2}(-d-2)^{-\chi(\mathcal{O}_{\PP^2}(-d+1),F)} \oplus \mathcal{O}_{\PP^2}(-d-1)^{\chi(E_{\frac{1}{2}}(-d) ,F)},\\
    F_{0} &= \mathcal{O}_{\PP^2}(-d)^{\chi(\mathcal{O}_{\PP^2}(-d),F)},\\
    W_{0} &= \mathcal{O}_{\PP^2}(-d-2)^{\chi(\mathcal{O}_{\PP^2}(-d+1),W)} \oplus \mathcal{O}_{\PP^2}(-d-1)^{-\chi(E_{\frac{1}{2}}(-d),W)} ,\text{ and}\\
    W_{1} &= \mathcal{O}_{\PP^2}(-d)^{-\chi(\mathcal{O}_{\PP^2}(-d),W)},\\
\end{aligned}
\end{equation*}
and if $\mu_E \in \left[\frac{1}{2},1\right)$, 
\begin{equation*}
\begin{aligned}
    F_{-1} &= \mathcal{O}_{\PP^2}(-d-2)^{-\chi(\mathcal{O}_{\PP^2}(-d+1),F)},\\
    F_{0} &= \mathcal{O}_{\PP^2}(-d-1)^{-\chi(E_{\frac{1}{2}}(-d),F)}\oplus \mathcal{O}_{\PP^2}(-d)^{\chi(\mathcal{O}_{\PP^2}(-d),F)},\\
    W_{0} &=  \mathcal{O}_{\PP^2}(-d-2)^{\chi(\mathcal{O}_{\PP^2}(-d+1),W)},\text{ and }\\
    W_1 &= \mathcal{O}_{\PP^2}(-d-1)^{\chi(E_{\frac{-1}{2}}(-d),W)} \oplus\mathcal{O}_{\PP^2}(-d)^{-\chi(\mathcal{O}_{\PP^2}(-d),W)}.\\
\end{aligned}
\end{equation*}

Since $W_k$ and $F_\ell$ are direct sums of line bundles with 3 consecutive degrees,\\ $\mathrm{ext}_{\mathrm{Coh}(\PP^2)}^i(W_k,F_\ell) = 0$ for all $k,\ell$ and $i>0$.  
Similarly, $\mathrm{hom}_{\mathrm{Coh}(\PP^2)}(W_1, F_{-1})=0$ by degree considerations so $f$ induces a map of complexes $f_\bullet:W_{\bullet} \xrightarrow{\hspace{3mm}} F_{\bullet}$ by Lemma \ref{derivedHom}.

In the category of complexes, $f_\bullet$ maps $W_i \xrightarrow{\hspace{3mm}} F_i$ in the following way:
\begin{center}
    \begin{tikzcd}
        W_{\bullet}:  0 \arrow{rd}{0} \arrow[r] & W_{0} \arrow{r}{A} \arrow{rd}{\tilde{f}} & W_1 \arrow{rd}{0} \arrow{r}{d_1}  &  0 \\  
       F_{\bullet}:   0 \arrow[r] & F_{-1} \arrow{r}{B}  & F_{0} \arrow{r}{d_{0}}  &  0. 
    \end{tikzcd}
\end{center} 

Consequently, we can think of $f$ as a map of complexes defined by $\Tilde{f}$ in degree $0$ and zero elsewhere. Note that $W_{0}$ and $F_{0}$ are sums of line bundles, so the map $\Tilde{f}$ can be represented by a matrix, denoted by $C$.
Observe that ${f_\bullet}$ and $f$ represent the same map in $D^b(\PP^2)$. This means that they have equivalent mapping cones. It follows that $U$, as the cohomology of the mapping cone of $f$ in degree $0$, is also the cohomology of the mapping cone of $f_\bullet$ in the same degree. Let us write this latter cone explicitly as it will be the minimal free resolution of $U$. 

Recall that the minimal free resolution of $F$ yields a matrix $B$, and similarly we get a matrix $A^T$ from the minimal free resolution of $W^*$. The matrix $C$ is induced by $\Tilde{f}$. The mapping cone of $f_\bullet$ is therefore the complex
\begin{equation}\label{eq: summand0}
Z_{\bullet}: \quad W_{0}\oplus F_{-1} \overset{M}{\xrightarrow{\hspace{7mm}}} W_1\oplus F_{0}
\end{equation} 
sitting in degrees $-1,0$, and where the matrix $M$ can be written as follows
\[M=\left(
\begin{array}{c|c}
0  & A \\ \hline
 B & C 
\end{array}\right).\]

By construction, $\mathcal{H}^{0}(Z_\bullet)\cong  U$ and $\mathcal{H}^{i}(Z_\bullet)\cong  0$ for $i\neq 0$, so we get the following short exact sequence of free sheaves which is a free resolution of $U$:
$$0\xrightarrow{\hspace{7mm}} W_{0}\oplus F_{-1} \overset{M}{\xrightarrow{\hspace{7mm}}} W_1\oplus F_{0}\xrightarrow{\hspace{7mm}} U \xrightarrow{\hspace{7mm}} 0.$$

It now suffices to notice that this free resolution of $U$ is minimal as $W_{0}\oplus F_{-1}$ and $F_{0}\oplus W_1$ do not share a line bundle of the same degree by the computation above.

This construction exhibits the minimal free resolution of $U$ as an $(F,W)$-cone, which completes this part of the proof.
Note that the minimal free resolution of $U$ above is the Gaeta resolution as the Euler characteristic is additive on the short exact sequence \eqref{res4} and this is the only minimal free resolution where only 3 consecutive degrees appear among the line bundles, and no degree appears both in the generators and the relations. 

\medskip
Now, let us argue the converse. Assume that the minimal free resolution of $U$ is the mapping cone $U_{\bullet}=MC(f)_{\bullet}$, in the category of complexes, between complexes formed with sums of line bundles $$W_{\bullet}\overset{f}{\to}F_{\bullet}\to U_{\bullet}\to\cdot$$ such that the only non-zero cohomology is $\mathcal{H}^{0}(W_{\bullet})=E_{-\alpha-3}^{m_3}$ and $\mathcal{H}^0(F)=E_{-\beta}^{m_2}$. Then
the induced sequence in cohomology
$$0\to \mathcal{H}^{0}(W)\to \mathcal{H}^{0}(F)\to \mathcal{H}^{0}(U)\to 0$$
 yields Resolution (3) as desired.
This completes the proof of Part (c) of Theorem \ref{MAINdetailed}.
\end{proof}

\medskip\noindent
Notice that either the matrix $B$ or $A$ in the previous proof may be a matrix with size zero. This occurs when either $E_{-\beta}$ or $E_{-\alpha-3}$ is a line bundle. In case both $E_{-\beta}$ and $E_{-\alpha-3}$ are line bundles, Resolution \eqref{res4} coincides with the Gaeta resolution.

\medskip\noindent
The following example exhibits how an ideal sheaf of $n$ points $\mathcal{I}_Z$ can be $E$-admissible, for an exceptional bundle $E$ of high rank. This reveals how the subcomplexes of $(\mathcal{I}_Z)_{\bullet}$ can be very complicated as the value of $n$ increases.

\begin{example}\label{ex::reallyBigExample}
Let $\xi=(1,0,2896)$. Then $M(\xi)\cong \PP^{2[2896]}$ is the Hilbert scheme of $n=2896$ points on the plane. The Chern character of the controlling exceptional bundle is $e_{\alpha\cdot\beta}=(194,\frac{14475}{194},\frac{37635}{75272})$ and, by Riemann-Roch, we have that $(\xi, e_{\alpha\cdot \beta})=0$.  Thus, it follows that $\alpha=\tfrac{373}{5}$ and $\beta=\tfrac{970}{13}$.

\medskip\noindent
Consider $U\in M(\xi)$ general. By Theorem \ref{MAINdetailed}, Part (c) the minimal free resolution of $U$
$$0\xrightarrow{\hspace{7mm}} \mathcal{O}_{\PP^2}(-77)^{46} \overset{M}{\xrightarrow{\hspace{7mm}}} \mathcal{O}_{\PP^2}(-76)^{17} \oplus \mathcal{O}_{\PP^2}(-75)^{30} \xrightarrow{\hspace{7mm}} U \xrightarrow{\hspace{7mm}} 0,$$ 
is the mapping cone of the following two complexes:
$$E_{-\beta}^2: \quad \mathcal{O}_{\PP^2}(-77)^{6} \xrightarrow{\hspace{7mm}} \mathcal{O}_{\PP^2}(-76)^{2}\oplus  \mathcal{O}_{\PP^2}(-75)^{30}  $$ and 
$$E_{-\alpha-3}^5: \quad \mathcal{O}_{\PP^2}(-77)^{40}\xrightarrow{\hspace{7mm}} \mathcal{O}_{\PP^2}(-76)^{15}.$$ 
This implies that the matrix $M$ has the following shape:
\[M=\left(
\begin{array}{c|c}
0  & A \\ \hline
 B & C 
\end{array}\right),\]
and $B$ can be thought of as the matrix in the minimal free resolution of $E_{-\beta}^2$.

\medskip\noindent
Corollary \ref{cor: EffRigid} will imply that the $(E_{-\beta}^2,E_{-\alpha-3}^5)$-cone structure of $(\mathcal{I}_Z)_{\bullet}$ characterizes the stable base locus of the first SBLD chamber of $\PP^{2[2896]}$. 
Indeed, given a matrix similar to $M$, but in which $B$ fails to be full-rank, we can construct a sheaf $U'$ contained in the base locus of the first primary chamber of $\EFF(M(\xi))$. Here, $U'$ has the same Betti numbers as $U$, which exemplifies that the Betti numbers fail to characterize the stable base locus.
\end{example}

\subsection{The positive case: $\chi(E_{-(\alpha\cdot \beta)}, U)>0$.}\label{Positivecase}
In this subsection, we prove Part (a) of Theorem \ref{MAINdetailed}. 

\medskip\noindent
The argument of this proof is an iteration of an argument in the previous subsection. 
An additional difficulty arises here because we cannot treat $W$ as a sheaf as in the previous subsection, because it is a 2-term complex of vector bundles.

\begin{proof}[Proof of Theorem \ref{MAINdetailed}, Part (a)] 
Part of this theorem aims to conclude that Bridgeland semi-stable objects form the minimal free resolution via mapping cones. Then, in the following proof we keep track of the grading to clarify this, by writing $W_\bullet[-1]$ instead of $W_\bullet$, etc.

Let $U$ be a sheaf in $M(\xi)$, and set 
$$m_1=\chi(E_{-(\alpha\cdot\beta)},U),\quad m_2=-\chi(E_{-(\alpha\cdot(\alpha\cdot\beta))},U), \quad \mbox{and} \quad m_3=-\chi(E_{-\alpha},U).$$

We prove the sufficient direction first. Assume $U$ admits a resolution as in \eqref{res1}.  The Gaeta triangle induced by Resolution \eqref{res1} is 
\[ W_\bullet[-1] \overset{f_\bullet}{\xrightarrow{\hspace{7mm}}} F\xrightarrow{\hspace{7mm}} U\xrightarrow{\hspace{7mm}} \cdot, \] where 
$F=E_{-(\alpha \cdot \beta)}^{m_1}$ and $W_\bullet$ is the 2-term complex in the derived category $D^b(\PP^2)$, $$W_{\bullet}: \quad E_{-3-\alpha}^{m_3} \overset{t}{\xrightarrow{\hspace{7mm}}} E_{-\beta}^{m_2}$$ sitting in degrees $-1$ and $0$.  The sheaf $F$ has a minimal free resolution, which can be written as the complex in degrees $-1,0$ \[F_{\bullet}: \quad  F_{-1} \overset{B}{\xrightarrow{\hspace{7mm}}} F_{0} ,\] with cohomology $\mathcal{H}^{0}(F_\bullet)=coker(B)=F$ and zero in all other degrees. This complex is equivalent to $F$ in $D^b(\PP^2)$. Let us now address $W_{\bullet}$.

Thinking of $G[-1]=E_{-3-\alpha}^{m_3}$ and $H=E_{-\beta}^{m_2}$ as vector bundles, let us consider the following minimal free resolutions 
\[0 \xrightarrow{\hspace{7mm}} G[-1]_{1}^* \xrightarrow{\hspace{7mm}} G[-1]_{0}^* \xrightarrow{\hspace{7mm}} G[-1]^*\xrightarrow{\hspace{7mm}} 0 \;\;\text{ and}\]  
\[0 \xrightarrow{\hspace{7mm}} H_{-1} \xrightarrow{\hspace{7mm}} H_{0} \xrightarrow{\hspace{7mm}} H \xrightarrow{\hspace{7mm}} 0,\]
where $G[-1]^*$ stands for the dual vector bundle of $G[-1]$. These are short exact sequences of sheaves 
and the labeling (shift and subscript) allows us to consider them as complexes in $D^b(\PP^2)$ with the correct grading: 
\[H_\bullet: \quad  H_{-1} \xrightarrow{\hspace{7mm}} H_0  \;\;\text{ and}\]\[G[-1]_\bullet: \quad  G[-1]_0 \xrightarrow{\hspace{7mm}} G[-1]_1.\] These complexes are equivalent in $D^b(\PP^2)$ to the sheaves $H$ and $G[-1]$ in degree $0$.

In order to lift the map $t$, using Lemma \ref{derivedHom}, to a map in the category of complexes
\[t_{\bullet}:G[-1]_{\bullet}\xrightarrow{\hspace{5mm}} H_{\bullet}\] 
we must compute the line bundles that appear in them. 
To this end, consider the integer $d$ such that $E_{\alpha \cdot \beta} = E^*(d)$ and $\mu_E \in [0,1)$. 
If $\mathrm{rk}(E_{\alpha\cdot\beta})=1$, then the result holds as \eqref{gaetatrianglepositive} is the Gaeta resolution.
If $\mathrm{rk}(E_{\alpha\cdot\beta})=2$, i.e., if $E_{\alpha\cdot\beta}= E_{\frac{1}{2}}(d-1)$ then $H_{-1}=0$, $G[-1]_1=0$, $H_0 = \mathcal{O}_{\PP^2}(-d)^{m_2}$, and $G[-1]_0 = \mathcal{O}_{\PP^2}(-d-2)^{m_3}$, which is a trivial subcase of what follows when $\mathrm{rk}(E_{\alpha\cdot\beta})>2$.
When $\mathrm{rk}(E_{\alpha\cdot\beta})>2$, similarly to Part (c),  
we have that 
if $\mu_E \in [0,\frac{1}{2})$, then

\begin{equation*}
\begin{aligned}
    H_{-1} &= \mathcal{O}_{\PP^2}(-d-2)^{-\chi(\mathcal{O}_{\PP^2}(-d+1),H)} \oplus \mathcal{O}_{\PP^2}(-d-1)^{\chi(E_{\frac{1}{2}}(-d) ,H)},\\
    H_{0} &= \mathcal{O}_{\PP^2}(-d)^{\chi(\mathcal{O}_{\PP^2}(-d),H)},\\
    G[-1]_{0} &= \mathcal{O}_{\PP^2}(-d-2)^{\chi(\mathcal{O}_{\PP^2}(-d+1),G)} \oplus \mathcal{O}_{\PP^2}(-d-1)^{-\chi(E_{\frac{1}{2}}(-d),G)} ,\\
    G[-1]_1 &= \mathcal{O}_{\PP^2}(-d)^{-\chi(\mathcal{O}_{\PP^2}(-d),G)},\\
\end{aligned}
\end{equation*}
and if $\mu_E \in \left[\frac{1}{2},1\right)$, then
\begin{equation*}
\begin{aligned}
    H_{-1} &= \mathcal{O}_{\PP^2}(-d-2)^{-\chi(\mathcal{O}_{\PP^2}(-d+1),H)},\\
    H_{0} &= \mathcal{O}_{\PP^2}(-d-1)^{-\chi(E_{\frac{1}{2}}(-d) ,H)}\oplus \mathcal{O}_{\PP^2}(-d)^{\chi(\mathcal{O}_{\PP^2}(-d),H)},\\
    G[-1]_{0} &= \mathcal{O}_{\PP^2}(-d-2)^{\chi(\mathcal{O}_{\PP^2}(-d+1),G)} ,\\
    G[-1]_1 &= \mathcal{O}_{\PP^2}(-d-1)^{\chi(E_{\frac{1}{2}}(-d),G)}\oplus \mathcal{O}_{\PP^2}(-d)^{-\chi(\mathcal{O}_{\PP^2}(-d),G)}.\\
\end{aligned}
\end{equation*}

This computation allows us to apply Lemma \ref{derivedHom} in order to lift the map $t$ to the map of complexes $t_{\bullet}$.
In the category of complexes, $t_\bullet$ maps $G[-1]_i \xrightarrow{\hspace{3mm}} H_i$ as follows
\begin{center}
    \begin{tikzcd}
        G[-1]_{\bullet}:   &0  \arrow{rd}{0} \arrow[r] & G[-1]_{0} \arrow{r}{} \arrow{rd}{\Tilde{t}} & G[-1]_1 \arrow{rd}{0} \arrow{r}{}  &  0 \\  
       H_{\bullet}:      &0 \arrow[r] & H_{-1} \arrow{r}{}  & H_{0} \arrow{r}{}  &  0. 
    \end{tikzcd}
\end{center} 

Consequently, we can think of the map in $W$ as a map of complexes defined by $\Tilde{t}$ in degree $0$ and zero elsewhere. Note that $G[-1]_{0}$ and $H_{0}$ are sums of line bundles, so $\Tilde{t}$ can be represented by a matrix.

Following the construction of \eqref{eq: summand0}, $W_\bullet$ is equivalent in $D^b(\PP^2)$ to the following complex 
$$W_{\bullet}: \quad H_{-1}\oplus G[-1]_{0}\overset{A}{\xrightarrow{\hspace{7mm}}} H_{0}\oplus G[-1]_{1},$$ which sits in degrees $-1,0$ and whose factors are sums of line bundles.

Observe that $W[-1]_\bullet$ is now in the same form as in \eqref{W}. Also, note that $W[-1]_\bullet$ contains the minimal free resolution of $E_{-\beta}^{m_2}$. Proceeding identically as in the proof of Theorem \ref{MAINdetailed} Part (c) using $W[-1]_\bullet$ in place of $W_\bullet$, we conclude that the mapping cone of $f_{\bullet}$ yields the minimal free resolution of $U$ as $F$-admissible and the residual complex $W[-1]$ is a $(E_{-\beta}^{m_2}, E_{-\alpha - 3}^{m_3})$-cone.

\medskip
Now, let us argue the converse. Assume that the minimal free resolution of $U$ is the mapping cone $U_{\bullet}=MC(g)_{\bullet}$, in the category of complexes, of complexes formed with sums of line bundles $$W_{\bullet}[-1]\overset{g}{\to} F_{\bullet}\to U_{\bullet}$$ such that $F_{\bullet}$ is the minimal free resolution of $F=E_{-(\alpha.\beta)}^{m_1}$ and $W_{\bullet}$ is a  $(E_{-\beta}^{m_2},E_{-\alpha - 3}^{m_3})$-cone. 
In other words, in the derived category, $W_{\bullet}[-1]$ is equivalent to the complex $E_{-\alpha-3}^{m_3} \to E_{-\beta}^{m_2}$ in degrees $0,1$, and $F_\bullet$ is equivalent to the sheaf $E_{-(\alpha.\beta)}^{m_1}$ in degree $0$. 
By Lemma \ref{derivedHom}, $g$ lifts to a map between these complexes.
Since that map of complexes has a mapping cone whose only cohomology is $U$ in degree $0$ and the correct bundles are involved, this induces Resolution \eqref{res1}, which completes the proof of Part (a) of Theorem \ref{MAINdetailed}.

\end{proof}

\begin{example}\label{ex::notSoBigExample}
Let $\xi=(1,0,165)$. Then $M(\xi)\cong \PP^{2[165]}$ is the Hilbert scheme of $n=165$ points on the plane. In this case, the controlling exceptional slope is $\alpha\cdot \beta =\tfrac{83}{5}$ and Riemman-Roch implies that the pairing $(\xi, e_{\alpha\cdot \beta})>0$. Hence, we can determine the syzygies' behavior of a general $Z\in\PP^{2[165]}$ using Theorem \ref{MAINdetailed}, Part (a).

\medskip\noindent
If $U\in M(\xi)$ is general, then by Theorem \ref{MAINdetailed} the Gaeta resolution of $U$ is $E_{-(\alpha\cdot \beta)}$-admissible and the residual complex has $\OO_{\PP^2}(-17)^2$ as a factor. That is, it can be written as 
$$0\xrightarrow{\hspace{7mm}} \mathcal{O}_{\PP^2}(-19)^{10} \overset{M}{\xrightarrow{\hspace{7mm}}} \mathcal{O}_{\PP^2}(-18)^{3} \oplus \mathcal{O}_{\PP^2}(-17)^{8}\xrightarrow{\hspace{7mm}} U \xrightarrow{\hspace{7mm}} 0,\text{ with}$$
\[M=\left(
\begin{array}{c|c}
0  & A \\ \hline
 B & C 
\end{array}\right),\]
where $B$ can be thought of as the matrix in the minimal free resolution of $E_{-\frac{83}{5}}$ and the residual complex is
$$\mathcal{R}_{\bullet}:\quad  \OO_{\PP^2}(-19)^9\overset{A}{\xrightarrow{\hspace{7mm}}} \OO_{\PP^2}(-18)^3\oplus \OO_{\PP^2}(-17)^2. $$
This complex is also a mapping cone according to Theorem \ref{MAINdetailed}, which makes the generalized Gaeta resolution of $U$
$$0\xrightarrow{\hspace{7mm}} \mathcal{T}_{\PP^2}(-21)^3 \xrightarrow{\hspace{7mm}} E_{-\frac{83}{5}}\oplus \mathcal{O}_{\PP^2}(-17)^2 \xrightarrow{\hspace{7mm}} U \xrightarrow{\hspace{7mm}} 0.$$ 
\end{example}

\medskip\noindent
This example exhibits that it can be difficult to guess the $F$-admissibility of a sheaf by simply looking at the Betti table of its Gaeta resolution.

\begin{rmk}\label{residual1}
The kernel $K:=ker(A)=\mathcal{H}^{-1}(\mathcal{R}_{\bullet})$ is a torsion-free sheaf of rank 4, and more importantly, is \textit{Mumford-stable}. In fact, its moduli space has dimension $78$ and it is the last model of $\PP^{2[165]}$, when the MMP is run on movable divisors. In \cite[\S 7]{Hui14}, the map ${Z}\mapsto K$ is called the Kronecker fibration.
\end{rmk}

\medskip
\subsection{The negative case: $\chi(E_{-(\alpha\cdot \beta)}, U)<0$.}\label{Negativecase}
In this subsection, we outline the proof of Theorem \ref{MAINdetailed} Part (b) as the arguments are similar to those in the previous subsection.

\begin{proof}[Proof of Theorem \ref{MAINdetailed} Part (b)] 
Let $U$ be a sheaf in $M(\xi)$ with controlling exceptional bundle $E_{-\alpha\cdot \beta}$. Set 
$m_1=-\chi(E_{-(\alpha\cdot\beta)},U)$,  $m_2=\chi(E_{-\beta},U)$, and $m_3=\chi(E_{-((\alpha\cdot\beta)\cdot\beta)},U)$.

We aim to show that if $\chi(E_{-(\alpha\cdot\beta)},U)<0$, then $U$   admits a resolution as in  \eqref{res2} if and only if it is $E_{-\alpha.\beta-3}^{m_1}$-residual and the residual complex is an $(E_{-\beta}^{m_2},E_{-\alpha-3}^{m_3})$-cone.

The proof is very similar that of Theorem \ref{MAINdetailed}, Part (a). We outline it below and omit the details as no difficulties arise. 

Let us argue the sufficiency direction. Assume $U\in M(\xi)$ lies in the Gaeta triangle 
\[  W_\bullet \xrightarrow{\hspace{7mm}} U\xrightarrow{\hspace{7mm}} F\xrightarrow{\hspace{7mm}} \cdot \]
induced by \eqref{res2} where
$F$ is the sheaf $E_{-(\alpha \cdot \beta)-3}^{m_1}$ considered in degree $-1$ and $W_\bullet$ is the following 2-term complex in the derived category $D^b(\PP^2)$ in degrees $-1$ and $0$, \[W_{\bullet}: \quad E_{-3-\alpha}^{m_3} \xrightarrow{\hspace{5mm}} E_{-\beta}^{m_2}.\]
We now proceed analogously to the proof of Theorem \ref{MAINdetailed}, Part (a) in the following steps.
\begin{enumerate}
    \item 
    The Gaeta resolutions yields line-bundle complexes $H_\bullet$ and $G[-1]_\bullet$ (in degrees $-1,0$ and $0,1$ respectively),  equivalent to $E_{-\beta}^{m_2}$ and $E_{-3-\alpha}^{m_3}$ sitting in degree $0$, respectively. 
    \item Lemma \ref{derivedHom} lifts the map in $W_\bullet$ to a map  $G[-1]_\bullet \xrightarrow{\hspace{3mm}} H_\bullet$ in the category of complexes, which realizes $W_\bullet$ as an $(E_{-\beta}^{m_2},E_{-\alpha-3}^{m_3})$-cone sitting in degrees $-1,0$.
    \item The Gaeta resolution of $E_{\alpha.\beta+3}^{m_1}$ yields a line-bundle complex  $F_\bullet$ sitting in degrees $-1,0$ equivalent to $E_{-\alpha.\beta-3}^{m_1}$ sitting in degree $-1$.
    \item Lemma \ref{derivedHom} lifts the map $F[-1]_\bullet \xrightarrow{\hspace{3mm}} W_\bullet$ to a map between complexes of line bundles in the category of complexes. The mapping cone of this map is a complex of line bundles $U_\bullet$ equivalent to $U$ in the derived category.
    \item By direct computation, $U_\bullet$ is the minimal free resolution of $U$, so it is $E_{-(\alpha \cdot \beta)-3}^{m_1}$-residual.
\end{enumerate}
The proof of the converse direction similarly follows the proof of Theorem \ref{MAINdetailed}, Part (a). 
\begin{enumerate}
    \item Being $E_{-(\alpha \cdot \beta)-3}^{m_1}$-residual yields a triangle $W \xrightarrow{\hspace{3mm}} U \xrightarrow{\hspace{3mm}} E_{-\alpha\cdot\beta-3}^{m_1}[-1] \xrightarrow{\hspace{3mm}} \cdot$. 
    \item Since $W$ is an $(E_{-\beta}^{m_2},E_{-\alpha-3}^{m_3})$-cone , we can replace $W$ by the complex $E_{-3-\alpha}^{m_3} \xrightarrow{\hspace{3mm}} E_{-\beta}^{m_2}$. 
    \item Finally, this map of complexes lifts to Resolution \eqref{res2}.
\end{enumerate}
\end{proof}

\subsection{Syzygies and base locus} We can now characterize the minimal free resolutions of sheaves in the stable base locus of the primary extremal ray of $\EFF(M(\xi))$. First, we need the following definition.

\begin{definition}\label{DDGaeta}
Let $\mathcal{U}$ denote the set in $M(\xi)$ which parametrizes sheaves that fit into a generalized Gaeta resolution. A sheaf $U\in \mathcal{U}$ is called \textit{Gaeta general.}
The complement of $\mathcal{U}$ is denoted $\mathcal{U}^c$. 
\end{definition}

\medskip\noindent
Theorem \ref{MAINdetailed} implies that $\mathcal{U}$ is an open set and consequently $\mathcal{U}^c$ has codimension at least 1. 

\begin{cor}\label{cor: EffRigid}
Let $M(\xi)$ be a nonempty moduli space of Picard rank $2$, and let $E_{\alpha\cdot \beta}$ be the controlling exceptional bundle of $\xi$.
\medskip
\begin{itemize}
    \item[(a)]  If $D_V$ spans the primary extremal ray of $\EFF(M(\xi))$, then its stable base locus satisfies
    $\mathrm{\textbf{B}}({D_V})\subset \mathcal{U}^c$. 
        \item[(b)] If $\chi(E_{-(\alpha\cdot \beta)},U)=0$, then $\mathcal{U}^c\subset M(\xi)$ is an irreducible, reduced divisor which spans an extremal ray of $\mathrm{Eff}(M(\xi))$.  
Moreover, it is the stable base locus of the primary extremal chamber of $\mathrm{Eff}(M(\xi))$.

\end{itemize}
\end{cor}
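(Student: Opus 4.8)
\textbf{Proof strategy for Corollary \ref{cor: EffRigid}.}

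The plan is to leverage Theorem \ref{MAINdetailed} to translate a geometric statement about stable base loci into a statement about minimal free resolutions, and then invoke the interpolation results of \cite{CHW} to pin down extremality. For part (a), I would argue as follows. The divisor $D_V$ spanning the primary extremal ray is a Brill--Noether divisor attached to a bundle $V$ solving the interpolation problem for a Gaeta-general sheaf, and by \cite[\S 5]{CHW} the relevant $V$ is built from the controlling exceptional data $E_{\alpha\cdot\beta}$; concretely, $D_V$ is defined by the non-vanishing of a cohomology group $h^1(U\otimes V)\ne 0$ where $V$ is (a twist of) an exceptional bundle in the collection $\{E_\alpha, E_{\alpha\cdot\beta}, E_\beta\}$. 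A Gaeta-general sheaf $U\in\mathcal U$ admits one of the generalized Gaeta resolutions \eqref{res1}--\eqref{res2}; by Theorem \ref{MAINdetailed}, this is equivalent to the minimal free resolution having the appropriate mapping-cone (admissible/residual/cone) structure. Applying $\Hom(V,-)$ (or $-\otimes V$) to the generalized Gaeta sequence and using numerical/cohomological orthogonality of $V$ with $E_{-\beta}$, $E_{-\alpha-3}$, $E_{-(\alpha\cdot\beta)}$ — exactly the orthogonality that the construction in \cite{CHW} guarantees — one reads off that $h^1(U\otimes V)=0$, i.e.\ $U\notin D_V$. More strongly, a standard semicontinuity and deformation argument (the complement of $D_V$'s stable base locus is where $V$ gives a non-trivial section of $mD_V$ not vanishing at $U$) shows that every $U$ outside $\mathbf B(D_V)$ can be taken to be Gaeta-general in the relevant sense, so $\mathbf B(D_V)\subseteq\mathcal U^c$.

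For part (b), in the balanced case $\chi(E_{-(\alpha\cdot\beta)},U)=0$ the generalized Gaeta resolution is the pure two-term exceptional sequence \eqref{exceptionalgaetatriangle}, and Theorem \ref{MAINdetailed}(c) says $U\in\mathcal U$ exactly when $U_\bullet$ is an $(E_{-\beta}^{l},E_{-\alpha-3}^{k})$-cone, which (by the matrix description following Definition \ref{generalgaeta}) holds exactly when the submatrix $B$ of $M$ has maximal rank as a map of sheaves, i.e.\ when $B$ resolves $E_{-\beta}^{l}$. Thus $\mathcal U^c$ is cut out inside $M(\xi)$ by the degeneracy condition that this maximal minor (equivalently, the induced map $E_{-\alpha-3}^k\to E_{-\beta}^l$) drops rank. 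I would compute the expected codimension of this degeneracy locus via the standard formula for determinantal loci and show it equals $1$ (using that $M(\xi)$ is $\mathbb Q$-factorial of the expected dimension and that the Betti numbers are forced by $\xi$); irreducibility and reducedness then follow from a Bertini-type / generic-smoothness argument on the parameter space of such matrices, as the degeneracy locus of a generic bundle map $E_{-\alpha-3}^k\to E_{-\beta}^l$ between the relevant exceptional bundles is irreducible and generically reduced of the expected codimension. Finally, extremality of the ray spanned by $\mathcal U^c$ comes from part (a) together with \cite[\S 5]{CHW}: $D_V$ spans the primary extremal ray of $\EFF(M(\xi))$, $\mathbf B(D_V)\subseteq\mathcal U^c$, and conversely $\mathcal U^c$ is contained in every member of $|mD_V|$ arising from the canonical section (since a sheaf in $\mathcal U^c$ necessarily has $h^1(U\otimes V)\ne 0$, being a degeneration that fails the orthogonality), so $\mathcal U^c=\mathbf B(D_V)$ and it is the stable base locus of the primary extremal chamber. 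Since $D_V$ is extremal and effective and its stable base locus is an honest divisor, that divisor class is itself extremal in $\EFF(M(\xi))$.

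The main obstacle I anticipate is the codimension/irreducibility claim in part (b): showing that the failure of $U_\bullet$ to be an $(E_{-\beta}^{l},E_{-\alpha-3}^{k})$-cone is \emph{exactly} codimension one and cuts out an irreducible reduced divisor, rather than something of higher codimension or with several components. This requires understanding the incidence between the space of minimal free resolutions with fixed Betti table and the degeneracy stratification of the relevant matrix of forms, and verifying that the generic such degeneration still corresponds to a point of $M(\xi)$ (i.e.\ that a semistable sheaf with the degenerate resolution exists and deforms correctly). The orthogonality computations in part (a) are routine given the setup in Section \ref{subsec: euler} and \cite{CHW}, and the reduction "outside $\mathbf B(D_V)$ $\Rightarrow$ Gaeta-general" is essentially Theorem \ref{MAINdetailed} combined with semicontinuity, so the delicate point is genuinely the determinantal geometry underlying the divisor $\mathcal U^c$.
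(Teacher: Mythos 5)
Part (a) of your proposal is essentially the paper's argument: Gaeta-general sheaves are cohomologically orthogonal to the interpolating bundle, hence lie outside some effective divisor of class $D_V$, giving $\mathbf{B}(D_V)\subset\mathcal U^c$. But your ``more strongly'' claim --- that every $U$ outside $\mathbf{B}(D_V)$ is Gaeta-general, i.e.\ that the inclusion is an equality --- is false: the paper points out immediately after the corollary that the inclusion is strict (for $9$ points, complete intersections of two cubics lie in $\mathcal U^c$ but not in $\mathbf{B}(D_V)$). Fortunately that claim is not needed; the first half of your argument already yields the stated inclusion.

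Part (b) is where the real gaps are. First, irreducibility and reducedness of $\mathcal U^c$: the paper's route is to observe that, by Theorem \ref{MAINdetailed}(c), in the case $\chi(E_{-(\alpha\cdot\beta)},U)=0$ the locus $\mathcal U^c$ \emph{coincides} with the Brill--Noether divisor $D_{E_{\alpha\cdot\beta}}$, and then to quote \cite[Proposition 5.9]{CHW} for extremality and \cite[Theorem 7.3]{CHW} for irreducibility and reducedness. Your alternative --- cutting out $\mathcal U^c$ as a determinantal degeneracy locus and running an expected-codimension plus Bertini argument --- is not carried out; you yourself flag it as the main obstacle, and it is genuinely delicate because the locus where the resolution fails to be an $(E_{-\beta}^l,E_{-\alpha-3}^k)$-cone is controlled by jumping of $\hom$ and $\mathrm{ext}^1$ against the controlling exceptional bundle, not by a single generic matrix of forms. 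Second, the claim that $\mathcal U^c$ is the stable base locus of the primary extremal \emph{chamber} is not established by showing that one canonical section of $|mD_V|$ vanishes on $\mathcal U^c$: the stable base locus is the intersection over \emph{all} members of all multiples, so you must rule out other sections missing $\mathcal U^c$. The paper does this via \cite[Theorem 6.4]{CHW}: the birational morphism to the Picard-rank-one moduli space of Kronecker modules contracts a divisor, which is therefore the stable base locus of the chamber, and by extremality and irreducibility that contracted divisor must be $\mathcal U^c$. This Kronecker-fibration step is absent from your proposal and is the ingredient that actually pins down the stable base locus.
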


\begin{proof}
(a) It suffices to show that if a sheaf $U$ is Gaeta general, then it is not in the base locus of $D_V$, where $D_V$ spans the primary extremal ray of $\EFF(M(\xi))$. Clearly, if $U$ is Gaeta general, then it admits a resolution as in \eqref{res1} or \eqref{res2}, hence it is in the complement of some Brill-Noether divisor with class $D_V$. 
Therefore, it is not in the stable base locus of $D_V$.

\medskip\noindent
(b) By Theorem \ref{MAINdetailed}, Part (c), the locus $\mathcal{U}^c$ coincides with the Brill-Noether divisor $D_{E_{\alpha.\beta}}$, which spans the primary edge of the effective cone $\EFF(M(\xi))$ \cite[Proposition 5.9]{CHW} and is irreducible and reduced \cite[Theorem 7.3]{CHW}.

By \cite[Theorem 6.4]{CHW}, the map to the moduli space of Kronecker modules, which has Picard rank 1, is birational. Hence, it contracts a divisor which must be the base locus of the primary extremal chamber of $\EFF(M(\xi))$. This base locus must be $\mathcal{U}^c$ since it is extremal in the effective cone and irreducible.
\end{proof}

\medskip\noindent
The inclusion in Part (a) of the previous corollary is strict. For example, consider the Hilbert scheme of 9 points on the plane. The locus of 9 points which are a complete intersection of two cubics is contained in $\mathcal{U}^c$ but fails to be in $\textbf{B}({D_V})$. 

\medskip

\section{Proof of Theorem \ref{theoremB}}
\label{sec: mov}

\noindent
In this section, we provide a new computation of the cone of movable divisors $\MOV(\PP^{2[n]})$ following the \textit{base locus decomposition program} when the Gaeta resolution is pure. This is the only case in which the general element of the nontrivial extremal divisor of $\EFF(\PP^{2[n]})$ is has different Betti numbers than the generic sheaf in $\PP^{2[n]}$; that is our motivation for revisiting it. 

\medskip\noindent
Recall that an integer of the form $n=\binom{d+1}{2}$ or $n=4\binom{d+1}{2}$ is called a \textit{triangular} or \textit{tangential} number, respectively.
We first show these are the only cases when the Gaeta resolution is pure.

\begin{prop}
Let $Z\in \PP^{2[n]}$ be general. Then, the minimal free resolution of $\mathcal{I}_Z$ is pure if and only if
$n$ is a triangular or tangential number. In this case, the generalized Gaeta resolution coincides with the Gaeta resolution.
\end{prop}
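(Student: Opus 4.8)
The plan is to pin down the Betti numbers of a general ideal sheaf $\mathcal{I}_Z$ using the Gaeta resolution recipe from Section~\ref{GaetaRes}, and then determine exactly when only two distinct line bundles appear. Let $d$ be the least integer with $h^0(\mathbb{P}^2,\mathcal{I}_Z(d))>0$, equivalently $\binom{d+1}{2}<n\le\binom{d+2}{2}$. Writing $n=\binom{d+2}{2}-r$ with $0\le r<d+1$, one computes $n_1=\chi(\mathcal{O}(-d),\mathcal{I}_Z)=\binom{d+2}{2}-n=r$ (number of degree-$d$ generators), and similarly the other two exponents $n_2=\chi(\mathcal{T}_{\PP^2}(-d-1),\mathcal{I}_Z)$ and $n_3=\chi(\mathcal{O}(-d+1),\mathcal{I}_Z)$ as explicit quadratics in $d$ and $r$ (or $n$). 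The resolution is pure precisely when exactly one of $n_2,n_3$ vanishes (so that only two of the three line bundles $\mathcal{O}(-d),\mathcal{O}(-d-1),\mathcal{O}(-d-2)$ actually occur), and one must rule out the degenerate possibility that both vanish or that the middle term collapses incorrectly.

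First I would handle the case $n_3=-\chi(\mathcal{O}(-d+1),\mathcal{I}_Z)=0$: this says $h^0(\mathcal{I}_Z(d-1))=h^1(\mathcal{I}_Z(d-1))=0$ for general $Z$, i.e.\ $n=\binom{d+1}{2}$ exactly — a triangular number — and then the Gaeta resolution reads $0\to\mathcal{O}(-d-1)^{n_2}\to\mathcal{O}(-d)^{n_1}\to\mathcal{I}_Z\to0$, which is pure. Second, the case $n_2=\chi(\mathcal{T}_{\PP^2}(-d-1),\mathcal{I}_Z)=0$: expanding via Riemann–Roch, $n_2=0$ becomes a linear Diophantine condition in $n$ and $d$ that solves to $n=d(d+1)$... more precisely one should get $n=4\binom{d'+1}{2}$ after the right reindexing (note the doubling between the two forms the theorem names: triangular $n=\tfrac12 d(d+1)$ versus tangential $n=2d(d+1)$), and then the resolution is $0\to\mathcal{O}(-d-2)^{-n_3}\to\mathcal{O}(-d-1)^{0}\oplus\mathcal{O}(-d)^{n_1}\to\mathcal{I}_Z\to0$, again pure. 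For the converse, a pure resolution forces one of these two exponents to be zero, hence $n$ triangular or tangential.

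The main obstacle I anticipate is the bookkeeping with the interval conventions: which half-integer interval the relevant exceptional slope lands in, whether $\chi(\mathcal{T}_{\PP^2}(-d-1),\mathcal{I}_Z)$ is $\ge0$ or $\le0$ (this switches which of the two Gaeta resolution shapes in Section~\ref{GaetaRes} applies and hence whether $\mathcal{O}(-d-1)$ sits among the generators or the relations), and confirming that for a \emph{general} $Z$ all the higher cohomology that is supposed to vanish actually does (so that the $\chi$'s equal the honest Betti numbers). This is where I would invoke that the Hilbert function of a general set of points is as uniform as numerically possible, together with the fact that a general ideal sheaf has the expected (minimal) Betti numbers — the Minimal Resolution Conjecture on $\mathbb{P}^2$ is a theorem (work of Gaeta, and Ellingsrud–Peskine–Hirschowitz / Cho–Kim–Park–Walter type results), which rules out any "ghost" cancellation. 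Once the cohomology vanishing is in hand, the final sentence — that in the triangular/tangential case the generalized Gaeta resolution coincides with the Gaeta resolution — is immediate: purity means only two distinct line bundles appear, so the controlling exceptional $E_{\alpha\cdot\beta}$ must itself be a line bundle, and then Resolutions \eqref{res1}–\eqref{res2} literally specialize to the line-bundle Gaeta resolution as already remarked after \eqref{res2}.
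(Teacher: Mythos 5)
Your main argument is essentially the paper's: compute the three Gaeta exponents $n_1,n_2,n_3$ as Euler characteristics, note that $n_1=\binom{d+2}{2}-n>0$ never vanishes, and identify $n_3=0$ with $n=\binom{d+1}{2}$ (triangular) and $n_2=0$ with the tangential values. Two small points of hygiene: the correct interval is $\binom{d+1}{2}\le n<\binom{d+2}{2}$, not $\binom{d+1}{2}<n\le\binom{d+2}{2}$ (with your convention the boundary case $n=\binom{d+1}{2}$, which is exactly the triangular case, lands in the wrong bucket — though your later identification of $n_3=0$ uses the right convention anyway); and the equation $n_2=0$ reads $n=\tfrac{1}{2}d(d+2)$, not $n=d(d+1)$, which is what forces $d=2d'$ even and yields $n=4\binom{d'+1}{2}$ as you ultimately assert.

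The genuine error is in your justification of the final sentence. Purity of the minimal free resolution does \emph{not} imply that the controlling exceptional bundle $E_{\alpha\cdot\beta}$ is a line bundle: in the tangential case it is a twist of the tangent bundle (rank $2$) — for instance, for $n=12$ it is $\mathcal{T}_{\PP^2}(2)$, and the pure Gaeta resolution $\OO_{\PP^2}(-6)^2\to\OO_{\PP^2}(-4)^3$ involves non-consecutive twists, which already signals that the relevant exceptional collection is $\{\OO_{\PP^2}(-d-2),\OO_{\PP^2}(-d),\mathcal{T}_{\PP^2}(-d-1)\}$ rather than three line bundles. The correct reason the generalized Gaeta resolution still coincides with the Gaeta resolution is that the exponent of the rank-two member of that collection is precisely $\chi(\mathcal{T}_{\PP^2}(-d-1),\mathcal{I}_Z)=n_2$, which is the quantity you have just shown to vanish; equivalently, one is in the case of Resolution \eqref{exceptionalgaetatriangle}, where only the two line-bundle members of the collection appear. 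In the triangular case the controlling exceptional genuinely is a line bundle and your reasoning is fine.
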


\begin{proof}
Let $n$ be a positive integer, and let $d$ be the minimal integer such that $n$ general points lie on a curve of degree $d$. 
By Riemann-Roch, $d$ is the unique integer such that
$$\binom{d+1}{2}\leq n<\binom{d+2}{2}.$$
The Beilinson spectral sequence with the triad $\{\mathcal{O}_{\PP^2}(-d-2),\mathcal{O}_{\PP^2}(-d-1),\mathcal{O}_{\PP^2}(-d)\}$
yields the minimal free resolution of a general sheaf $Z \in \mathbb{P}^{2[n]}$ which has one of the forms 
\[0 \xrightarrow{\hspace{7mm}} \mathcal{O}_{\PP^2}(-d-2)^{-n_3} \oplus \mathcal{O}_{\PP^2}(-d-1)^{n_2}\xrightarrow{\hspace{7mm}} \mathcal{O}_{\PP^2}(-d)^{n_1} \xrightarrow{\hspace{7mm}} \mathcal{I}_Z \xrightarrow{\hspace{7mm}} 0\text{ or} \]

\[0 \xrightarrow{\hspace{7mm}}\mathcal{O}_{\PP^2}(-d-2)^{-n_3} \xrightarrow{\hspace{7mm}} \mathcal{O}_{\PP^2}(-d-1)^{-n_2} \oplus \mathcal{O}_{\PP^2}(-d)^{n_1} \xrightarrow{\hspace{7mm}} \mathcal{I}_Z \xrightarrow{\hspace{7mm}} 0,\]
where $$n_3=\chi(\mathcal{O}_{\PP^2}(-d+1),\mathcal{I}_Z),\quad n_2=\chi(\mathcal{T}(-d-1),\mathcal{I}_Z), \quad\text{and} \quad  n_1=\chi(\mathcal{O}_{\PP^2}(-d),\mathcal{I}_Z).$$

\medskip\noindent
Therefore, the Gaeta resolution above is pure if and only if some $n_i=0$. Using Riemann-Roch, we see that $n_1=\binom{d+2}{2}-n>0$ is never zero. Moreover:
\begin{enumerate}[(1)]
    \item $n_2=0$ if and only if $d=2d'$ and $n=4\binom{d'+1}{2}$, which is a tangential number.
    \item $n_3=0$ if and only if $n=\binom{d+1}{2}$, which is a triangular number.
\end{enumerate}
Finally, when $n$ is a triangular number, the exceptional collection above yields the generalized Gaeta resolution. When $n$ is a tangential number, the exceptional collection that yields the generalized Gaeta resolution is $\{\OO_{\PP^2}(-d-2), \OO_{\PP^2}(-d), \mathcal{T}_{\PP^2}(-d-1)\}$ and the power of $\mathcal{T}_{\PP^2}(-d-1)$ is 0. Therefore, the generalized Gaeta resolution coincides with the Gaeta resolution in both cases.
\end{proof}

\medskip\noindent
\begin{remark}\label{DivGaeta}
The previous proposition and Corollary \ref{cor: EffRigid} imply that if $n$ is a triangular or tangential number, then $\mathcal{U}^c$ (Definition \ref{DDGaeta}) is an irreducible divisor. Furthermore, there exists a unique \textit{divisorial} Betti table. That is, the sheaves with that Betti table form an open set of $\mathcal{U}^c$. Indeed,
if $d>2$ is a triangular number, then the general sheaf $Z\in \mathcal{U}^c$ has minimal free resolution 
\begin{equation*} 
0\xrightarrow{\hspace{7mm}}\OO_{\PP^2}(-d-2)\oplus\OO_{\PP^2}(-d-1)^{d-3}\xrightarrow{\hspace{7mm}}\OO_{\PP^2}(-d)^{d-2}\oplus\OO_{\PP^2}(-d+1)\xrightarrow{\hspace{7mm}} \mathcal{I}_Z \xrightarrow{\hspace{7mm}} 0.
\end{equation*}
A dimension count (see \cite{BS}) shows that the family of sheaves with this resolution has dimension $2n-1$.
If $n$ is a tangential number, then the general sheaf $\mathcal{I}_Z\in\mathcal{U}^c$ has minimal free resolution
\begin{equation}\label{eq::dBettiTangential}
0\xrightarrow{\hspace{7mm}}\OO_{\PP^2}(-2d-2)^d\oplus\OO_{\PP^2}(-2d-1)\xrightarrow{\hspace{7mm}}\OO_{\PP^2}(-2d-1)\oplus\OO_{\PP^2}(-2d)^{d+1}\xrightarrow{\hspace{7mm}} \mathcal{I}_Z \xrightarrow{\hspace{7mm}} 0.
\end{equation}
    Again, a dimension count shows that the family of sheaves with this resolution has dimension $2n-1$. Summarizing, if $Z\in \PP^{2[n]}$ is a general element in the divisor $\mathcal{U}^c$, where $n$ is a triangular or tangential number respectively, then its Betti table is:
$$
\begin{array}{c|cc}
&1&2 \\
\hline 
 d-2&\mathbf{1}& \\
 d-1&d-2&d-3\\
 d& &\mathbf{1}
 \end{array}\quad  \quad  \quad 
 \begin{array}{c|cc}
&1&2 \\
\hline 
 2d-1&d+1&\mathbf{1}\\
 2d&\mathbf{1}&d\\
 \end{array}
$$
\end{remark}

\begin{definition}
  If $n$ is a triangular or a tangential number, then we denote $D_{\tiny{Betti}}:=\mathcal{U}^c\subset \PP^{2[n]}$.
\end{definition}

\medskip\noindent
It follows from \cite[Theorem E]{LZ} that the destabilizing object that yields the primary extremal wall of $\MOV(\PP^{2[n]})$ is $\OO_{\PP^2}(-d+1)$, in the triangular case $n=\binom{d+1}{2}$, and $\mathcal{T}_{\PP^2}(-2d-1)$, in the tangential case $n=4\binom{d+1}{2}$. The minimal free resolutions of both destabilizing objects are contained in the respective minimal free resolutions of a general $Z\in D_{Betti}$ above. Summarizing:

\begin{prop}\label{prop::destObjMov}
    The minimal free resolution of a general $Z\in D_{Betti}$ contains the minimal free resolution of the Bridgeland destabilizing objects that induce the extremal wall of the movable cone.
\end{prop}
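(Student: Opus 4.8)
The statement is essentially a bookkeeping observation once the divisorial Betti tables of Remark \ref{DivGaeta} are in hand, so the plan is to read off the containment directly from those explicit resolutions. First I would recall from \cite[Theorem E]{LZ} (as quoted just before the proposition) that the destabilizing object inducing the primary extremal wall of $\MOV(\PP^{2[n]})$ is $\OO_{\PP^2}(-d+1)$ in the triangular case $n=\binom{d+1}{2}$ and $\mathcal{T}_{\PP^2}(-2d-1)$ in the tangential case $n=4\binom{d+1}{2}$. The minimal free resolution of $\OO_{\PP^2}(-d+1)$ is trivial: it is the complex with the single term $\OO_{\PP^2}(-d+1)$ in homological degree $0$ and an empty matrix $B$ of size zero. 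The minimal free resolution of $\mathcal{T}_{\PP^2}(-2d-1)$ is the twisted Euler sequence, which presents it via
\[
0\xrightarrow{\hspace{5mm}}\OO_{\PP^2}(-2d-2)\xrightarrow{\hspace{5mm}}\OO_{\PP^2}(-2d-1)^{3}\xrightarrow{\hspace{5mm}}\mathcal{T}_{\PP^2}(-2d-1)\xrightarrow{\hspace{5mm}}0.
\]
So the matrix $B$ in this case is the $3\times 1$ matrix of linear forms $(x_0,x_1,x_2)^{T}$ (up to change of basis).

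Next I would compare these with the divisorial Betti tables in Remark \ref{DivGaeta}. In the triangular case, the general $Z\in D_{Betti}$ has resolution
\[
0\xrightarrow{\hspace{4mm}}\OO_{\PP^2}(-d-2)\oplus\OO_{\PP^2}(-d-1)^{d-3}\xrightarrow{\hspace{4mm}}\OO_{\PP^2}(-d)^{d-2}\oplus\OO_{\PP^2}(-d+1)\xrightarrow{\hspace{4mm}}\mathcal{I}_Z\xrightarrow{\hspace{4mm}}0,
\]
which has $\OO_{\PP^2}(-d+1)$ as one of its generators with no syzygy mapping into that summand (the matrix entries above the corresponding zero-size block vanish for degree reasons, since $d_{2,i}\in\{d+1,d+2\}$ and $d+1>d-1$ strictly, so there is no nonzero homogeneous form of degree $d_{2,i}-(d-1)<2$... in fact the relevant degrees force the block to be zero). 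In the language of Section \ref{section2.4}, this exhibits $(\mathcal{I}_Z)_\bullet$ as containing the (zero-matrix) minimal free resolution of $\OO_{\PP^2}(-d+1)$, i.e.\ $\mathcal{I}_Z$ is $\OO_{\PP^2}(-d+1)$-admissible. In the tangential case, the general $Z\in D_{Betti}$ has resolution \eqref{eq::dBettiTangential}; I would identify the sub-block consisting of the $d$ summands $\OO_{\PP^2}(-2d-2)$ mapping into $d$ of the $d+1$ summands $\OO_{\PP^2}(-2d)$ — this sub-block is, up to row and column reduction and change of basis on $\PP^2$, exactly the Euler-sequence matrix for a direct sum of copies of $\mathcal{T}_{\PP^2}(-2d-1)$, and the entries of the full matrix lying "above" this block vanish by the degree/row-reduction analysis of Section \ref{section2.4} (the leftover $\OO_{\PP^2}(-2d-1)\to\OO_{\PP^2}(-2d-1)$ generator–relation pair cancels minimally and the remaining entries into the $\OO_{\PP^2}(-2d)$ rows can be cleared). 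Hence $(\mathcal{I}_Z)_\bullet$ contains the minimal free resolution of $\mathcal{T}_{\PP^2}(-2d-1)^{\oplus}$, so $\mathcal{I}_Z$ is $\mathcal{T}_{\PP^2}(-2d-1)$-admissible.

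The only genuine content beyond arithmetic is verifying that the relevant sub-matrix genuinely resolves a twist of the tangent bundle (resp.\ is the trivial resolution of a line bundle) for a \emph{general} element of the divisor $D_{Betti}$, rather than some degenerate sheaf with the same Betti numbers; I expect this to be the main obstacle. Here I would invoke Corollary \ref{cor: EffRigid} and the dimension count in Remark \ref{DivGaeta}: $D_{Betti}=\mathcal{U}^c$ is irreducible of dimension $2n-1$, and the sheaves whose resolution has the prescribed block \emph{of full rank} (i.e.\ whose linear block resolves $\mathcal{T}_{\PP^2}(-2d-1)^{\oplus}$, or whose $\OO_{\PP^2}(-d+1)$ generator splits off) form an open dense subset, precisely as in the analysis following Theorem \ref{MAINdetailed} and Example \ref{eg1}. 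Thus for a general $Z\in D_{Betti}$ the block is full rank and the containment of minimal free resolutions holds, completing the proof. I would also remark that this containment is exactly the mapping-cone structure predicted by Conjecture \ref{conj} at the movable-cone wall, tying the statement back to the pattern of Theorems A and B.
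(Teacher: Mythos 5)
Your overall strategy --- reading the containment directly off the divisorial Betti tables of Remark \ref{DivGaeta}, treating the triangular case as trivial, and invoking generality of $Z\in D_{Betti}$ (via irreducibility of $\mathcal{U}^c$) to put the relevant block in standard form --- is the same as the paper's, and your triangular case is correct. The tangential case, however, contains a genuine error. First, the minimal free resolution of $\mathcal{T}_{\PP^2}(-2d-1)$ is the Euler sequence twisted by $-2d-1$, namely $0\to\OO_{\PP^2}(-2d-1)\to\OO_{\PP^2}(-2d)^{3}\to\mathcal{T}_{\PP^2}(-2d-1)\to 0$; the sequence you display, $\OO_{\PP^2}(-2d-2)\to\OO_{\PP^2}(-2d-1)^{3}$, presents $\mathcal{T}_{\PP^2}(-2d-2)$. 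Second, and more seriously, the sub-block of Resolution \eqref{eq::dBettiTangential} you propose to identify with the Euler matrix --- the $d$ summands $\OO_{\PP^2}(-2d-2)$ mapping into copies of $\OO_{\PP^2}(-2d)$ --- has entries of degree $2$, so no amount of row and column reduction or change of coordinates can turn it into a matrix of linear forms resolving a sum of copies of $\mathcal{T}_{\PP^2}(-2d-1)$; that block simply is not the destabilizing subcomplex.

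The correct subcomplex is the single column of $M$ corresponding to the relation $\OO_{\PP^2}(-2d-1)$ in the source of \eqref{eq::dBettiTangential}: its entry into the generator $\OO_{\PP^2}(-2d-1)$ is a constant and hence vanishes by minimality, and its remaining $d+1\ge 3$ entries into $\OO_{\PP^2}(-2d)^{d+1}$ are linear forms which, for $Z$ general in $D_{Betti}$, span the full $3$-dimensional space of linear forms; after row operations they become $(x_0,x_1,x_2,0,\dots,0)^{T}$, exhibiting the Euler presentation $\OO_{\PP^2}(-2d-1)\to\OO_{\PP^2}(-2d)^{3}$ of $\mathcal{T}_{\PP^2}(-2d-1)$ as a contained subcomplex in the sense of Section \ref{section2.4}. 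With this substitution your generality argument goes through and the proof closes as in the paper.
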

\begin{proof}
    The triangular case is trivial. For the tangential case, the part of Resolution \eqref{eq::dBettiTangential} corresponding to the morphism
    $$\OO_{\PP^2}(-2d-1)\xrightarrow{\hspace{5mm}}\OO_{\PP^2}(-2d-1)\oplus\OO_{\PP^2}(-2d)^{d+1}$$
    is zero in the first factor, and given by $d+1\geq3$ linear forms in the second factor. If $Z$ is general in $D_{Betti}$, these linear forms can be brought, after a series of row operations, to a basis of the space of linear forms in the last three entries; and zero otherwise. This yields the resolution of $\mathcal{T}_{\PP^2}(-2d-1)$.
\end{proof}

\medskip\noindent
This proves the second part of Theorem \ref{theoremB}. Let us now show that the syzygies of a general element in $D_{Betti}$ determine the extremal wall of $\MOV(\PP^{2[n]})$.

\subsection{Movable cone of $\PP^{2[n]}$ for $n$ a triangular number.}  This subsection proves Theorem \ref{TRI} below, which concludes the proof of Part (a) of Theorem \ref{theoremB}. 

\medskip\noindent
Our description of the non-trivial extremal divisor of the movable cone consists of two steps. First, we solve the interpolation problem for a general element in $D_{\tiny{Betti}}$. Second, we show that the induced Brill-Noether divisor in fact spans an extremal ray of $\MOV(\PP^{2[n]})$. We start by addressing the first step, which is the difficult one.

\medskip\noindent
Proposition \ref{prop::destObjMov} asserts that the destabilizing object $\OO_{\PP^2}(-d+1)$ is contained in the minimal free resolution of a general $Z\in D_{\tiny{Betti}}$. This yields a distinguished triangle $\OO_{\PP^2}(-d+1)\xrightarrow{\hspace{3mm}} \mathcal{I}_Z\xrightarrow{\hspace{3mm}}W_\bullet\xrightarrow{\hspace{3mm}}\cdot$ where $W_\bullet[-1]$ is the residual complex. Using the procedure described after Definition \ref{orthogonal}, we find the invariants of a vector bundle $M$ numerically orthogonal to both $\mathcal{I}_Z$ and $\OO_{\PP^2}(-d+1)$, which ultimately solves the interpolation problem for $\mathcal{I}_Z$. First, we show that $M$ is in fact cohomologically orthogonal to $\mathcal{I}_Z$.

\begin{lem}\label{InterTri}
Let $n=\binom{d+1}{2}$ be a triangular number with $d>1$. 
The general bundle with resolution \[0\xrightarrow{\hspace{7mm}} \mathcal{O}_{\PP^2}(d-3)^{kd}\xrightarrow{\hspace{7mm}} \mathcal{O}_{\PP^2}(d-2)^{k(2d-1)}\xrightarrow{\hspace{7mm}} M\xrightarrow{\hspace{7mm}} 0\]
is cohomologically orthogonal to the general point in $D_{\tiny{Betti}}\subset \PP^{2[n]}$ if $k$ is sufficiently large.
\end{lem}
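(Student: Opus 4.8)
The plan is to verify cohomological orthogonality by computing all cohomology groups of $M \otimes \mathcal{I}_Z$ from the two resolutions at hand: the resolution of $M$ given in the statement, and the divisorial Betti resolution of $\mathcal{I}_Z$ from Remark~\ref{DivGaeta}. Since numerical orthogonality $\chi(M \otimes \mathcal{I}_Z) = 0$ already holds by construction (the invariants of $M$ were chosen via the procedure after Definition~\ref{orthogonal}), it suffices to show two of the three cohomology groups vanish; one then gets the third for free. The natural target is to show $h^1(M\otimes \mathcal{I}_Z) = h^2(M \otimes \mathcal{I}_Z)=0$, so that $M$ is effectively orthogonal and the Brill--Noether divisor $D_M$ makes sense and avoids $Z$ in its base locus.

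First I would tensor the resolution $0 \to \mathcal{O}(d-3)^{kd} \to \mathcal{O}(d-2)^{k(2d-1)} \to M \to 0$ with $\mathcal{I}_Z$ and break into the hypercohomology spectral sequence (equivalently, a long exact sequence), reducing everything to the cohomology of $\mathcal{I}_Z(d-2)$ and $\mathcal{I}_Z(d-3)$ twisted up by the ranks $k(2d-1)$ and $kd$. Here $Z$ is a general collection of $n = \binom{d+1}{2}$ points lying on a unique degree-$d$ curve. The key input is that a general such $Z$ has the expected (maximal rank) cohomology in these twists: $h^0(\mathcal{I}_Z(d-2))$ and $h^0(\mathcal{I}_Z(d-3))$ are as small as Riemann--Roch allows, $h^1$ vanishes in the relevant range, and $h^2(\mathcal{I}_Z(m)) = h^2(\mathcal{O}(m)) = 0$ for $m \geq -2$, which covers $m = d-3 \geq -1$ since $d>1$. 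Alternatively, and perhaps more cleanly, I would reverse the roles and tensor the divisorial resolution of $\mathcal{I}_Z$ (with terms $\mathcal{O}(-d-2)\oplus\mathcal{O}(-d-1)^{d-3}$ and $\mathcal{O}(-d)^{d-2}\oplus\mathcal{O}(-d+1)$) with $M$, reducing to the cohomology of $M(-d+1)$, $M(-d)$, $M(-d-1)$, $M(-d-2)$, which in turn follow from the resolution of $M$ and the cohomology of line bundles $\mathcal{O}(-1), \mathcal{O}(-2), \mathcal{O}(-3), \mathcal{O}(-4)$ and $\mathcal{O}(-2),\mathcal{O}(-3),\mathcal{O}(-4),\mathcal{O}(-5)$. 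Many of these vanish outright; the ones that don't get pinned down by the connecting maps.

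The role of "$k$ sufficiently large" is the crux: the connecting maps in these sequences are multiplication-type maps between spaces of sections, and their maximal rank is what forces $h^0$ or $h^1$ of $M \otimes \mathcal{I}_Z$ to vanish. For a single copy ($k=1$) the relevant map between, say, $H^0(\mathcal{O}(d-2)^{2d-1})$-type spaces and the sections coming from the curve need not have maximal rank, but taking $k$ copies and using that $M$ is \emph{general} with its prescribed resolution (so the defining matrix of linear forms is generic), a semicontinuity/specialization argument shows maximal rank holds once the source or target dimension dominates — and the dimensions scale linearly in $k$ while the "defect" is bounded, so large $k$ wins. Concretely I expect to exhibit one explicit $M$ (or degeneration of $Z$ to a configuration, e.g. points on a union of lines, where the maps become block-triangular and visibly full rank) for which orthogonality holds, then conclude for the general member by semicontinuity of $h^i$.

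The main obstacle is precisely this genericity/maximal-rank step: controlling the rank of the multiplication maps $H^0(\mathcal{O}(a)) \otimes (\text{linear forms of the resolution of }M) \to H^0(\mathcal{O}(a+1))$ restricted to the subspaces cut out by $\mathcal{I}_Z$. This is the same flavor of difficulty as in the interpolation arguments of \cite{CHW, Hui14, CH13}, and I would handle it by a careful choice of specialization of both $Z$ and the matrix defining $M$ so that the relevant maps degenerate to ones of manifestly maximal rank (e.g. monomial or block form), then invoke upper-semicontinuity of cohomology to transfer the vanishing to the general pair. The bookkeeping of the ranks $kd$ and $k(2d-1)$ against the Betti numbers of $\mathcal{I}_Z$ is routine Riemann--Roch once the maximal-rank principle is in place.
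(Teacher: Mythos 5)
Your setup is sound as far as it goes: $\chi(M\otimes\mathcal{I}_Z)=0$ is indeed automatic, $h^2$ is easy, and tensoring the divisorial resolution of $\mathcal{I}_Z$ with $M$ reduces everything to one connecting map. But if you carry out that reduction you find that the map whose injectivity you need, namely $H^1(M(-d-2))\oplus H^1(M(-d-1))^{d-3}\to H^1(M(-d))^{d-2}$, goes between spaces of \emph{equal} dimension $kd(d-2)$ for every $k$. So the heuristic you lean on — ``dimensions scale linearly in $k$ while the defect is bounded, so large $k$ wins'' — does not apply: you need a square matrix to be invertible, and enlarging $k$ gives no dimension slack. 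This invertibility \emph{is} the interpolation problem for $Z\in D_{\tiny{Betti}}$; announcing that a suitable specialization of $Z$ and of the matrix of $M$ will make it block-triangular of full rank names the obstacle without overcoming it, and no candidate specialization is produced. You have also misidentified where the hypothesis ``$k$ sufficiently large'' enters.

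The paper's proof takes a different route that supplies exactly the missing ingredient. From the resolution of $M$ one sees $M(-d+1)$ has no cohomology, so restriction to any curve $C$ of degree $d-1$ gives $H^0(\PP^2,M)\cong H^0(C,M|_C)$. Taking $C$ to be the image of a general degree-$(d-1)$ map $f:\PP^1\to\PP^2$, Huizenga's restriction theorem \cite[Theorem 1.8]{Hui13} — and this is precisely where ``$k$ sufficiently large'' is used — gives the balanced splitting $f^*M\cong\mathcal{O}_{\PP^1}(d^2-2d+2)^{k(d-1)}$. Choosing $Z$ to consist of the $\binom{d-2}{2}$ nodes of $C$ together with $3d-3$ further general points of $C$ (a point of $D_{\tiny{Betti}}$), one computes $\deg f^*Z=d^2-2d+3$, so sections of $M|_C$ vanishing on $Z$ pull back to sections of $\mathcal{O}_{\PP^1}(-1)^{k(d-1)}$ and vanish; irreducibility of $D_{\tiny{Betti}}$ then finishes. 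In short, the maximal-rank statement you defer is replaced by a degree count on $\PP^1$, made possible by the splitting theorem for restrictions of general Steiner bundles to rational curves. Without that tool (or an explicit full-rank specialization, which you do not provide), your argument has a genuine gap at its central step.
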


\begin{proof} 
Let $Z\in \PP^{2[n]}$ and observe that $\chi(M\otimes \mathcal{I}_Z)=0$ by Riemann-Roch. We show the vanishing of the cohomology groups $h^i(M\otimes \mathcal{I}_Z)$, $i=0,1,2$, when $Z\in D_{\tiny{Betti}}$, by reducing the problem to a computation on an irreducible plane rational curve of degree $d-1$.
Given a curve $C\subset \PP^2$ of degree $d-1$, there is an exact sequence
\[0\xrightarrow{\hspace{7mm}} M(-d+1)\xrightarrow{\hspace{7mm}} M\xrightarrow{\hspace{7mm}} M|_C\xrightarrow{\hspace{7mm}}0.\]
From the minimal free resolution of $M$ above, we see that $H^0(\mathbb{P}^2,M(-d+1))=H^1(\mathbb{P}^2,M(-d+1))=0$. 
This implies that the restriction morphism
\[H^0(\mathbb{P}^2,M)\xrightarrow{\hspace{7mm}} H^0(C,M|_C)\]
is an isomorphism. 
Let $C$ be the image of a general map $f:\mathbb{P}^1\xrightarrow{\hspace{3mm}}\mathbb{P}^2$ of degree $d-1$. 
Applying \cite[Theorem 1.8]{Hui13} on $M(-d+1)$, noting that 
$\frac{d}{d-1}\in\Phi_2$, we have that 
\[f^*M\cong\mathcal{O}_{\mathbb{P}^1}(d^2-2d+2)^{k(d-1)}\]
for any sufficiently large $k$.

The curve $C$ has exactly $\binom{d-2}{2}$ nodes as $f$ is general.  Let $Z\subset C$ be the divisor that consists of these nodes in addition to $n-\binom{d-2}{2}=3d-3$ general points of $C$ (so $Z\in D_{\tiny{Betti}}$).  Then, we have
\[\deg f^*Z=2\binom{d-2}{2}+3d-3=d^2-2d+3.\]
Observe that a section of $M|_C$ vanishing on $D$ induces a section of $f^*M$ vanishing on $f^*Z$ and vice versa. In other words,
\begin{align*}
    H^0(C,M|_C(-Z))&\cong H^0(\mathbb{P}^1,(f^*M)(-f^*Z))\\
    &\cong H^0(\mathbb{P}^1,\mathcal{O}_{\mathbb{P}^1}(-1))^{k(d-1)}=0.
\end{align*}
It follows that $H^0(\mathbb{P}^2,M\otimes\mathcal{I}_Z)\cong H^0(C,M|_C(-Z))=0$.

Notice that by tensoring the short exact sequence \[0 \xrightarrow{\hspace{7mm}} \mathcal{I}_Z \xrightarrow{\hspace{7mm}}\mathcal{O}_{\PP^2}\xrightarrow{\hspace{7mm}} \mathcal{O}_Z \xrightarrow{\hspace{7mm}} 0\] with $M$, it follows that $H^2(\mathbb{P}^2,M\otimes\mathcal{I}_Z) = 0$. 
Therefore, $M$ is cohomologically orthogonal to $\mathcal{I}_Z$.
Since $D_{\tiny{Betti}}$ is irreducible, $M$ is also cohomologically orthogonal to the general element in it.
\end{proof}

\medskip\noindent
Let us now use the previous bundle $M$ to describe the movable cone $\mathrm{Mov}\left(\mathbb{P}^{2[n]}\right)$. Recall that the nontrivial edge of $\EFF(\PP^{2[n]})$ has class $(d-1)H-\frac{1}{2}B$.

\begin{theorem}\label{TRI} Let $n=\binom{d+1}{2}$ be a triangular number with $d>2$. The primary extremal edge of the movable cone $\MOV(\PP^{2[n]})$ is spanned by the divisor class  \[D_{\mathrm{mov}} = \tfrac{d^2 - 2 d + 2}{d - 1}H-\tfrac{1}{2}B= \left(d-1+\tfrac{1}{d - 1}\right)H-\tfrac{1}{2}B .\]
\end{theorem}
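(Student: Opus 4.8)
The plan is to establish the two inclusions that pin down the extremal ray. First I would verify that the Brill–Noether divisor $D_M$ associated to the bundle $M$ of Lemma \ref{InterTri} has class proportional to $D_{\mathrm{mov}}$; this is a Riemann–Roch computation. Since $M$ has rank $k(d-1)$, slope $\mu(M)=d-2$ (being a general cokernel of $\mathcal{O}(d-3)^{kd}\to\mathcal{O}(d-2)^{k(2d-1)}$), and discriminant determined by the resolution, the class of $D_M=\{\,\mathcal{I}_Z : h^1(M\otimes\mathcal{I}_Z)\neq 0\,\}$ on $\PP^{2[n]}$ is computed by the standard formula $D_M \sim \mu(M)H - \tfrac{r(M)}{2}B$ up to positive scalar, and one checks this equals $\tfrac{d^2-2d+2}{d-1}H - \tfrac12 B$. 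Because Lemma \ref{InterTri} shows a general $Z\in D_{Betti}$ is cohomologically orthogonal to $M$, the point $[\mathcal{I}_Z]$ is \emph{not} in $\mathbf{B}(D_M)$; combined with the fact that a general point of $\PP^{2[n]}$ is not in $\mathbf{B}(D_M)$ either (it lies off a Brill–Noether divisor with this class), and that $D_{Betti}$ is the stable base locus of the primary chamber (Corollary \ref{cor: EffRigid}), one concludes $D_M$ lies strictly inside the movable cone wall, i.e. $D_{\mathrm{mov}}$ is movable: its stable base locus has codimension $\geq 2$.

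Next I would prove the reverse direction: no divisor class with slope strictly between $d-1$ and $\tfrac{d^2-2d+2}{d-1}$ is movable, which forces $D_{\mathrm{mov}}$ to span the extremal edge of $\MOV(\PP^{2[n]})$. The argument here is the minimality built into the interpolation problem: $M$ was constructed (via \cite[Theorem 1.8]{Hui13} and the orthogonality constraint after Definition \ref{orthogonal}) to be the semistable bundle of \emph{minimal slope} cohomologically orthogonal to the general $Z\in D_{Betti}$, so for any $V$ of smaller slope, $h^1(V\otimes\mathcal{I}_Z)\neq 0$ for the general such $Z$, meaning $D_{Betti}\subset \mathbf{B}(D_V)$. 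Since $D_{Betti}$ is a \emph{divisor} (Remark \ref{DivGaeta}), any effective class on the far side of $r_M$ has a divisorial stable base locus, hence is not movable. Thus the movable cone cannot extend past $r_M$, and together with $H$ spanning the other (nef, basepoint-free) edge, we get $\MOV(\PP^{2[n]}) = \langle H, D_{\mathrm{mov}}\rangle$.

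I expect the main obstacle to be the \emph{sharpness} half of the second paragraph — i.e., genuinely ruling out movability strictly past $r_M$ rather than merely exhibiting $D_M$ as an effective class whose base locus happens to drop. One must argue that the entire semicircular wall structure forces $D_{Betti}$ (and not some lower-dimensional locus) into the base locus of every class between the two rays, which uses that $D_{Betti}$ is irreducible of dimension $2n-1$ and that the Bridgeland wall associated to the destabilizing object $\mathcal{O}_{\PP^2}(-d+1)$ (Proposition \ref{prop::destObjMov}) is exactly the wall $r_M$ after the collapsing one — invoking \cite[Theorem B]{LZ} to transfer the Bridgeland wall to an SBLD wall. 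A secondary technical point is confirming the dimension count needed to know that the bundle $M$ in Lemma \ref{InterTri} actually varies in a family large enough that $D_M$ is a genuine (nonzero) effective divisor and not all of $\PP^{2[n]}$; this follows from $\chi(M\otimes\mathcal{I}_Z)=0$ together with the established vanishing, but it should be stated carefully. Once these are in place, the class computation and the identification of the ray are routine.
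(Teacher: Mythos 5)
Your first half (movability of $D_{\mathrm{mov}}$) follows the paper's route: the Brill--Noether class $D_M$ spans the ray of $\mu(M)H-\tfrac12 B$, and Lemma \ref{InterTri} shows the general point of $D_{Betti}$ --- the only candidate divisorial component of the base locus coming from the primary chamber --- is not in $\mathbf{B}(D_M)$. One correction: $\mu(M)\neq d-2$. From the resolution, $\mathrm{rk}(M)=k(2d-1)-kd=k(d-1)$ and $c_1(M)=k(2d-1)(d-2)-kd(d-3)=k(d^2-2d+2)$, so $\mu(M)=\tfrac{d^2-2d+2}{d-1}=d-1+\tfrac{1}{d-1}$. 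This value is the entire content of the theorem (the wall sits just beyond the effective edge at slope $d-1$); with slope $d-2$ the asserted ray would not even be effective. The correct class formula is $[D_M]=c_1(M)H-\tfrac{\mathrm{rk}(M)}{2}B=\mathrm{rk}(M)\bigl(\mu(M)H-\tfrac12 B\bigr)$, not $\mu(M)H-\tfrac{r(M)}{2}B$.

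The genuine gap is in the extremality half. You justify it by asserting that $M$ "was constructed to be the semistable bundle of minimal slope cohomologically orthogonal to the general $Z\in D_{Betti}$," but that minimality is not an input: in the paper it is Corollary \ref{SolveInterTri}, deduced \emph{from} Theorem \ref{TRI}, so as written the argument is circular. Moreover, even granting that every semistable $V$ of smaller slope fails cohomological orthogonality, this only places $D_{Betti}$ inside Brill--Noether divisors of smaller slope, not inside the stable base locus of an \emph{arbitrary} effective class on that side of the ray. Your proposed fallback via \cite[Theorem B]{LZ} could be made to work, but it imports exactly the Bridgeland machinery the paper is avoiding in this section. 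The missing elementary ingredient is a moving curve: a general pencil of $n=\binom{d+1}{2}$ points on a fixed irreducible plane curve $C$ of degree $d-1$ (such pencils exist since $h^0(\mathcal{O}_C(Z))=n-g+1=3d-2\geq 2$ for general $Z\subset C$) induces a curve class $\alpha$ sweeping out $D_{Betti}$, with $\alpha\cdot H=d-1$ and $\alpha\cdot\tfrac12 B=n+g-1=d^2-2d+2$ by Riemann--Hurwitz, where $g=\binom{d-2}{2}$. Hence $D_{\mathrm{mov}}\cdot\alpha=0$, and any class $\mu H-\tfrac12 B$ with $\mu<\mu(M)$ is negative on $\alpha$, so its stable base locus contains the divisor $D_{Betti}$ and the class is not movable. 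This single intersection computation is what pins the wall at $\tfrac{d^2-2d+2}{d-1}$ and replaces both your appeal to minimality and your appeal to \cite{LZ}.
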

\begin{proof}
    First note that the bundle $M$ in Lemma \ref{InterTri} has slope
    $$\mu(M)=\frac{d^2-2d+2}{d-1}.$$
    Therefore, the class of the Brill-Noether divisor defined by $M$ satisfies $D_M=k(d-1)\cdot D_{\mathrm{mov}}.$ Again by Lemma \ref{InterTri}, the general element $Z\in D_{\tiny{Betti}}$ is not contained in the base locus of $D_M$. Therefore, $D_{\mathrm{mov}}$ is movable.

    Now consider a divisor class $\mu H-B$ with $\mu<\mu(M)$. A general pencil of $n$ points on a plane curve of degree $d-1$ induces a curve $\alpha\subset\PP^{2[n]}$ whose class sweeps out $D_{\tiny{Betti}}$. It is easy to see that $D_M\cdot\alpha=0$. In particular, $(\mu H-B)\cdot\alpha<0$, so $D_{\tiny{Betti}}$ is contained in the stable base locus of $\mu H-B$. In particular, $\mu H-B$ is not a movable class.
\end{proof}

\medskip\noindent
The following corollary exhibits the pattern followed by the \textit{base locus decomposition program.}

\begin{cor}\label{SolveInterTri}
The bundle $M$ of Lemma \ref{InterTri} has the minimal slope of any semi-stable bundle cohomologically orthogonal to the ideal sheaf of a generic $Z\in D_{\tiny{Betti}}\subset \PP^{2[n]}$, where $n=\binom{d+1}{2}$, $d>2$. In other words, $M$ solves the interpolation problem for the general $Z\in D_{\tiny{Betti}}$.
\end{cor}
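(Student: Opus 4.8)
The plan is to read off the minimality of $\mu(M)$ from Theorem \ref{TRI}, by pairing the Brill--Noether divisor of a competing bundle against the curve dual to the extremal edge of $\MOV(\PP^{2[n]})$. First I would dispose of the easy half: by Lemma \ref{InterTri}, $M$ is semistable (a general bundle of the displayed Steiner type is slope-semistable), cohomologically orthogonal to a general $Z\in D_{\tiny{Betti}}$, and has slope $\mu(M)=\tfrac{d^2-2d+2}{d-1}\in\mathbb{Q}_{>0}$; since $\mu(\mathcal{I}_Z)=0$ we also have $\mu(\mathcal{I}_Z)+\mu(M)\ge 0$. So $M$ satisfies conditions (a), (b), (c) of the interpolation problem, and everything reduces to one inequality: every semistable bundle $V$ for which $V\otimes\mathcal{I}_Z$ has no cohomology satisfies $\mu(V)\ge\mu(M)$.

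For this I would fix a general $Z\in D_{\tiny{Betti}}$ and suppose $V$ is semistable with $V\otimes\mathcal{I}_Z$ acyclic. Then $\chi(V\otimes\mathcal{I}_Z)=0$, so the Brill--Noether locus $D_V=\{F:h^1(F\otimes V)\ne 0\}$ is a determinantal locus on $\PP^{2[n]}$ (the degeneracy locus of a map of vector bundles of equal rank), hence either the whole space or an effective divisor; because $h^1(V\otimes\mathcal{I}_Z)=0$ it does not contain $Z$, so it is a proper effective divisor with $Z\notin D_V$. I would then invoke the standard computation of its class \cite{CHW}, which depends only on $\mathrm{ch}(V)$, to conclude that $D_V$ lies on the ray through $\mu(V)H-\tfrac12 B$ in $\mathrm{NS}(\PP^{2[n]})_{\mathbb{Q}}$, with a positive proportionality constant (it is effective and no negative multiple of $\mu(V)H-\tfrac12 B$ is effective).

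Next I would reuse the curve $\alpha\subset\PP^{2[n]}$ from the proof of Theorem \ref{TRI}: a general pencil of $n$ points on a plane curve of degree $d-1$ passing through $Z$, for which $D_M\cdot\alpha=0$ and $H\cdot\alpha>0$ (the support of the points moves, so $\alpha$ is not contracted by the Hilbert--Chow morphism). Since $D_M$ lies on the ray through $\mu(M)H-\tfrac12 B$, the relation $D_M\cdot\alpha=0$ gives $\tfrac12(B\cdot\alpha)=\mu(M)(H\cdot\alpha)$, hence $(\mu(V)H-\tfrac12 B)\cdot\alpha=(H\cdot\alpha)\bigl(\mu(V)-\mu(M)\bigr)$. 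On the other hand $D_V\cdot\alpha\ge 0$, because $D_V$ is effective and does not contain the point $Z\in\alpha$, so $\alpha\not\subseteq D_V$. Combining, $\mu(V)-\mu(M)\ge 0$, which is the desired minimality; equivalently, if $\mu(V)<\mu(M)$ then $\alpha\subset\mathbf{B}(D_V)$ and in particular $Z\in\mathbf{B}(D_V)$, contradicting $Z\notin D_V$. Hence $M$ solves the interpolation problem for the general $Z\in D_{\tiny{Betti}}$.

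The items I would treat as routine are the semistability of general Steiner bundles, the exact proportionality constant in the class of $D_V$, and the intersection numbers $D_M\cdot\alpha=0$ and $H\cdot\alpha>0$, all of which are classical or already used in the proof of Theorem \ref{TRI}. I do not expect a genuine obstacle: this corollary is essentially a reformulation of Theorem \ref{TRI} in the language of the interpolation problem, the one real ingredient being the interplay between cohomological orthogonality (which keeps $Z$ off $D_V$) and the fact, established in Theorem \ref{TRI}, that $D_{\tiny{Betti}}$ sits in the stable base locus of every class strictly on the $B$-side of the $D_{\mathrm{mov}}$ ray.
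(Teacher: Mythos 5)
Your proposal is correct and follows essentially the same route as the paper: the paper's proof is a two-line appeal to Theorem \ref{TRI} (cohomological orthogonality to the general point of $D_{\tiny{Betti}}$ forces $D_{M'}$ to be movable, hence $\mu(M')\ge\mu(M)$), and your argument simply unpacks that appeal by intersecting $D_V$ directly with the sweeping curve $\alpha$ from the proof of Theorem \ref{TRI}. The extra details you supply (verification of conditions (a)--(c), the class of $D_V$, the sign of $D_V\cdot\alpha$) are consistent with what the paper treats as implicit.
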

\begin{proof}
    If $M'$ is cohomologically orthogonal to the general $Z\in D_{\tiny{Betti}}$ then $D_{M'}$ must be a movable class, so $\mu(M')\geq\mu(M)$ by Theorem \ref{TRI}.
\end{proof}

\subsection{Movable cone of $\PP^{2[n]}$ for $n$ a tangential number.} This subsection proves Theorem \ref{MOV} below, which concludes the proof of Part (b) of Theorem \ref{theoremB}.

\medskip\noindent
This case presents two additional difficulties to the previous subsection.
First, showing cohomological orthogonality in this case is more difficult. 
In fact, rather than showing orthogonality for the general element of $D_{\tiny{Betti}}$ as before, we do so for special elements and then argue by upper-semicontinuity.
Second, the construction of the curve class dual to the edge of the movable cone is more complicated (Proposition \ref{Prop:: InterpoTang}) --- albeit with interesting geometry.

\medskip\noindent 
For $n=2d(d+1)$, according to \cite[Theorem 3.13]{E}, there exist ideal sheaves $\mathcal{I}_Z$ of $n$ points with minimal free resolution
\begin{align}\label{qk}
0\xrightarrow{\hspace{7mm}}\OO_{\PP^2}(-2d-2)^d\oplus\OO_{\PP^2}(-2d-1)^k\overset{\phi}{\xrightarrow{\hspace{7mm}}}\OO_{\PP^2}(-2d-1)^k\oplus\OO_{\PP^2}(-2d)^{d+1}\xrightarrow{\hspace{7mm}} \mathcal{I}_Z \xrightarrow{\hspace{7mm}} 0,
\end{align}
as long as $k\leq d$. 
The minimal free resolution of a general $Z\in D_{\tiny{Betti}}$ is \eqref{qk} for $k=1$, which yields:
\begin{align}\label{qk1}
\mathcal{T}_{\mathbb{P}^2}(-2d-1)_{\bullet} \xrightarrow{\hspace{7mm}} (\mathcal{I}_Z)_{\bullet} \xrightarrow{\hspace{7mm}} W_{\bullet}\xrightarrow{\hspace{7mm}} \cdot 
\end{align}

\medskip\noindent
We show below that this triangle determines the movable cone $\MOV(\PP^{2[n]})$, without appealing to any Bridgeland stability results.
The sheaves with Resolution \eqref{qk} are in $D_{\tiny{Betti}}$ by Remark \ref{DivGaeta}. 
In fact, when $k=2$, these are the sheaves for which we establish cohomological orthogonality.
Recall that by Corollary \ref{cor: EffRigid} and \cite{CHW}, $D_{\tiny{Betti}}$ parametrizes sheaves satisfying $h^0(\PP^2,\mathcal{T}_{\PP^2}(2d-2)\otimes\mathcal{I}_Z) =\mathrm{Hom}(\mathcal{T}_{\PP^2}(-2d-1),\mathcal{I}_Z) \neq 0$.

\medskip\noindent
The following lemma is used in Proposition 4.11 and tells us the number of conditions that a scheme with Resolution \eqref{qk} imposes on sections of the (twisted) tangent bundle of $\PP^2$. We need a weaker version of it ($k=1,2$) but the general case is not much harder to show.

\begin{lem}\label{lem: qk sections}
A scheme $Z$ with Resolution \eqref{qk} satisfies $h^0(\PP^2,\mathcal{T}_{\PP^2}(2d-2)\otimes\mathcal{I}_Z) = k$.
\end{lem}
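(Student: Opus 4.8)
The strategy is to compute $h^0(\PP^2,\mathcal{T}_{\PP^2}(2d-2)\otimes\mathcal{I}_Z)$ directly from Resolution \eqref{qk} by tensoring with $\mathcal{T}_{\PP^2}(2d-2)$ and analyzing the resulting long exact sequence in cohomology. Since tensoring with a locally free sheaf preserves exactness, Resolution \eqref{qk} twisted by $\mathcal{T}_{\PP^2}(2d-2)$ remains a (now no longer minimal, but still exact) resolution
\[
0\to \mathcal{T}_{\PP^2}(-4)^d\oplus\mathcal{T}_{\PP^2}(-3)^k \xrightarrow{\hspace{3mm}} \mathcal{T}_{\PP^2}(-3)^k\oplus\mathcal{T}_{\PP^2}(-2)^{d+1}\to \mathcal{T}_{\PP^2}(2d-2)\otimes\mathcal{I}_Z\to 0.
\]
Breaking this into two short exact sequences through the kernel sheaf, the computation of $h^0$ (and the relevant $h^1$'s) reduces to the cohomology of twists $\mathcal{T}_{\PP^2}(j)$ for $j=-4,-3,-2$, all of which are standard: from the Euler sequence $0\to\mathcal{O}(j)\to\mathcal{O}(j+1)^3\to\mathcal{T}_{\PP^2}(j)\to 0$ one reads off $h^0(\mathcal{T}_{\PP^2}(-4))=h^0(\mathcal{T}_{\PP^2}(-3))=h^0(\mathcal{T}_{\PP^2}(-2))=0$, together with $h^1(\mathcal{T}_{\PP^2}(-3))=0$ and $h^1(\mathcal{T}_{\PP^2}(-2))=0$ while $h^1(\mathcal{T}_{\PP^2}(-4))=1$ (and all higher cohomology of these twists vanishes for $j\ge -4$).

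First I would verify, by Riemann--Roch or by chasing the Euler sequence, the exact cohomology table of $\mathcal{T}_{\PP^2}(j)$ for $j\in\{-4,-3,-2\}$ as above. Next, writing $K$ for the kernel of the surjection $\mathcal{O}_{\PP^2}(-2d-1)^k\oplus\mathcal{O}_{\PP^2}(-2d)^{d+1}\twoheadrightarrow\mathcal{I}_Z$, the twisted resolution gives $K\otimes\mathcal{T}_{\PP^2}(2d-2)=\ker$ of the middle map, and from $0\to \mathcal{T}_{\PP^2}(-4)^d\oplus\mathcal{T}_{\PP^2}(-3)^k\to \mathcal{T}_{\PP^2}(-3)^k\oplus\mathcal{T}_{\PP^2}(-2)^{d+1}\to \mathcal{T}_{\PP^2}(2d-2)\otimes\mathcal{I}_Z\to 0$ I would take the long exact sequence in cohomology. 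The term $H^0$ of the left sheaf vanishes ($h^0(\mathcal{T}_{\PP^2}(-4))=h^0(\mathcal{T}_{\PP^2}(-3))=0$), the term $H^0$ of the middle sheaf also vanishes ($h^0(\mathcal{T}_{\PP^2}(-3))=h^0(\mathcal{T}_{\PP^2}(-2))=0$), so $H^0(\mathcal{T}_{\PP^2}(2d-2)\otimes\mathcal{I}_Z)$ injects into $H^1(\mathcal{T}_{\PP^2}(-4)^d\oplus\mathcal{T}_{\PP^2}(-3)^k) = H^1(\mathcal{T}_{\PP^2}(-4))^d = \C^d$; and the connecting map's cokernel is controlled by $H^1$ of the middle sheaf, which is $H^1(\mathcal{T}_{\PP^2}(-3))^k\oplus H^1(\mathcal{T}_{\PP^2}(-2))^{d+1}=0$. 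Hence $H^0(\mathcal{T}_{\PP^2}(2d-2)\otimes\mathcal{I}_Z)$ would come out as the cokernel of the induced map $\C^{?}\to\C^d$ coming from the previous stage of the long exact sequence; tracking the ranks carefully via the additivity of Euler characteristics (which gives $\chi(\mathcal{T}_{\PP^2}(2d-2)\otimes\mathcal{I}_Z)$ independent of $Z$) should pin the answer to exactly $k$.

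The main obstacle I anticipate is the bookkeeping in the two-step long exact sequence: one must confirm that the map $H^0$ of the cokernel sheaf to $H^1$ of the left-hand sheaf lands \emph{surjectively} onto the part that survives, i.e.\ that there are no unexpected contributions from $H^1$ of the kernel sheaf $K\otimes\mathcal{T}_{\PP^2}(2d-2)$ obstructing the count. Concretely this amounts to showing that the map $H^1(\text{left})\to H^1(\text{middle})$ behaves as forced by dimensions --- but since $H^1(\text{middle})=0$, this map is automatically zero, so $H^1(\mathcal{T}_{\PP^2}(2d-2)\otimes\mathcal{I}_Z)$ is a quotient of $H^1(\text{left})=\C^d$ and $H^0$ is determined purely by the rank of the previous connecting homomorphism. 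The cleanest way to finish is therefore to compute $\chi(\mathcal{T}_{\PP^2}(2d-2)\otimes\mathcal{I}_Z)$ by Riemann--Roch using $\mathrm{ch}(\mathcal{I}_Z)=(1,0,-n)$ with $n=2d(d+1)$ and $\mathrm{ch}(\mathcal{T}_{\PP^2}(2d-2))$, check that it equals $k$ minus the (independently computed) $h^1$ and $h^2$, and conclude. The vanishing $h^2(\mathcal{T}_{\PP^2}(2d-2)\otimes\mathcal{I}_Z)=0$ follows from the resolution since $h^2(\mathcal{T}_{\PP^2}(-2))=h^2(\mathcal{T}_{\PP^2}(-3))=0$, so only $h^0$ and $h^1$ remain, and the Euler characteristic computation closes the argument.
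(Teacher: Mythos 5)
Your overall strategy (tensor Resolution \eqref{qk} by $\mathcal{T}_{\PP^2}(2d-2)$ and chase the long exact sequence) is the same as the paper's, but your cohomology table for the twists of the tangent bundle is wrong, and this breaks the argument. On $\PP^2$ one has $\mathcal{T}_{\PP^2}(-3)\cong\Omega^1_{\PP^2}$, so $h^1(\mathcal{T}_{\PP^2}(-3))=h^{1,1}(\PP^2)=1$, while $\mathcal{T}_{\PP^2}(-4)$ and $\mathcal{T}_{\PP^2}(-2)$ have \emph{no} cohomology at all (for $\mathcal{T}_{\PP^2}(-4)$ the connecting map $H^2(\mathcal{O}_{\PP^2}(-4))\to H^2(\mathcal{O}_{\PP^2}(-3))^3$ in the Euler sequence is Serre-dual to the isomorphism $H^0(\mathcal{O}_{\PP^2})^3\to H^0(\mathcal{O}_{\PP^2}(1))$, hence is an isomorphism, giving $h^1=h^2=0$). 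You have exactly swapped the roles of $-3$ and $-4$. With the correct table the long exact sequence reads
\[
0\longrightarrow H^0(\mathcal{T}_{\PP^2}(2d-2)\otimes\mathcal{I}_Z)\longrightarrow H^1(\mathcal{T}_{\PP^2}(-3))^k\xrightarrow{\ \phi_*\ }H^1(\mathcal{T}_{\PP^2}(-3))^k,
\]
so $h^0$ is the kernel of a map $\mathbb{C}^k\to\mathbb{C}^k$, not a subquotient of $H^1(\mathcal{T}_{\PP^2}(-4))^d=\mathbb{C}^d$ as you claim.

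More importantly, your plan to close the argument ``by additivity of Euler characteristics'' cannot work: since $\chi(\mathcal{T}_{\PP^2}(-4))=\chi(\mathcal{T}_{\PP^2}(-2))=0$ and the $\mathcal{T}_{\PP^2}(-3)^k$ contributions cancel between the two terms of the resolution, one gets $\chi(\mathcal{T}_{\PP^2}(2d-2)\otimes\mathcal{I}_Z)=0$ \emph{independently of $k$}; together with $h^2=0$ this only yields $h^0=h^1$. The lemma genuinely depends on $k$, and the missing ingredient is the \emph{minimality} of Resolution \eqref{qk}: the induced map $\phi_*$ above is governed by the $k\times k$ block of $\phi$ sending $\mathcal{O}_{\PP^2}(-2d-1)^k$ to $\mathcal{O}_{\PP^2}(-2d-1)^k$, whose entries are degree-zero forms and hence all vanish by minimality (all other blocks connect summands on which at least one of the two $H^1$'s vanishes). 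Therefore $\phi_*=0$ and $h^0=k\cdot h^1(\mathcal{T}_{\PP^2}(-3))=k$, which is exactly the paper's argument. Without invoking minimality the statement is false --- a non-minimal complex of the same shape (say with identity scalar block) reduces to the case $k=0$ and has $h^0=0$ --- so this step cannot be dispensed with.
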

\begin{proof}
Since the twisted tangent sheaf $\mathcal{T}_{\PP^2}(m)$ has no cohomology for $m=-2,-4$,  we get the following exact sequence after tensoring \eqref{qk} with $\mathcal{T}_{\PP^2}(2d-2)$ and taking cohomology: 
\[0\xrightarrow{\hspace{7mm}} H^0(\PP^2,\mathcal{T}_{\PP^2}(2d-2)\otimes\mathcal{I}_Z)\xrightarrow{\hspace{7mm}} H^1(\PP^2,\mathcal{T}_{\PP^2}(-3))^k\overset{\phi_*}{\xrightarrow{\hspace{7mm}}}H^1(\PP^2,\mathcal{T}_{\PP^2}(-3))^k.\]

The morphism $\phi_*$ is induced by $\phi$ in \eqref{qk} and is the zero map since \eqref{qk} is minimal. 
Therefore, \[h^0(\PP^2,\mathcal{T}_{\PP^2}(2d-2)\otimes\mathcal{I}_Z)=k\cdot h^1(\PP^2,\mathcal{T}_{\PP^2}(-3))=k.\]
\end{proof}

\begin{rmk}\label{folations}
Resolution \eqref{qk}, with $k=1$, yields the minimal free resolution of a subscheme $Z$ that uniquely determines a section of the tangent sheaf $\mathcal{T}_{\PP^2}(2d-2)$. Observe that the length $|Z|=2d(d+1)$ is strictly less than the length of the zero locus of a section; which is computed by the second Chern class $c_2(\mathcal{T}_{\PP^2}(2d-2))$. In fact, the length of $Z$ is the minimal possible length a subscheme that determines a section of $\mathcal{T}_{\PP^2}(2d-2)$ can have. Hence, $Z$ answers in this setting the Campillo-Olivares problem about foliations on $\mathbb{P}^2$ which are completely determined by a subscheme of minimal length of its singular locus \cite{CO,CO1,O}. We thank J. Olivares for pointing that out to us.

\medskip\noindent
Moreover, in Resolution \eqref{qk1} the residual complex $W$ carries the information of $\mathcal{O}_R$, where $R$ is the residual subscheme to $Z$ in the zero locus of the unique section of $\mathcal{T}_{\PP^2}(2d-2)$ that $Z$ determines.
\end{rmk}

\medskip\noindent
We now start solving the interpolation problem for a general point in $D_{\tiny{Betti}}$ using the previous lemma.

\begin{lem}\label{InterpolationTan}
Let $n=2d(d+1)$ be a tangential number.
Then the general bundle with the resolution
\[0\xrightarrow{\hspace{7mm}} \mathcal{O}_{\PP^2}(2d-3)^{kd}\xrightarrow{\hspace{7mm}} \mathcal{O}_{\PP^2}(2d-1)^{k(5d-1)}\xrightarrow{\hspace{7mm}} M\xrightarrow{\hspace{7mm}} 0\]
is cohomologically orthogonal to the general point in $D_{\tiny{Betti}}$ for $k$ sufficiently large.
\end{lem}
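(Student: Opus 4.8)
The plan is to mirror the structure of the proof of Lemma \ref{InterTri}, but to work with carefully chosen special (non-general) members of $D_{\tiny{Betti}}$ and then invoke upper-semicontinuity of cohomology, as the paper's introduction to this subsection warns. First I would record that $\chi(M\otimes\mathcal{I}_Z)=0$ by Riemann--Roch (this is what fixes the exponents $kd$ and $k(5d-1)$ in the resolution of $M$), so it suffices to show that $h^0(\PP^2,M\otimes\mathcal{I}_Z)=0$ for some $Z$ in $D_{\tiny{Betti}}$; the vanishing of $h^2$ will then follow formally by tensoring $0\to\mathcal{I}_Z\to\mathcal{O}_{\PP^2}\to\mathcal{O}_Z\to 0$ with $M$ and using that $M$ has no $H^2$ (its minimal free resolution involves only $\mathcal{O}_{\PP^2}(2d-3)$ and $\mathcal{O}_{\PP^2}(2d-1)$, both of which have vanishing $H^2$ for $d\geq 1$), exactly as in Lemma \ref{InterTri}.

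For the $h^0$ vanishing I would restrict to a plane curve. The natural choice here, given the slope $\mu(M)=\tfrac{8d^2-4d+1}{4d-1}$ from Theorem \ref{theoremB}(b), is a rational curve $C=f(\PP^1)$ with $f:\PP^1\to\PP^2$ general of degree $4d-1$, so that $\deg f^*M = k(4d-1)\cdot\mu(M) = k(8d^2-4d+1)$, and the twist $M(-4d+1)$ has vanishing $H^0$ and $H^1$ (readable off the minimal free resolution of $M$, since $2d-3-(4d-1)=-2d-2<0$ and $2d-1-(4d-1)=-2d<0$). This makes $H^0(\PP^2,M)\xrightarrow{\sim}H^0(C,M|_C)$ and reduces the computation to $\PP^1$: by \cite[Theorem 1.8]{Hui13} applied to $M(-4d+1)$ — after checking $\tfrac{?}{4d-1}\in\Phi_2$, the relevant rationality condition — one gets $f^*M\cong\mathcal{O}_{\PP^1}(e)^{k(4d-1)}$ for a single integer $e = k^{-1}(4d-1)^{-1}\deg f^*M = \tfrac{8d^2-4d+1}{4d-1}$ times... — more precisely $e$ is the unique integer with $f^*M$ balanced, for $k$ large. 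Then I would choose $Z\subset C$ supported partly at the $\binom{4d-2}{2}$ nodes of $C$ and partly at general points of $C$, arranged so that $Z\in D_{\tiny{Betti}}$ (this is where Lemma \ref{lem: qk sections} and the $k=2$ special members enter: I need $Z$ to lie on the locus where $\mathrm{Hom}(\mathcal{T}_{\PP^2}(-2d-1),\mathcal{I}_Z)\neq 0$, equivalently to have a resolution of the form \eqref{qk}), and compute $\deg f^*Z$ so that $(f^*M)(-f^*Z)$ has all summands of degree $-1$, forcing $H^0(\PP^1,(f^*M)(-f^*Z))=0$ and hence $H^0(C,M|_C(-Z))=H^0(\PP^2,M\otimes\mathcal{I}_Z)=0$. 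Finally, upper-semicontinuity propagates the vanishing from this special $Z$ to the general point of the irreducible locus $D_{\tiny{Betti}}$.

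The main obstacle is the middle step: arranging a scheme $Z$ that simultaneously (i) lies on the rational curve $C$ in a configuration whose pullback degree is exactly right to kill $H^0$ on $\PP^1$, and (ii) genuinely lies in $D_{\tiny{Betti}}$ — i.e. admits resolution \eqref{qk} rather than the generic resolution of $\PP^{2[n]}$. Unlike the triangular case, where $D_{\tiny{Betti}}$ is cut out just by a rank drop among three linear forms, here the divisorial condition is the more delicate $h^0(\mathcal{T}_{\PP^2}(2d-2)\otimes\mathcal{I}_Z)\neq 0$, which is why one must pass through the resolutions \eqref{qk} with $k=2$ and Lemma \ref{lem: qk sections}, and check that such special schemes can be placed on a nodal rational curve of the prescribed degree. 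Verifying the numerology — that the count of nodes plus general points equals $n=2d(d+1)$ and that the resulting $\deg f^*Z$ makes every summand of $(f^*M)(-f^*Z)$ have degree $-1$ — is routine once the setup is fixed, as is the rationality check $\tfrac{\mu}{4d-1}\in\Phi_2$ needed to apply \cite[Theorem 1.8]{Hui13}.
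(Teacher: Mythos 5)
Your strategy of transplanting the proof of Lemma \ref{InterTri} --- restrict $M$ to a nodal rational curve $C$, balance $f^*M$ on $\PP^1$ via \cite[Theorem 1.8]{Hui13}, and kill sections by placing $Z$ at nodes --- breaks down numerically in the tangential case, and this is precisely why the paper does not use it here. First, your degree count is off: for $f:\PP^1\to\PP^2$ of degree $4d-1$ one has $\deg f^*M=c_1(M)\cdot\deg C=k(8d^2-4d+1)(4d-1)$, not $k(8d^2-4d+1)$, so each balanced summand of $f^*M$ has degree $8d^2-4d+1$ (the rank is $k(4d-1)$). To force $H^0\bigl(\PP^1,(f^*M)(-f^*Z)\bigr)=0$ you would need $\deg f^*Z\ge 8d^2-4d+2$; but $Z$ is reduced of length $n=2d^2+2d$, so even if every point of $Z$ were a node of $C$ you only get $\deg f^*Z\le 2n=4d^2+4d$, which is strictly smaller for every $d\ge 2$. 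The obstruction is structural, not an artifact of the choice $\deg C=4d-1$: since $\mu(M)\approx 2d$ (rather than $\approx d-1$ as in the triangular case), any curve of degree $e\ge 2d$ --- and $e\ge 2d$ is forced if you want $h^0(M(-e))=0$ so that restriction is injective on sections --- yields per-summand degrees $\approx 2de\ge 4d^2-d$, and even the extremal choice $e=2d$ with all $\binom{2d-1}{2}$ nodes used gives $\deg f^*Z=4d^2-d+1$ while $f^*M$ still has $kd$ summands of degree $4d^2-d+1$, which survive the twist with degree $0$ and hence with sections. On top of this, you flag but do not resolve the second obstacle: membership in $D_{\tiny{Betti}}$ is the condition $h^0(\mathcal{T}_{\PP^2}(2d-2)\otimes\mathcal{I}_Z)\ne 0$, which, unlike the triangular condition $Z\subset C$, is not conferred by lying on the auxiliary curve and would have to be imposed separately.

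For comparison, the paper's proof takes a different route altogether: it picks the special members of $D_{\tiny{Betti}}$ with resolution \eqref{qk} for $k=2$, extracts from the block form of $\phi$ the exact sequence $0\to V\to\mathcal{T}_{\PP^2}(-2d-1)^2\to\mathcal{I}_Z\to 0$ with $V$ the general stable bundle of \eqref{eq: esp res}, and thereby reduces cohomological orthogonality of $M$ with $\mathcal{I}_Z$ to orthogonality with the exceptional bundle $\mathcal{T}_{\PP^2}(-2d-1)$ (immediate from $\chi=0$) and with a general $V$, the latter handled by \cite[Thm.\ 1.2]{CHK} after computing the controlling exceptional bundle of $V$ in three cases according to $d\bmod 3$; the small cases $1<d<6$ are checked in Macaulay2. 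If you want to salvage a curve-restriction argument you would have to find a replacement for the step that kills $H^0$, since the available degree of $f^*Z$ is genuinely insufficient.
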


\begin{proof}
Since $D_{\tiny{Betti}}$ is an irreducible divisor, it suffices to provide a single point in $D_{\tiny{Betti}}$ whose ideal sheaf is cohomologically orthogonal to the bundle $M$. We do this next.

For $1<d<6$, cohomological orthogonality can be proven using Macaulay2.
Appendix A contains code in order to do so.
Let us assume $d\geq 6$. 
There are ideal sheaves $\mathcal{I}_Z$ with resolution 
\begin{equation}\label{SPECIAL2}
0 \xrightarrow{\hspace{7mm}} \mathcal{O}_{\PP^2}(-2d-2)^d \oplus \mathcal{O}_{\PP^2}(-2d-1)^2 \overset{\phi}{\xrightarrow{\hspace{7mm}}} \mathcal{O}_{\PP^2}(-2d-1)^2 \oplus \mathcal{O}_{\PP^2}(-2d)^{d+1},    
\end{equation}
which is \eqref{qk} with $k=2$. These are contained in $D_{\tiny{Betti}}$ as they do not have a Gaeta resolution.
There exist row and column reductions which reduce the matrix for $\phi$ to the form
\[\phi=\left(\begin{array}{c|c}
0    & A \\ \hline
B & C
\end{array}\right),\]
where $B$ is the standard matrix for the resolution of $\mathcal{T}_{\mathbb{P}^2}(-2d-1)^2$.
Since we only need a single sheaf to be cohomologically orthogonal, we may assume that $A$ is general and so defines a vector bundle $V$ by
\begin{align}\label{eq: esp res}0 \xrightarrow{\hspace{7mm}} V \xrightarrow{\hspace{7mm}} \mathcal{O}_{\PP^2}(-2d-2)^d \overset{A}{\xrightarrow{\hspace{7mm}}} \mathcal{O}_{\PP^2}(-2d-1)^2 \oplus \mathcal{O}_{\PP^2}(-2d)^{d-5}\xrightarrow{\hspace{7mm}} 0.\end{align}

In other words, the minimal free resolution of $\mathcal{I}_Z$, when considered in the derived category, is equivalent to the triangle
\[\mathcal{T}_{\mathbb{P}^2}(-2d-1)^2_\bullet \xrightarrow{\hspace{7mm}} (\mathcal{I}_Z)_{\bullet} \xrightarrow{\hspace{7mm}} V_{\bullet}\xrightarrow{\hspace{7mm}} \cdot \] where $V_\bullet$ is the sheaf $V$ considered\footnote{$V_\bullet$ is concentrated in degree -1 so that it is the Bridgeland destabilizing quotient.} in degree $-1$.
The cohomology of this triangle yields the short exact sequence of sheaves
\[0\xrightarrow{\hspace{7mm}} V\xrightarrow{\hspace{7mm}} \mathcal{T}_{\mathbb{P}^2}(-2d-1)^2 \xrightarrow{\hspace{7mm}} \mathcal{I}_Z \xrightarrow{\hspace{7mm}} 0.\]

It now suffices to show that $M$ is cohomologically orthogonal to $\mathcal{T}_{\mathbb{P}^2}(-2d-1)^2$ as well as to a general $V$. Since $\chi(M\otimes \mathcal{T}_{\mathbb{P}^2}(-2d-1)) = 0$, then $\mathcal{T}_{\mathbb{P}^2}(-2d-1)$ is cohomologically orthogonal to a general $M$ as it is an exceptional bundle \cite{CHK}.  
This also can be seen directly from the resolution of $M$ since $\mathcal{T}_{\mathbb{P}^2}(-2)$ and $\mathcal{T}_{\mathbb{P}^2}(-4)$ have no cohomology.

To show $h^i(M\otimes V)=0$, with $i=0,1,2$, we apply \cite[Thm. 1.2]{CHK}. 
To do this, $V$ must be a general stable bundle in a positive dimensional moduli space. This is indeed the case as $A$ is general and $V^*$ has the Gaeta minimal free resolution. Furthermore, the discriminant $\Delta(V) = \frac{2d^2-10d+5}{9}$ is above the Dr\'ezet-Le Potier curve for $d\geq 1$ which makes it stable \cite{DL}. Thus, $V$ and $M$ satisfy the hypotheses of \cite[Thm. 1.2]{CHK}.

In order to see which case of \cite[Thm. 1.2]{CHK} we fall into, we need to compute the primary controlling exceptional bundle to $V$.
Recall that the primary controlling exceptional bundle associated to $V$ is the exceptional bundle whose left and right endpoint Chern characters form an interval that contains the largest solution to the equations:
\[\chi(V \otimes F)=0 \quad \text{ and }\quad  \Delta(F) = \frac{1}{2}.\] 
Since $\mathrm{rk}(V) = 3$, $\mu(V)  = \frac{2-8d}{3}$ and $\Delta(V) = \frac{2d^2-10d+5}{9}$, solving these equations for the slope of $F$ gives
\[\mu(F) = \frac{1}{6} \left(\sqrt{16 d^2 - 80 d + 85} + 16 d - 13\right).\]
If $d$ is congruent to $0 \mod 3$, the left and right endpoints of the interval controlled by $\mathcal{O}_{\PP^2}(\frac{10}{3}d-4)$ are $\frac{1}{6}(-33 + 3 \sqrt{5} + 20 d)$ and $\frac{1}{6} (20 d - 3 (5 + \sqrt{5}))$, respectively.
The slope $\mu(F)$ is between those endpoints if and only if $d \geq 4 = \left\lceil \frac{15 \sqrt{5} - 45}{3 \sqrt{5} - 10}\right\rceil$.
If $d$ is congruent to $1\mod 3$ ($2 \mod 3$), a similar process shows that the primary controlling exceptional bundle is $E_{\frac{1}{2}}(\frac{10d-13}{3})$ (or $\mathcal{O}_{\PP^2}(\frac{10d-11}{3})$).  
As a result, we have to address each of these cases separately.

First, if $d$ is congruent to $0$ mod $3$, then we have $\chi\left(V\left(\frac{10}{3}d-4\right)\right) = \frac{9 - d}{3}<0$.
By Part 1 of Theorem 1.2 in \cite{CHK}, the cohomology of $M\otimes V$ is determined by $\chi(M\otimes V) = 0$.

Next, if $d$ is congruent to $1$ mod $3$, then we have $\chi\left(V\otimes E_{\frac{1}{2}}\left(\frac{10d-13}{3}\right)\right) = 2 > 0$.
Since \[\mathrm{rank}(V) - \chi\left(V \otimes E_{\frac{1}{2}}\left(\frac{10d-13}{3}\right)\right) \mathrm{rank}\left(E_{\frac{1}{2}}\left(\frac{10d-13}{3}\right)^*\right)= 3-2\cdot2 = -1<0,\] the cohomology of $M\otimes V$ is determined by $\chi(M\otimes V) = 0$, by Part 2 of Theorem 1.2 in \cite{CHK}.

Finally, if $d$ is congruent to $2$ mod $3$, then we have $\chi\left(V\left(\frac{10d-11}{3}\right)\right) = \frac{d+4}{3}>0$.
Since \[\mathrm{rank}(V) - \chi\left(V\left(\frac{10d-11}{3}\right)\right)\mathrm{rank}\left(\mathcal{O}_{\PP^2}\left(\frac{11-10d}{3}\right)\right)= 3-1\cdot(d+4) = -1-d<0\] 
the cohomology of $M\otimes V$ is determined by $\chi(M\otimes V) = 0$, by Part 2 of Theorem 1.2 in \cite{CHK}.

In every case, the bundle $M$ is cohomologically orthogonal to $V$ and hence to a general $\mathcal{I}_Z$ in $D_{\tiny{Betti}}$.
\end{proof}

\medskip\noindent
The following proposition exhibits the solution to the interpolation problem for a general point in $D_{\tiny{Betti}}$. 

\begin{prop}\label{Prop:: InterpoTang}
The bundle $M$ of Lemma \ref{InterpolationTan} has the minimum slope among bundles cohomologically orthogonal to the generic point in $D_{\tiny{Betti}}\subset \PP^{2[n]}$, when $n=2d(d+1), d\geq 1$.
\end{prop}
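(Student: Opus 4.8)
The plan is to show that no semi-stable bundle of slope strictly below $\mu(M)$ can be cohomologically orthogonal to a general $Z\in D_{\tiny{Betti}}$, so that $M$ achieves the minimum. The key is the same principle used in the triangular case (Corollary~\ref{SolveInterTri}): if $M'$ were cohomologically orthogonal to a general $Z\in D_{\tiny{Betti}}$, then the Brill--Noether divisor $D_{M'}$ would avoid the general point of $D_{\tiny{Betti}}$ in its stable base locus; since $D_{\tiny{Betti}}$ is an irreducible divisor sweeping out an extremal region of $\EFF(\PP^{2[n]})$, the class $D_{M'}$ would then have to be movable, forcing $\mu(M')\geq \mu(M)$ by the (to-be-proved) description of $\MOV(\PP^{2[n]})$ in Theorem~\ref{MOV}. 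So logically this proposition is the analogue of Corollary~\ref{SolveInterTri} and should be deduced from the movable cone computation; alternatively, one proves the minimality directly and feeds it into Theorem~\ref{MOV}.

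First I would pin down $\mu(M)$ from the resolution in Lemma~\ref{InterpolationTan}: with ranks and first Chern classes $\mathrm{rk}(M)=k(5d-1)-kd=k(4d-1)$ and $\mathrm{ch}_1(M)=k(5d-1)(2d-1)-kd(2d-3)$, one gets $\mu(M)=\frac{8d^2-4d+1}{4d-1}=\bigl(2d-1+\tfrac{d}{4d-1}\bigr)$ — this matches the stated nontrivial ray $\tfrac{8d^2-4d+1}{4d-1}H-\tfrac12 B$ of $\MOV(\PP^{2[n]})$ in Theorem~B(b). Next I would exhibit a curve class $\alpha\subset\PP^{2[n]}$, dual to this ray, which sweeps out $D_{\tiny{Betti}}$ and satisfies $D_M\cdot\alpha=0$; this is the content foreshadowed by the remark that ``the construction of the curve class dual to the edge of the movable cone is more complicated (Proposition~\ref{Prop:: InterpoTang})''. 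Concretely, $\alpha$ should come from varying $Z$ in a one-parameter family inside the zero scheme of a fixed section of $\mathcal{T}_{\PP^2}(2d-2)$ (using Remark~\ref{folations}: a general $Z\in D_{\tiny{Betti}}$ determines such a section uniquely, with residual $R$ encoded by $W$), so that $M$ restricted along $\alpha$ has constant cohomology and $D_M\cdot\alpha=0$. Then for any class $\mu H-B$ with $\mu<\mu(M)$ one gets $(\mu H-B)\cdot\alpha<0$, so $D_{\tiny{Betti}}$ lies in its stable base locus and the class is not movable; this also shows $M$ cannot be dominated by an orthogonal bundle of smaller slope, since such a bundle would produce a movable $D_{M'}$ with $\mu(M')<\mu(M)$.

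Combining: Lemma~\ref{InterpolationTan} gives that $D_M$ is movable (its stable base locus misses the general point of $D_{\tiny{Betti}}$, hence misses $D_{\tiny{Betti}}$ by irreducibility, and off $D_{\tiny{Betti}}$ the ideal sheaves have Gaeta resolutions so are handled by \cite{CHW}), while the curve-class argument shows everything of strictly smaller slope fails to be movable; therefore $\mu(M)$ is the minimal slope of a cohomologically orthogonal semi-stable bundle, i.e. $M$ solves the interpolation problem for the general $Z\in D_{\tiny{Betti}}$. I expect the main obstacle to be the explicit construction and intersection computation for the dual curve class $\alpha$: unlike the triangular case, where a pencil of points on a degree $d-1$ curve works transparently, here one must move $Z$ inside the singular scheme of a foliation and check that the resulting family still sweeps $D_{\tiny{Betti}}$ and pairs to zero with $D_M$ — this requires controlling how $\mathrm{Hom}(\mathcal{T}_{\PP^2}(-2d-1),\mathcal{I}_Z)$ and the residual $R$ vary, which is exactly the subtle geometry flagged in the text.
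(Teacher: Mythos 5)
Your overall strategy is the right one and matches the paper's: Lemma \ref{InterpolationTan} shows $D_M$ avoids the general point of $D_{\tiny{Betti}}$, and minimality is reduced to exhibiting a curve class $\beta$ that sweeps out $D_{\tiny{Betti}}$ and satisfies $D_M\cdot\beta=0$. Your computation of $\mu(M)=\tfrac{8d^2-4d+1}{4d-1}$ is also correct. However, your proposed construction of the dual curve class is not viable, and this construction is precisely the hard content of the proposition. You suggest obtaining $\beta$ by ``varying $Z$ in a one-parameter family inside the zero scheme of a fixed section of $\mathcal{T}_{\PP^2}(2d-2)$.'' But that zero scheme $\Gamma$ is a zero-dimensional scheme of length $c_2(\mathcal{T}_{\PP^2}(2d-2))=4d^2-2d+1$; there is no positive-dimensional family of length-$n$ subschemes inside a fixed finite scheme, so no curve in $\PP^{2[n]}$ arises this way. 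The section of $\mathcal{T}_{\PP^2}(2d-2)$ cannot be held fixed along the family.

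What the paper actually does is fix only the residual part $\Gamma_{res}=\Gamma\setminus Z$ (of length $\delta=2d^2-4d+1$) and then introduce an auxiliary irreducible plane curve $C$ of degree $4d-1$ passing through $Z$ and having simple nodes exactly at $\Gamma_{res}$ (a dimension count shows such $C$ exist). One then proves $h^0(C,\mathcal{O}_C(Z))=2$ via Riemann--Roch on $C$, using adjunction to identify $h^0(C,\omega_C(-Z))$ with $h^0(\PP^2,\mathcal{I}_\Gamma(4d-4))$, which in turn is computed from the minimal free resolution of $\mathcal{I}_\Gamma$ (Lemma \ref{lem: resolution zeroes tangent}). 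So $Z$ moves in a pencil on $C$, giving $\beta$ with $\beta\cdot H=4d-1$ and $\beta\cdot\tfrac12 B=8d^2-4d+1$, hence $D_M\cdot\beta=0$. None of these ingredients (the choice of degree $4d-1$, the nodal condition at $\Gamma_{res}$, the Riemann--Roch/adjunction computation, and the resolution of $\mathcal{I}_\Gamma$) appear in your proposal, so the key step is missing rather than merely deferred. As a secondary point, be careful with the logical order: the proposition feeds into Theorem \ref{MOV}, so you cannot invoke the movable cone description to prove it; you do acknowledge this, but your first paragraph leans on it.
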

\begin{proof}
Similarly to Corollary \ref{SolveInterTri}, it suffices to exhibit a  curve class $\beta\subset \PP^{2[n]}$ that both sweeps out $D_{\tiny{Betti}}$ and is orthogonal to $D_M$. We do this next.

Consider a general element $Z\in D_{\tiny{Betti}}$ as part of the vanishing locus of a section in $H^0(\PP^2, \mathcal{T}_{\PP^2}(2d-2))$, which we denote by $\Gamma=Z\cup\Gamma_{\tiny{res}}$.
Note that $\Gamma_{\tiny{res}}$ has length
\[\delta:=c_2(\mathcal{T}_{\PP^2}(2d-2))-n=2d^2-4d+1.\]
The space of degree $4d-1$ curves which pass through $Z$ and are nodal at $\Gamma_{res}$ has dimension
\[\binom{4d-1+2}{2}-1-n-3\delta=12d-4.\]
Therefore, we may consider an irreducible curve $C$ of degree $4d-1$ which passes through $Z$ and has simple nodes at $\Gamma_{res}$ as its only singularities. Using the Riemann-Roch theorem, we get
\begin{align*}
    h^0(C,\mathcal{O}_C(Z))&=h^0(C,\omega_C \otimes \mathcal{O}_C(-Z))+n+1-g(C)\\
    &=h^0(\PP^2,\mathcal{I}_{\Gamma}(4d-4))+n+1-\binom{4d-2}{2}+\delta\\
    &=h^0(\PP^2,\mathcal{I}_{\Gamma}(4d-4))-(4d^2-8d+1).
\end{align*}
We know that $\Gamma$ has minimal free resolution as stated in Lemma \ref{lem: resolution zeroes tangent}\footnote{We defer the proof of this lemma as its proof is independent of the rest of the proof of Theorem \ref{MOV}.} which, in particular, implies that
\[h^0(\PP^2,\mathcal{I}_{\Gamma}(4d-4))=4d^2-8d+3.\]
This proves that $h^0(C,\mathcal{O}_C(Z))=2$, and hence, $Z$ moves in a pencil on $C$, which can be proved to be base point free. This pencil induces a curve class $\beta\subset\mathbb{P}^{2[n]}$ with a representative which contains $Z$ and satisfies
\[
    \beta\cdot H = 4d-1 \quad \text{and} \quad
    \beta\cdot \tfrac{1}{2}B=8d^2-4d+1.
\]
Therefore, $D_{M}\cdot\beta = 0$.
Since $Z$ was general in $D_{\tiny{Betti}}$, curves with class $\beta$ sweep out $D_{\tiny{Betti}}$, and the result follows.
\end{proof}

\medskip\noindent
Note that the assumption $d>1$ is necessary in the previous proof as otherwise $\delta<0$.

\medskip\noindent
We now compute the primary extremal divisor of the movable cone $\mathrm{Mov}\left(\mathbb{P}^{2[n]}\right)$, when $n$ is a tangential number.

\begin{theorem}\label{MOV} Let $n=2d(d+1)$, with $d\geq2$.
The primary extremal ray of the movable cone $\MOV(\PP^{2[n]})$ is spanned by the divisor class  \[D_{mov} = \tfrac{8 d^2 - 4 d + 1}{4 d - 1}H-\tfrac{1}{2}B.\]
\end{theorem}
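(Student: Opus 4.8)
The plan is to mimic the structure of the proof of Theorem \ref{TRI}, now using the bundle $M$ from Lemma \ref{InterpolationTan} and the curve class $\beta$ constructed in Proposition \ref{Prop:: InterpoTang}. Concretely, I would proceed in two halves: first show $D_{\mathrm{mov}}$ is movable, then show that no class of strictly smaller slope is movable, so that $D_{\mathrm{mov}}$ spans the boundary ray of $\MOV(\PP^{2[n]})$ adjacent to the effective edge.

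\textbf{Step 1: $D_{\mathrm{mov}}$ is movable.} From the resolution of $M$ in Lemma \ref{InterpolationTan} I would compute its invariants: the rank is $k(5d-1)-kd = k(4d-1)$ and the first Chern character gives slope $\mu(M) = \frac{(2d-1)k(5d-1) - (2d-3)kd}{k(4d-1)} = \frac{8d^2-4d+1}{4d-1}$. Hence the Brill–Noether divisor $D_M = \{F : h^1(F\otimes M)\neq 0\}$ has class a positive multiple of $D_{\mathrm{mov}} = \mu(M)H - \tfrac12 B$ (using the standard formula for Brill–Noether classes on $\PP^{2[n]}$, as in \cite{CHW}). By Proposition \ref{Prop:: InterpoTang}, $M$ is cohomologically orthogonal to a general $Z\in D_{\tiny{Betti}}$, so such $Z$ is not in the base locus of $D_M$; moreover a general point of $\PP^{2[n]}$ lies on the $H$ side and is trivially not in the base locus. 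Therefore the stable base locus of $D_{\mathrm{mov}}$ is contained in $D_{\tiny{Betti}}$ minus a general point, hence has codimension $\geq 2$, so $D_{\mathrm{mov}}$ is movable.

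\textbf{Step 2: nothing past $D_{\mathrm{mov}}$ is movable.} Take a class $\mu H - \tfrac12 B$ with $\mu < \mu(M)$. By Proposition \ref{Prop:: InterpoTang}, the curve class $\beta\subset\PP^{2[n]}$ sweeps out $D_{\tiny{Betti}}$ and satisfies $\beta\cdot H = 4d-1$, $\beta\cdot\tfrac12 B = 8d^2-4d+1$, so $(\mu H - \tfrac12 B)\cdot\beta = \mu(4d-1) - (8d^2-4d+1) < 0$. Since the representatives of $\beta$ through a general $Z$ sweep out the divisor $D_{\tiny{Betti}}$, this forces $D_{\tiny{Betti}}$ into the stable base locus of $\mu H - \tfrac12 B$; as $D_{\tiny{Betti}}$ is a divisor, the class is not movable. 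Combined with Step 1 and the fact that the effective edge is $(2d-\tfrac12)$... more precisely the nontrivial edge of $\EFF(\PP^{2[n]})$ has slope $< \mu(M)$ (this is where $D_{\tiny{Betti}}$ lives), we conclude $D_{\mathrm{mov}}$ spans the primary extremal ray of $\MOV(\PP^{2[n]})$. I would also record, as in Corollary \ref{SolveInterTri}, that this simultaneously shows $M$ solves the interpolation problem for a general $Z\in D_{\tiny{Betti}}$, since any cohomologically orthogonal bundle would give a movable Brill–Noether class and hence have slope $\geq \mu(M)$.

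\textbf{Main obstacle.} The genuine work has already been front-loaded into Lemma \ref{InterpolationTan} (cohomological orthogonality, via the case analysis with \cite[Thm.\ 1.2]{CHK}) and Proposition \ref{Prop:: InterpoTang} (the construction of the nodal degree $4d-1$ curve $C$ through $Z$ with nodes at $\Gamma_{\mathrm{res}}$ and the Riemann–Roch count showing $h^0(C,\mathcal{O}_C(Z))=2$, together with base-point-freeness of the pencil). Given those, the theorem itself is a short bookkeeping argument. The one point requiring a little care is verifying that the pencil $\beta$ is genuinely base point free — so that it really is a curve in $\PP^{2[n]}$ sweeping out $D_{\tiny{Betti}}$ rather than collapsing — and checking the numerical inequality $\mu(M)(4d-1) = 8d^2-4d+1$ exactly, which makes $D_M\cdot\beta = 0$ and pins down the ray rather than merely bounding it.
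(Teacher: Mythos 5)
Your proposal is correct and follows essentially the same route as the paper: movability of $D_M$ via the cohomological orthogonality established in Lemma \ref{InterpolationTan}, and extremality via the curve class $\beta$ from Proposition \ref{Prop:: InterpoTang}, which sweeps out $D_{\tiny{Betti}}$ and satisfies $D_M\cdot\beta=0$; your slope computation $\mu(M)=\tfrac{8d^2-4d+1}{4d-1}$ matches. The only quibble is attribution: cohomological orthogonality is the content of Lemma \ref{InterpolationTan}, while Proposition \ref{Prop:: InterpoTang} supplies the minimality of slope and the dual curve.
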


\begin{proof} Using Remark \ref{DivGaeta} (see also Proposition \ref{prop::destObjMov}), we can write the minimal free resolution of a general $Z\in D_{\tiny{Betti}}$ in the derived category as follows 
\begin{equation*}
    \begin{aligned}
   W[-1]\xrightarrow{\hspace{7mm}} \mathcal{T}_{\mathbb{P}^2}(-2d-1)\xrightarrow{\hspace{7mm}}\mathcal{I}_Z\xrightarrow{\hspace{7mm}}   \cdot
    \end{aligned}
\end{equation*}
where $W_\bullet$ is the following complex of rank $1$ sitting in degrees $-1$ and $0$:
$$W_\bullet: \OOT(-2d-2)^d \xrightarrow{\hspace{5mm}} \OOT(-2d-1)\oplus\OOT(-2d)^{d-2}.$$

The log Chern character $\zeta$ that satisfies the equations 
\begin{equation*}
 \chi(\zeta\otimes \mathcal{T}_{\mathbb{P}^2}(-2d-1))=0\quad\text{and} \quad       
 \chi(\zeta\otimes W)=0
\end{equation*}
is that of the bundles in Lemma \ref{InterpolationTan} (See Figure 2).

By Lemma \ref{InterpolationTan} and Proposition \ref{Prop:: InterpoTang}, the general bundle with resolution \[0\xrightarrow{\hspace{7mm}} \mathcal{O}_{\PP^2}(2d-3)^{kd}\xrightarrow{\hspace{7mm}} \OOT(2d-1)^{k(5d-1)}\xrightarrow{\hspace{7mm}} M \xrightarrow{\hspace{7mm}} 0\] solves the interpolation problem for the general point in $D_{\tiny{Betti}}$. 
The Brill-Noether divisor induced by the bundle $M$, whose class is $D_{M}=\mu_M H-B/2$, does not contain $D_{\tiny{Betti}}$ in its base locus and, therefore, is movable. 
Furthermore, this class is dual to the curve class $\beta$ constructed in the proof of Proposition \ref{Prop:: InterpoTang}. 
Since this curve class sweeps out $D_{\tiny{Betti}}$, it follows that the class $D_M$ is extremal in the movable cone $\Mov(\PP^{2[n]})$.
\end{proof}

\begin{center}\label{figuretangent}
\begin{figure}[htb]
\resizebox{.75\textwidth}{!}{\includegraphics{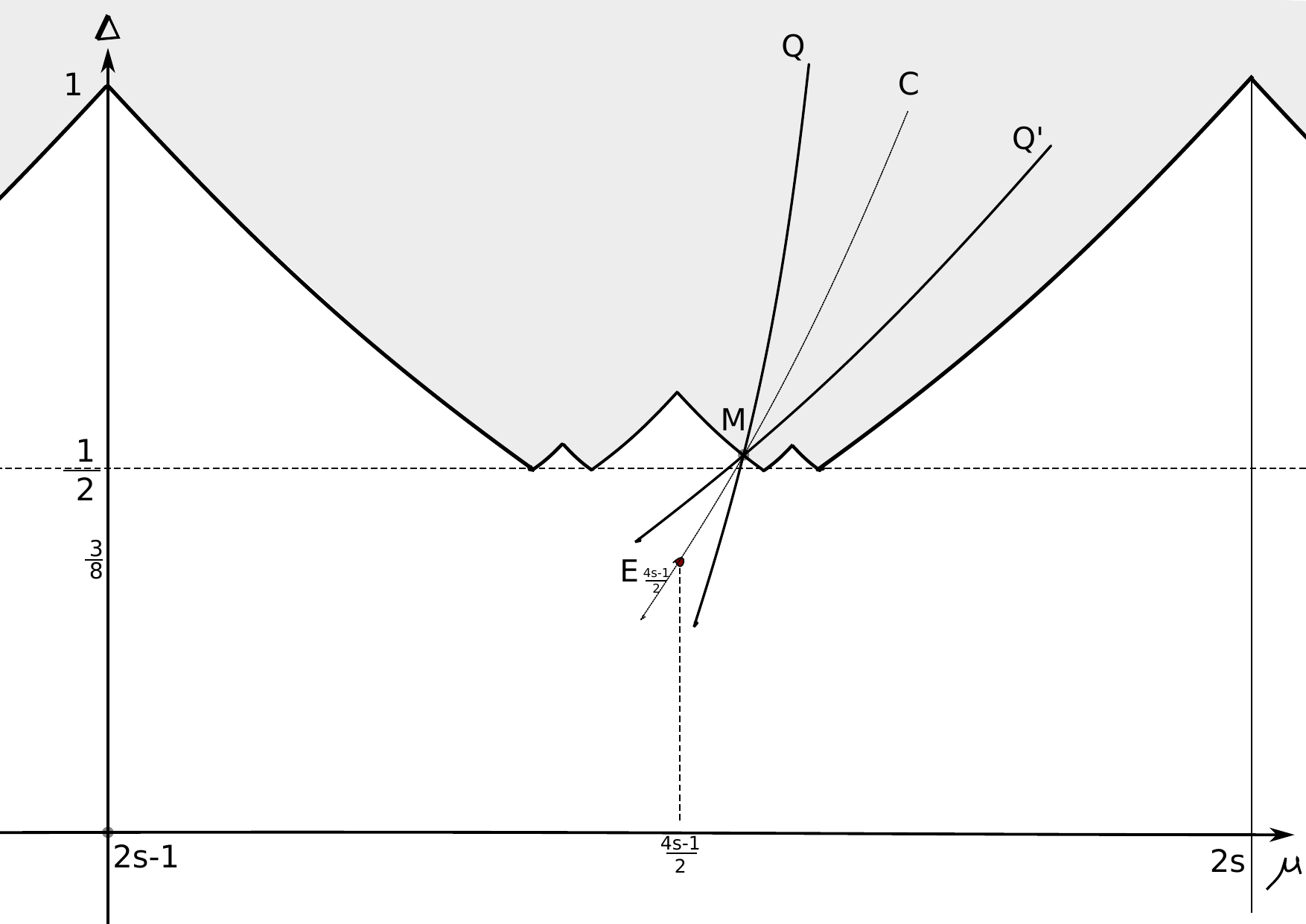}}
\caption{The invariants $(\mu(M),\Delta(M))$ of the bundle $M$ that determine the movable cone lies at the intersection of the curves $Q=\{\chi(\zeta\otimes \mathcal{T}_{\mathbb{P}^2}(-2d-1))=0\}$ and $Q'=\{\chi(\zeta\otimes W)=0\}$, where $W$ is the complex in the proof of Theorem \ref{MOV}. The curve $C$ contains the pairs $(\mu,\Delta)$ orthogonal to $ch(\mathcal{I}_Z)=(1,0,2d(d+1))$.}
\end{figure}
\end{center}

The following lemma was used in the proof of Proposition \ref{Prop:: InterpoTang}.

\begin{lem}\label{lem: resolution zeroes tangent}
    Let $\gamma\in H^0(\PP^2,\mathcal{T}_{\PP^2}(2d-2))$ be a general section, and let $\Gamma$ be the zero locus of $\gamma$. Then the ideal sheaf $\mathcal{I}_{\Gamma}$ has minimal free resolution
    \begin{equation}\label{res zero section}
        0\xrightarrow{\hspace{7mm}}\OO_{\PP^2}(-4d+1)\oplus\OO_{\PP^2}(-2d-1)\xrightarrow{\hspace{7mm}}\OO_{\PP^2}(-2d)^3\xrightarrow{\hspace{7mm}}\mathcal{I}_\Gamma\xrightarrow{\hspace{7mm}}0.
    \end{equation}
\end{lem}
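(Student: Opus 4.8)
The statement describes the zero locus $\Gamma$ of a general section $\gamma$ of $\mathcal{T}_{\PP^2}(2d-2)$. The natural starting point is the Koszul-type resolution coming from the section itself. A section $\gamma \in H^0(\PP^2, \mathcal{T}_{\PP^2}(2d-2))$ may be viewed as a map $\OO_{\PP^2}(-2d+2) \xrightarrow{\gamma} \mathcal{T}_{\PP^2}$, or equivalently (dualizing the Euler sequence) as two linear relations among three forms of degree $2d-1$. Concretely, the Euler sequence twisted by $\OO(2d-2)$ reads
\[
0 \xrightarrow{\hspace{5mm}} \OO_{\PP^2}(2d-2) \xrightarrow{\hspace{5mm}} \OO_{\PP^2}(2d-1)^3 \xrightarrow{\hspace{5mm}} \mathcal{T}_{\PP^2}(2d-2) \xrightarrow{\hspace{5mm}} 0,
\]
so a section of $\mathcal{T}_{\PP^2}(2d-2)$ lifts (after choosing a representative) to a triple of degree $2d-1$ forms $(f_0, f_1, f_2)$, well-defined modulo the image of multiplication by the coordinate vector $(x_0, x_1, x_2)$. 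The vanishing scheme $\Gamma$ is where $\gamma$ vanishes as a section of $\mathcal{T}_{\PP^2}(2d-2)$, and one identifies $\mathcal{I}_\Gamma(2d) \cong$ (the degree $2d$ piece of the ideal generated by the $2\times 2$ minors of the $2\times 3$ matrix whose rows are $(x_0,x_1,x_2)$ and $(f_0, f_1, f_2)$), i.e. by the three forms $x_i f_j - x_j f_i$, which have degree $2d$. So first I would set up this explicit presentation: $\mathcal{I}_\Gamma$ is generated by three degree-$2d$ forms $g_0, g_1, g_2$ obtained as the signed maximal minors, giving the surjection $\OO_{\PP^2}(-2d)^3 \twoheadrightarrow \mathcal{I}_\Gamma$.

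\textbf{Main steps.} Next I would identify the syzygies of $(g_0, g_1, g_2)$. The Hilbert--Burch / Eagon--Northcott formalism applied to the $2\times 3$ matrix $\begin{pmatrix} x_0 & x_1 & x_2 \\ f_0 & f_1 & f_2\end{pmatrix}$ gives two obvious syzygies among the minors: the row relations $\sum_i x_i g_i = 0$ (linear syzygy, since $x_i$ have degree $1$) and $\sum_i f_i g_i = 0$ (syzygy of degree $2d-1$). For these minors to generate $\mathcal{I}_\Gamma$ with exactly these syzygies — i.e. for the Hilbert--Burch complex
\[
0 \xrightarrow{\hspace{5mm}} \OO_{\PP^2}(-4d+1)\oplus \OO_{\PP^2}(-2d-1) \xrightarrow{\hspace{5mm}} \OO_{\PP^2}(-2d)^3 \xrightarrow{\hspace{5mm}} \mathcal{I}_\Gamma \xrightarrow{\hspace{5mm}} 0
\]
to be exact — the Buchsbaum--Eisenbud acyclicity criterion requires that the ideal of $2\times 2$ minors have the expected codimension $2$, which holds precisely when $\Gamma$ is a zero-dimensional scheme, i.e. when $\gamma$ has no zero divisor along which it vanishes; this is exactly genericity of $\gamma$. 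Then minimality is automatic from the degree bookkeeping: the generators sit in degree $2d$ and the first syzygies in degrees $2d+1$ and $4d-1$, both strictly larger (using $d\geq 1$), so no unit entries appear. I would also cross-check numerically: $c_2(\mathcal{T}_{\PP^2}(2d-2)) = \mathrm{length}(\Gamma)$ should equal $4d^2 - 2d + 1$, and the Hilbert polynomial of the cokernel of the displayed map should match that of $\mathcal{I}_\Gamma$ — this is the additivity-of-Chern-character check that the three line bundles and their degrees are forced.

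\textbf{Obstacle.} The routine parts are the Euler-sequence manipulation and the Chern class bookkeeping; the one genuine point requiring care is verifying that for \emph{general} $\gamma$ the scheme $\Gamma$ really is zero-dimensional (so that the codimension-$2$ hypothesis of Hilbert--Burch/Buchsbaum--Eisenbud is met) and, slightly more, that the Hilbert--Burch complex above — with these specific twists — is the minimal one rather than some degeneration with extra generators. The cleanest route is: (i) a dimension/incidence argument or a single explicit example (e.g. the Macaulay2 computation already invoked elsewhere in the paper) showing $\Gamma$ is finite and reduced of the expected length; (ii) invoke Buchsbaum--Eisenbud exactness; (iii) observe minimality by the degree gap. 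An alternative, if one wants to avoid Hilbert--Burch entirely, is to compute directly: twist the Euler sequence appropriately, take the section $\gamma$ as a map of bundles, form its Koszul complex (the section of a rank-$2$ bundle has a length-$2$ Koszul resolution $0 \to \OO(-\det) \to \mathcal{T}_{\PP^2}(2d-2)^\vee \to \mathcal{I}_\Gamma \to 0$ with $\det \mathcal{T}_{\PP^2}(2d-2) = \OO(4d-1)$), then resolve $\mathcal{T}_{\PP^2}(2d-2)^\vee \cong \Omega^1_{\PP^2}(-2d+2)$ by line bundles via the dual Euler sequence and take the mapping cone — exactly the mapping-cone technique used throughout Section \ref{sec: eff}. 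This second approach produces the claimed resolution more or less mechanically, so I would present it as the main argument and relegate the genericity-of-$\Gamma$ point to a short remark citing the explicit computation.
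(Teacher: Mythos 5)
Your proof is correct, but it follows a genuinely different route from the paper's. The paper never invokes Hilbert--Burch or a Koszul complex: it first notes that any $\Gamma$ with the displayed resolution satisfies $h^0(\PP^2,\mathcal{I}_\Gamma\otimes\mathcal{T}_{\PP^2}(2d-2))=1$, so the irreducible locus of such $\Gamma$ is contained in the irreducible locus of zero-schemes of sections, and then shows the two loci coincide by computing that both have dimension $2n-1$ --- the syzygy locus via the Bohnhorst--Spindler count, the locus of zero-schemes by exhibiting a generically finite dominant map onto it from $D_{Betti}$. Your argument instead produces the resolution directly: the Euler sequence turns $\gamma$ into a $2\times 3$ matrix with a linear row and a degree-$(2d-1)$ row, the minors cut out $\Gamma$, and Hilbert--Burch (equivalently the Koszul sequence $0\to\OO_{\PP^2}(-4d+1)\to\mathcal{T}_{\PP^2}(-2d-1)\to\mathcal{I}_\Gamma\to0$ followed by a mapping cone over the Euler resolution of $\mathcal{T}_{\PP^2}(-2d-1)$, exactly in the style of Section \ref{sec: eff}) yields the claimed complex, with minimality read off from the degrees. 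The one point you flag as delicate is in fact easy: $\mathcal{T}_{\PP^2}(2d-2)$ is globally generated for $d\ge 1$, so the zero locus of a general section automatically has codimension $2$; no incidence argument or computer check is needed, and once codimension $2$ holds there is no ``degeneration with extra generators'' to worry about. Your approach buys a stronger statement (the resolution holds for \emph{every} section with finite zero locus) and explicit matrices; the paper's approach buys, as a byproduct, the dominance of the map $Z\mapsto\Gamma$ from $D_{Betti}$ to the locus of zero-schemes, which is what Proposition \ref{Prop:: InterpoTang} implicitly uses when it applies the lemma to the $\Gamma$ determined by a general $Z\in D_{Betti}$ rather than by a general section. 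If you were to substitute your proof, you should either supply that dominance separately or observe that your argument applies verbatim to the section determined by a general $Z$ once its zero locus is seen to be finite.
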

\begin{proof}
    If $\Gamma$ is any set of points with the above resolution, then 
    tensoring by $\mathcal{T}_{\PP^2}(2d-2)$ and taking the long exact sequence in cohomology shows that $h^0(\PP^2,\mathcal{I}_\Gamma\otimes \mathcal{T}_{\PP^2}(2d-2))=1$.
    Therefore, the locus of ideals $\mathcal{I}_\Gamma\in \PP^{2[n]}$ with the minimal free resolution above is contained in the locus of subschemes that are the zeroes of a section of $\mathcal{T}_{\PP^2}(2d-2)$. Consequently, the lemma follows if we show that these two families have the same dimension as they are both irreducible. This is what we show next.

    Let $\Gamma$ be the zero locus of a general section $\gamma$, and let $Z\subset\Gamma$ be a subscheme of length $n = 2d(d+1)$. Then $\gamma\in H^0(\PP^2,\mathcal{I}_Z\otimes \mathcal{T}_{\PP^2}(2d-2))\neq0$, so $Z\in D_{\tiny{Betti}}$. 
    Conversely, given a general element $Z\in D_{\tiny{Betti}}$ there is exactly one section (up to scalar multiple) in $H^0(\PP^2,\mathcal{I}_Z\otimes \mathcal{T}_{\PP^2}(2d-2))$ (by Lemma \ref{lem: qk sections} with $k=1$), which defines a scheme $\Gamma\supset Z$ as its zero locus.
    This defines a rational, generically finite, and dominant map from $D_{\tiny{Betti}}$ to the locus of schemes which are zeroes of a section of $\mathcal{T}_{\PP^2}(2d-2)$, which therefore has dimension equal to $\dim D_{\tiny{Betti}}=2n-1$.

    On the other hand, by \cite{BS} as explained in \cite{CM}, the dimension of the schemes with Resolution \eqref{res zero section} can be computed as \[3\cdot\binom{4d-1-2d+2}{2}+3\cdot\binom{2d+1-2d+2}{2}-2-\binom{4d-1-(2d+1)+2}{2}-9+1=2n-1.\]
\end{proof}

\medskip

\section{Explicit Examples}\label{section5}
\noindent
We started the paper asking what data from the minimal free resolution determines the SBLD and the Bridgeland walls of a moduli space $M(\xi)$. This section exhibits examples for which we fully answer this question. In all cases the answer is: the Betti table along with the $F$-admissibility of the minimal free resolution. 

\medskip\noindent
First, we run the \textit{stable base locus decomposition program} and describe the walls it produces in the stable base locus decomposition \footnote{We omit the unique chamber containing the component $B$ of non-reduced schemes.} of $\EFF(\PP^{2[n]})$ when $n=3,4,5,6,7,8,12$. These walls yield the full SBLD of $\EFF(\PP^{2[n]})$.

\medskip\noindent
Second, we verify that the destabilizing objects for $\PP^{2[n]}$ all occur as subcomplexes of the minimal free resolution of elements $Z\in \PP^{2[n]}$. More precisely, this section proves: 

\begin{prop}
For the examples in this section, every sheaf $U$ in the Hilbert scheme is $F$-admissible, where $F$ is a sheaf that destabilizes it in the sense of Bridgeland. 
\end{prop}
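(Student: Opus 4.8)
The plan is to treat the statement as a finite enumeration: for each of the values $n=2,3,4,5,6,12$ (and the tabulated extra cases $n=7,8$ that appear in the preceding section) one must run the \textit{stable base locus decomposition program} to completion and then, chamber by chamber, identify the Bridgeland destabilizing object $F$ for the generic sheaf in (the stable base locus of) that chamber and exhibit the $F$-admissibility of its minimal free resolution. So the proof is really a bookkeeping argument assembled from the machinery already in place: Theorem \ref{MAINdetailed} and Corollary \ref{cor: EffRigid} cover the outermost (primary) chamber and the divisor $\mathcal{U}^c$; the interpolation computations of Section \ref{sec: mov} cover the movable-cone wall in the triangular and tangential cases ($n=3,6$ and $n=12$ fall here, since $3=\binom{3}{2}$, $6=\binom{4}{2}$, $12=2\cdot2\cdot3$); and the remaining small cases are handled by direct computation, which for the Hilbert scheme of a few points on $\PP^2$ is entirely explicit.

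First I would fix notation and recall that for each of these $n$ the Picard rank of $\PP^{2[n]}$ is $2$ and the SBLD is a fan of finitely many rays in $\mathrm{NS}(\PP^{2[n]})_\Q\cong\Q^2$; by \cite{LZ} each wall is a Bridgeland wall, so to every chamber there is associated a destabilizing triangle $F\to U\to W\to\cdot$ for the generic $U$ entering the base locus there. The body of the proof is then a case-by-case table. For $n=2,3,4,5$ one writes down the Gaeta resolution of a generic $\mathcal I_Z$, applies Theorem \ref{MAINdetailed} to read off that $(\mathcal I_Z)_\bullet$ is $F$-admissible for $F$ the controlling destabilizing sheaf (a twist of $\OO$ or $\mathcal T_{\PP^2}$), then passes to the divisor $\mathcal U^c=D_{Betti}$ via Corollary \ref{cor: EffRigid} and Remark \ref{DivGaeta}, writes the divisorial Betti table, and checks that its minimal free resolution contains the resolution of the next destabilizing object — this is exactly the content of Proposition \ref{prop::destObjMov} in the triangular/tangential cases and is a short explicit matrix-reduction argument otherwise. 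For $n=6$ and $n=12$ one invokes Theorem \ref{TRI} (resp. Theorem \ref{MOV}) to get the movable cone, and for the single remaining chamber past the movable boundary (where the map to the Kronecker moduli space is an isomorphism, by \cite[Theorem 6.4]{CHW}) there is nothing more to do, since on the other side of $\MOV$ the base locus is everything and the destabilizing object is again read off from the Gaeta triangle. I would present this as a short enumerated list, one entry per $n$, each entry citing the relevant earlier result and displaying the Betti table together with the matrix shape $\left(\begin{smallmatrix}0&A\\B&C\end{smallmatrix}\right)$ that exhibits $F$-admissibility.

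The main obstacle I anticipate is the bookkeeping at the \emph{inner} walls for the cases that are neither triangular nor tangential — namely $n=4,5$ and the tabulated $n=7,8$ — where the nontrivial extremal effective divisor $J$ is \emph{not} cut out by Betti numbers (as in the $\PP^{2[7]}$ example in the introduction). There, Corollary \ref{cor: EffRigid}(a) only gives an inclusion $\mathbf B(D_V)\subset\mathcal U^c$, possibly strict, so one cannot simply quote a divisorial Betti table; instead one must solve the interpolation problem by hand for a generic point of $\mathbf B(D_V)$, which is where the controlling exceptional bundle of higher rank enters and the $F$-admissibility becomes genuinely nontrivial to verify (one has to exhibit the change of basis bringing the syzygy matrix into block form with $B$ equal to the minimal free resolution of $E_{-\beta}^{l}$). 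For the very small $n$ this can be done either by the interpolation arguments of Section \ref{sec: mov} adapted to the relevant exceptional bundle, or, failing a clean closed form, by an explicit Macaulay2 computation of the syzygy matrix and its block reduction — exactly the style of verification already used for $1<d<6$ in the proof of Lemma \ref{InterpolationTan}. Once every chamber of every one of these finitely many Hilbert schemes has been matched with a destabilizing object and the corresponding block decomposition of the syzygy matrix has been written down, the proposition follows by exhaustion.
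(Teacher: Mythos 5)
Your proposal is correct and follows essentially the same route as the paper: Section 5's verification is exactly the finite, case-by-case enumeration you describe, listing all Betti tables for each $n$, exhibiting the destabilizing subcomplex in block form for each syzygy locus, solving interpolation chamber by chamber, and cross-checking against the complete list of Bridgeland walls from \cite{ABCH} and \cite{LZ}. You also correctly identify the genuinely delicate points (inner walls where the Betti table does not change, reducible base loci, and the non-triangular/non-tangential cases requiring ad hoc interpolation), which is where the paper likewise spends its effort.
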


\medskip\noindent
In other words, Conjecture \ref{conj} holds for the examples of this section. 

\begin{remark}
A few remarks on the proposition.
\begin{enumerate}
    \item The previous proposition verifies that for these examples if $U$ is destabilized at $W_x$, the Bridgeland wall with center $x$, and is general in a component of the stable base locus of the divisor class $H+\frac{1}{2y}B-\epsilon H$ for all small enough $\epsilon >0$, then $x=\frac{3}{2}-y.$
    \item In fact, the stronger statement that 
    the Bridgeland destabilizing triangle $F \to U \to W \to \cdot$ makes $U$ into a $(F,W)$-cone (relaxing the notion of $(F,W)$-cone to allow $W$ to be a Bridgeland semi-stable two-term complex of line bundles) holds. 
    \item In many cases, several distinct Bridgeland destabilizing objects determine the same wall. In each case, we have primarily chosen one, but list each of the others. The proposition holds for all destabilizing objects $F$.
\end{enumerate}
\end{remark}

\begin{definition}
The \textit{syzygy locus}, denoted $\mathcal{G}_i$, is the closure of the locus of subschemes of length $n$ with fixed Betti table $G_i$.
\end{definition}

\medskip\noindent
The tables below run the stable base locus decomposition program from left to right, and then top to bottom: (1) Consider a general sheaf $U$ with Betti table $G$ which is $F$-admissible (column: Syzygies). (2) Solve the interpolation problem for $U$ (column: Interpolating bundle $V$). (3) Consider the stable base locus of the Brill-Noether divisor $D_V$ (column: $\mathbf{Bs}(D_V)$). We then continue the program on the next horizontal line with a new syzygy locus. 

\medskip\noindent
For example, let $U\in M(\xi)$ be a sheaf with Betti table $G$ and $F$-admissible. Then, we list the minimal free resolution of the bundle $V$ that solves the interpolation problem for $U$. The base locus $\mathbf{Bs}(D_V)$ is denoted by $\mathcal{G}_1$, if it is the syzygy locus of the Betti table $G_1$:

\begin{center}
\begin{tabular}{c|c|c} 
Syzygies & Interpolating bundle $V$ & $\mathbf{Bs}(D_V)$   \\  \hline
$G,F$ & $V$ & $\mathcal{G}_1$\\ 
\end{tabular}
\end{center}

\medskip\noindent
Thus, the column `Syzygies' records the wall-crossing induced by the syzygies.  Indeed, each horizontal line represents a wall in the SBLD. In the tables below, a dotted line is not a wall of the SBLD, but distinguishes loci with different Betti tables, or $F$-admissibility structures, within a chamber. 


\medskip\noindent
The class of the Brill-Noether divisor induced by the bundle $V$ is 
$$\mu(V)H-\tfrac{1}{2}B \quad \in \  \mathrm{Pic}(\PP^{2[n]})\otimes \mathbb{R} .$$
Hence we can deduce, from the column in the middle, the slope $\mu(D_V)$ of the ray spanned by $D_V$ so we include that (column: $\mu(D_V)$). If the locus of $F$-admissible sheaves $G,F$ has one of the geometric descriptions listed below, then we include it as well (column: Geometry).

\medskip\noindent
\textbf{Notation:}
\begin{itemize}
    \item Let ``Gaeta general" be the locus of sheaves which have a general Gaeta resolution.    
    \item Let $L_k(n)$ be the locus of schemes of length $n$ with a linear subscheme of length at least $k$.
    \item Let $Q_k(n)$ be the locus of schemes of length $n$ with a subscheme of length at least $k$ on a conic.
    \item Let $C_k(n)$ be the locus of schemes of length $n$ with a subscheme of length at least $k$ on a cubic.
\end{itemize}

\medskip\noindent
We also use the notation $\mathcal{U}^C$ defined in Section \ref{sec: eff}, and write $\mathcal{I}_k$ for ideal sheaves of $k$ points appearing as destabilizing objects.

\medskip\noindent
We verify that the Bridgeland destabilizing objects occur following \cite{LZ,ABCH}. We color in {\color{gray}gray} the sub complex of the resolution of $\mathcal{I}_Z$ that makes it $F$-admissible and exhibits the resolution of a Bridgeland destabilizing object. 
Note that we take the convention that if a power of an object destabilizes $U$, we consider the highest such power.



\subsection*{Geometry of syzygies on $\PP^{2[3]}$} There are only two Betti tables of ideal sheaves in $\PP^{2[3]}$:
\begin{align*}
    G: \quad  & \OOT(-3)^2\xrightarrow{\hspace{7mm}}{\color{gray}\OOT(-2)^3}  \\[1.5mm]
    G_1: \quad  & \OOT(-4)\xrightarrow{\hspace{7mm}}\OOT(-3)\oplus{\color{gray}\OOT(-1)}
\end{align*}

\medskip\noindent The syzygies that determine the nontrivial (primary) extremal wall of $\EFF(\PP^{2[3]})$ are the following. The controlling exceptional bundle of $M(\xi)=\PP^{2[3]}$ is $\mathcal{O}_{\PP^2}(1)$, and its character $e_{1}$ satisfies $\chi(e_{1},\xi)=0$. Then, the primary extremal divisor in $\EFF(\PP^{2[3]})$ is described by Corollary \ref{cor: EffRigid}, Part (b). Indeed, the Gaeta resolution of any $Z\in\PP^{2[3]}$, with Betti table $G$ above, is $\mathcal{O}_{\PP^2}(-2)^3$-admissible and can be seen to satisfy $h^i(\mathcal{O}_{\PP^2}(1)\otimes \mathcal{I}_Z) = 0$ for $i=0,1,2$ by tensoring $G$ by $\mathcal{O}_{\PP^2}(1)$. The ideal sheaves that fail to be $\mathcal{O}_{\PP^2}(-2)^3$-admissible, i.e., the locus $\mathcal{U}^c$, forms a divisor which spans the primary extremal ray of $\EFF(\PP^{2[3]})$. 
In \cite{Hui14}, $\mathcal{U}^c$ is described as the Brill-Noether divisor of the bundle $\mathcal{O}_{\PP^2}(1)$, which is exactly the locus $\mathcal{G}_1 = L_3(3)$.

\medskip\noindent
The syzygies that determine the next wall of $\EFF(\PP^{2[3]})$, which is the extremal wall of $\Mov(\PP^{2[3]})$, are the following. A generic $Z\in \mathcal{U}^c$ is $\mathcal{O}_{\PP^2}(-1)$-admissible. This means its minimal free resolution yields a triangle $ \mathcal{O}_{\PP^2}(-1)\to \mathcal{I}_Z\to W\to \cdot$. 
By tensoring the minimal free resolution of $\mathcal{I}_Z$ with the following bundle $M_1$,
$$0\xrightarrow{\hspace{7mm}} \mathcal{O}_{\PP^2}(-1)^4\xrightarrow{\hspace{7mm}} \mathcal{O}_{\PP^2}^6 \xrightarrow{\hspace{7mm}} M_1\xrightarrow{\hspace{7mm}} 0,$$  
we have that $h^i(M_1\otimes \mathcal{I}_Z)=0$ for $i=0,1,2$. It turns out that $M_1$ solves the interpolation problem for the general $Z\in \mathcal{U}^c$ and its Brill-Noether divisor $D_{M_1}$ fails to contain $\mathcal{U}^c=L_3(3)$ in its base locus. Consequently, the class of $D_{M_1}$ is in $\Mov(\PP^{2[3]})$, and moreover it is extremal as we can exhibit a curve class $\beta\subset \PP^{2[3]}$ that sweeps out $L_3(3)$ such that $\beta . D_{M_1}=0$. Such a curve is induced by a pencil of 3 points on a fixed line. Therefore, $D_{M_1}$ spans the primary extremal wall of $\MOV(\PP^{2[3]})$. Also, this shows that $L_3(3)$ is a component of the SBLD. The class $D_{M_1}$ is nef and has empty base locus; hence our program terminates here. 
Summarizing:

\begin{center}
\begin{tabular}{cc|c|cc} 
Geometry  &Syzygies& Interpolating bundle $V$& $\mathbf{Bs}(D_V)$ & $\mu(D_V)$\\[1mm]  \hline
Gaeta general & $G,\ \OOT(-2)$ & $\OOT(1)$ & $\mathcal{U}^c=L_3(3)$ & 1 \\[2mm] \hline
$L_3(3)$ & $G_1,\  \OOT(-1)$ & $M_1$  & $\emptyset$ &  2 \\[2mm]
\end{tabular}
\end{center}

\medskip\noindent
Citing \cite[\S 9]{ABCH}, below are the Bridgeland destabilizing objects needed to compute \textit{all} the walls for $\PP^{2[3]}$. They all occur in the minimal free resolutions. 

\begin{enumerate}
\item A rank 3 wall $W_{-5/2}$ corresponding to the destabilizing object $\mathcal{O}_{\PP^2}(-2)^3$.\\ This wall is also given by $\mathcal{I}_1(-1)$ and $\mathcal{T}_{\PP^2}(-3)$.
\item A rank 1 wall $W_{-7/2}$ corresponding to the destabilizing object $\mathcal{O}_{\PP^2}(-1)$.
\end{enumerate}

\medskip
\subsection*{Geometry of syzygies on $\PP^{2[4]}$}
There are 3 Betti tables of ideal sheaves in $\PP^{2[4]}$:
\begin{align*}
    G: \quad  & \OOT(-4)\xrightarrow{\hspace{7mm}}{\color{gray}\OOT(-2)^2}  \\[1.5mm]
    G_1: \quad  & \OOT(-4)\oplus{\color{gray}\OOT(-3)}\xrightarrow{\hspace{7mm}}\OOT(-3)\oplus{\color{gray}\OOT(-2)^2}  \\[1.5mm]
    G_2: \quad  & \OOT(-5)\xrightarrow{\hspace{7mm}}\OOT(-4)\oplus{\color{gray}\OOT(-1)}
\end{align*}

\medskip\noindent
The syzygies that determine the primary extremal wall of $\EFF(\PP^{2[4]})$ are entirely analogous to those determining the primary extremal wall of $\EFF(\PP^{2[3]})$ after replacing $\mathcal{O}_{\PP^2}(-2)^3$ by $\mathcal{O}_{\PP^2}(-2)^2$, $\mathcal{O}_{\PP^2}(1)$ by $\mathcal{T}_{\PP^2}$, and $L_3(3)$ by $L_3(4)$.

\medskip\noindent
The syzygies that determine the extremal wall of $\Mov(\PP^{2[4]})$ are also analogous to those determining the extremal wall of $\Mov(\PP^{2[3]})$ after replacing $\mathcal{O}_{\PP^2}(-1)$ by $\mathcal{I}_1(-1)$, $M_1$ with the bundle 
$$0\xrightarrow{\hspace{7mm}} \mathcal{O}_{\PP^2}(-1)^2\xrightarrow{\hspace{7mm}} \mathcal{O}_{\PP^2}^2\oplus \OO(1)^2\xrightarrow{\hspace{7mm}} M_1\xrightarrow{\hspace{7mm}} 0,$$ and the empty base locus with $L_4(4)$.

\medskip\noindent
Finally, the syzygies that determine the second wall of $\Mov(\PP^{2[4]})$ are entirely analogous to those determining the extremal wall of $\Mov(\PP^{2[3]})$ after replacing $M_1$ with the bundle 
$$0\xrightarrow{\hspace{7mm}} \mathcal{O}_{\PP^2}(-1)^6\xrightarrow{\hspace{7mm}} \mathcal{O}_{\PP^2}^8\xrightarrow{\hspace{7mm}} M_2\xrightarrow{\hspace{7mm}} 0.$$
Our program terminates here. Summarizing:

\begin{center}
\begin{tabular}{cc|c|cc} 
Geometry  &Syzygies& Interpolating bundle $V$& $\mathbf{Bs}(D_V)$ & $\mu(D_V)$\\[1mm]  \hline
Gaeta general & $G,\ \OOT(-2)$ & $\mathcal{T}_{\PP^2}$ & $\mathcal{U}^c=L_3(4)$ & 3/2 \\[2mm] \hline
$L_3(4)$ & $G_1,\  \mathcal{I}_1(-1)$ & $M_1$  & $L_4(4)$ &  2 \\[2mm]\hline
$L_4(4)$ & $G_2,\  \OOT(-1)$ & $M_2$  & $\emptyset$ &  3 \\[2mm]
\end{tabular}
\end{center}

\medskip\noindent
Citing \cite[\S 9]{ABCH}, below are the Bridgeland destabilizing objects needed to compute \textit{all} the walls for $\PP^{2[4]}$. They all occur in the minimal free resolutions.

\begin{enumerate}
\item A rank 2 wall $W_{-3}$ corresponding to the destabilizing object $\mathcal{O}_{\PP^2}(-2)^2$.
\item A rank 1 wall $W_{-7/2}$ corresponding to the destabilizing object $\mathcal{I}_1(-1)$.
\item A rank 1 wall $W_{-9/2}$ corresponding to the destabilizing object $\mathcal{O}_{\PP^2}(-1)$.
\end{enumerate}

\subsection*{Geometry of syzygies on $\PP^{2[5]}$}
There are 3 Betti tables of ideal sheaves in $\PP^{2[5]}$:
\begin{align*}
    G: \quad  & \OOT(-4)^2\xrightarrow{\hspace{7mm}}\OOT(-3)^2\oplus{\color{gray}\OOT(-2)}  \\[1.5mm]
    G_1: \quad  & \OOT(-5)\oplus{\color{gray}\OOT(-3)}\xrightarrow{\hspace{7mm}}\OOT(-4)\oplus{\color{gray}\OOT(-2)^2}  \\[1.5mm]
    G_2: \quad  & \OOT(-6)\xrightarrow{\hspace{7mm}}\OOT(-5)\oplus{\color{gray}\OOT(-1)}
\end{align*}

\medskip\noindent
The syzygies associated to the extremal wall of $\EFF(\PP^{2[5]})$ are the following.  The controlling exceptional bundle of
$M(\xi)=\PP^{2[5]}$ is $\mathcal{O}_{\PP^2}(2)$, and its character $e_{2}$ satisfies $\chi(e_2,\xi)>0$. It follows that the generalized Gaeta resolution coincides with the Gaeta resolution. Hence, any $Z\in \mathcal{U}=\mathcal{G}$ is $\mathcal{O}_{\PP^2}(-2)$-admissible so its minimal free resolution yields the triangle $\mathcal{O}_{\PP^2}(-2)\to \mathcal{I}_Z\to W\to\cdot  .$ 
By tensoring the minimal free resolution of $\mathcal{I}_Z$ with the following bundle $M$,
$$0\xrightarrow{\hspace{7mm}} \mathcal{O}_{\PP^2}^2\xrightarrow{\hspace{7mm}} \mathcal{O}_{\PP^2}(1)^4\xrightarrow{\hspace{7mm}} M \xrightarrow{\hspace{7mm}} 0,$$
we have that $h^i(M\otimes \mathcal{I}_Z)=0$ for $i=0,1,2$. 
It turns out that $M$ solves the the interpolation problem for the general $Z\in \PP^{2[5]}$ and its Brill-Noether divisor $D_{M}$ fails to have $\mathcal{U}^c=L_4(5)$ in its base locus. Consequently, the class of $D_{M}$ is in $\EFF(\PP^{2[4]})$, and moreover it is extremal as we can exhibit a curve class $\beta\subset \PP^{2[5]}$ that sweeps out $\PP^{2[5]}$ such that $\beta . D_{M}=0$. 
Such a curve is induced by a pencil of 4 points in a line, plus a general fixed point. 
Therefore, $D_{M}$ spans the primary extremal wall of $\EFF(\PP^{2[5]})$.

\medskip\noindent
The syzygies that determine the second wall of $\Mov(\PP^{2[5]})$ are entirely analogous to those determining the extremal wall of $\Mov(\PP^{2[4]})$ after replacing $M_1$ with the bundle 
$$0\xrightarrow{\hspace{7mm}} \mathcal{O}_{\PP^2}(-1)^4\xrightarrow{\hspace{7mm}} \mathcal{O}_{\PP^2}^4\oplus \OO(1)^2\xrightarrow{\hspace{7mm}} M_1\xrightarrow{\hspace{7mm}} 0$$ and $L_4(4)$ with $L_5(5)$.

\medskip\noindent
The syzygies that determine the third wall of $\Mov(\PP^{2[5]})$ are entirely analogous to those determining the second wall of $\Mov(\PP^{2[4]})$ after replacing $M_2$ with the bundle 
$$0\xrightarrow{\hspace{7mm}} \mathcal{O}_{\PP^2}(-1)^8\xrightarrow{\hspace{7mm}} \mathcal{O}_{\PP^2}^{10}\xrightarrow{\hspace{7mm}} M_2\xrightarrow{\hspace{7mm}} 0.$$
In particular, our program terminates here. Summarizing:

\begin{center}
\begin{tabular}{cc|c|cc} 
Geometry  &Syzygies& Interpolating bundle $V$& $\mathbf{Bs}(D_V)$ & $\mu(D_V)$\\[1mm]  \hline
Gaeta general & $G,\ \OOT(-2)$ & $M$ & $\mathcal{U}^c=L_4(5)$ & 2 \\[2mm] \hline
$L_4(5)$ & $G_1,\  \mathcal{I}_1(-1)$ & $M_1$  & $L_5(5)$ &  3 \\[2mm]
\hline
$L_5(5)$ & $G_2,\  \OOT(-1)$ & $M_2$  & $\emptyset$ &  4 \\[2mm]
\end{tabular}
\end{center}

\medskip\noindent
Citing \cite[\S 9]{ABCH}, below are the Bridgeland destabilizing objects needed to compute \textit{all} the walls for $\PP^{2[5]}$. They all occur in the minimal free resolutions.

\begin{enumerate}
\item A rank 1 wall $W_{\frac{-7}{2}}$ corresponding to the destabilizing object $\mathcal{O}_{\PP^2}(-2)$.
\\ This wall is also given by $\mathcal{I}_2(-1)$.
\item A rank 1 wall $W_{\frac{-9}{2}}$ corresponding to the destabilizing object $\mathcal{I}_1(-1)$.
\item A rank 1 wall $W_{\frac{-11}{2}}$ corresponding to the destabilizing object $\mathcal{O}_{\PP^2}(-1)$.
\end{enumerate}

\subsection*{Geometry of syzygies on $\PP^{2[6]}$}
There are 5 Betti tables of ideal sheaves in $\PP^{2[6]}$:
\begin{align*}
    G: \quad  & \OOT(-4)^3\xrightarrow{\hspace{7mm}}{\color{gray}\OOT(-3)^4}  \\[1.5mm]
    G_1: \quad  & \OOT(-5)\xrightarrow{\hspace{7mm}}\OOT(-3)\oplus{\color{gray}\OOT(-2)}  \\[1.5mm]
    G_2: \quad  & \OOT(-5)\oplus{\color{gray}\OOT(-4)}\xrightarrow{\hspace{7mm}}\OOT(-4)\oplus{\color{gray}\OOT(-3)\oplus\OOT(-2)}\\[1.5mm]
    G_3: \quad  & \OOT(-6)\oplus{\color{gray}\OOT(-3)}\xrightarrow{\hspace{7mm}}\OOT(-5)\oplus{\color{gray}\OOT(-2)^2}\\[1.5mm]
    G_4: \quad  & \OOT(-7)\xrightarrow{\hspace{7mm}}\OOT(-6)\oplus{\color{gray}\OOT(-1)}
\end{align*}
where $\mathcal{G}_1=Q_6(6)$, $\mathcal{G}_2=L_4(6)$, $\mathcal{G}_3=L_5(6)$ and $\mathcal{G}_4=L_6(6)$.

The syzygies determining the walls $\EFF(\PP^{2[6]})$ are entirely analogous to the walls of the previous cases using the sheaf listed under syzygies below (whose corresponding complex is colored gray in the complexes above), the interpolating bundle listed in the corresponding column below, and the base locus with the corresponding entry as well. In all further examples, we only remark on walls which exhibit new phenomena.

\medskip\noindent
We summarize the base locus decomposition program for $\PP^{2[6]}$ below.

\begin{center}
\begin{tabular}{cc|c|cc} 
Geometry  &Syzygies& Interpolating bundle $V$& $\mathbf{Bs}(D_V)$ & $\mu(D_V)$\\[1mm]  \hline
Gaeta general & $G,\ \OOT(-3)$ & $\OOT(2)$ & $\mathcal{U}^c=Q_6(6)$ & 2 \\[2mm] \hline
$Q_6(6)$ & $G_1,\  \OOT(-2)$ & $coker\left(\OOT^3\to\OOT(1)^5\right)$  & $L_4(6)$ &  5/2 \\[2mm]
\hline
$L_4(6)$ & $G_2,\  \mathcal{I}_2(-1)$ & $coker\left(\OOT(-1)^2\to\OOT(1)^4\right)$  & $L_5(6)$ &  3 \\[2mm]
\hline
$L_5(6)$ & $G_3,\  \mathcal{I}_1(-1)$ & $coker\left(\OOT(-1)^6\to\OOT^6\oplus\OOT(1)^2\right)$  & $L_6(6)$ &  4 \\[2mm]
\hline
$L_6(6)$ & $G_4,\  \OOT(-1)$ & $coker\left(\OOT(-1)^{10}\to\OOT^{12}\right)$  & $\emptyset$ &  5 \\[2mm]
\end{tabular}
\end{center}

\medskip\noindent
Citing \cite[\S 9]{ABCH}, below are the Bridgeland destabilizing objects needed to compute \textit{all} the walls for $\PP^{2[6]}$. They all occur in the minimal free resolutions.

\begin{enumerate}
\item A rank 4 wall $W_{\frac{-7}{2}}$ corresponding to the destabilizing object $\mathcal{O}_{\PP^2}(-3)^4$.
\\ This wall is also given by $\mathcal{I}_1(-2)$, $\mathcal{I}_3(-1)$, and $\mathcal{T}_{\PP^2}(-4)$. Note that $\mathcal{I}_1(-2)$ and $\mathcal{I}_3(-1)$ only destabilize some of the objects destabilized along the wall in contrast to all previous destabilizing objects.
\item A rank 1 wall $W_{-4}$ corresponding to the destabilizing object $\mathcal{O}_{\PP^2}(-2)$.
\item A rank 1 wall $W_{\frac{-9}{2}}$ corresponding to the destabilizing object $\mathcal{I}_2(-1)$.
\item A rank 1 wall $W_{\frac{-11}{2}}$ corresponding to the destabilizing object $\mathcal{I}_1(-1)$.
\item A rank 1 wall $W_{\frac{-13}{2}}$ corresponding to the destabilizing object $\mathcal{O}_{\PP^2}(-1)$.
\end{enumerate}

    \subsection*{Geometry of syzygies on $\PP^{2[7]}$}  \medskip\noindent
The list below contains all the Betti tables that occur for ideal sheaves in $\PP^{2[7]}$ ordered by the dimension of their syzygy locus. This example exhibits reducible base locus.
\begin{align*}
G: \quad  & \mathcal{O}_{\PP^2}(-5)\oplus {\color{gray}\mathcal{O}_{\PP^2}(-4)}\xrightarrow{\hspace{7mm}}{\color{gray}\mathcal{O}_{\PP^2}(-3)^3}  \\[1.5mm]
G_1: \quad  & \mathcal{O}_{\PP^2}(-5)\oplus{\color{gray}\mathcal{O}_{\PP^2}(-4)^2}\xrightarrow{\hspace{7mm}}\mathcal{O}_{\PP^2}(-4)\oplus{\color{gray}\mathcal{O}_{\PP^2}(-3)^3}  \\[1.5mm]
G_2: \quad  & \mathcal{O}_{\PP^2}(-5)^2\xrightarrow{\hspace{7mm}}\mathcal{O}_{\PP^2}(-4)^2\oplus{\color{gray}\mathcal{O}_{\PP^2}(-2)} \\[1.5mm]
G_3: \quad  &\mathcal{O}_{\PP^2}(-6)\oplus{\color{gray}\mathcal{O}_{\PP^2}(-4)}\xrightarrow{\hspace{7mm}}\mathcal{O}_{\PP^2}(-5)\oplus{\color{gray}\mathcal{O}_{\PP^2}(-3)\oplus(-2)}  \\[2mm]
G_4: \quad  & \mathcal{O}_{\PP^2}(-7)\oplus{\color{gray}\mathcal{O}_{\PP^2}(-3)}\xrightarrow{\hspace{7mm}}\mathcal{O}_{\PP^2}(-6)\oplus{\color{gray}\mathcal{O}_{\PP^2}(-2)^2}  \\[1.5mm]
G_5: \quad  & \mathcal{O}_{\PP^2}(-8)\xrightarrow{\hspace{7mm}}\mathcal{O}_{\PP^2}(-7)\oplus{\color{gray}\mathcal{O}_{\PP^2}(-1)}  \\[1.5mm]
\end{align*}

\noindent
We only discuss walls displaying new behavior. 
For $\PP^{2[7]}$, the only two such walls are the second and third walls of $\EFF(\PP^{2[7]})$. The second wall is the first instance where the Betti numbers of the generic element of the base locus do not change from one chamber to the next. The third wall is the first instance where the stable base locus of a chamber is reducible and each of the two irreducible components has a distinct destabilizing object. 
Let us elaborate. 

\medskip\noindent
The syzygies that determine the second wall of $\EFF(\PP^{2[7]})$ are the following. 
The base locus of the extremal effective chamber is $D_{E_{12/5}} = \mathcal{U}^C = Q_6(7)$.
We want to show that a generic $Z\in Q_6(7)$ is $\mathcal{I}_{p}(-2)$-admissible. 
To see this, first note that (similarly to the example in the Introduction), the general element of $Q_6(7)$ has Betti table $G$, i.e., has a  resolution 
\begin{align*}
0\xrightarrow{\hspace{7mm}} \OO(-4)\oplus\OO(-5)\overset{\left(
\begin{array}{cc}
l_1  & q_1 \\ 
 l_2 & q_2 \\
  l_3 & q_3 \\
\end{array}\right)}{\xrightarrow{\hspace{7mm}}} \OO(-3)^3\xrightarrow{\hspace{7mm}} \mathcal{I}_Z\xrightarrow{\hspace{7mm}} 0.
\end{align*}
In other words, any sheaf in $\mathcal{G}$ has three cubic generators, $c_1$, $c_2$, and $c_3$, one quartic relation, $q = \ell_1c_1+\ell_2c_2+\ell_3c_3$ where $\ell_i$ is degree one, and one quintic relation $Q = q_1c_1+q_2c_2+q_3c_3$ where $q_i$ is degree two.
A sheaf in $\mathcal{G}$ lies in $Q_6(7)$ exactly when, up to change of coordinates, one of the $\ell_i$ is zero.
With that zero, the minimal free resolution yields a triangle $ \mathcal{I}_1(-2)\to \mathcal{I}_Z\to W\to \cdot$.
From there, the rest of the discussion is analogous to previous cases with the interpolating bundle $M_1$ given below
$$0\xrightarrow{\hspace{7mm}} \mathcal{O}_{\PP^2}\xrightarrow{\hspace{7mm}} \mathcal{O}_{\PP^2}(1)\oplus \OO(2)^2\xrightarrow{\hspace{7mm}} M_1\xrightarrow{\hspace{7mm}} 0.$$  

\medskip\noindent
The syzygies associated to the third wall of $\EFF(\PP^{2[7]})$ are the following. As mentioned above, this wall exhibits a reducible stable base locus where both irreducible components are destabilized. Consider a general $Z\in \mathcal{G}_1$. The Betti tables of $\mathcal{I}_Z$ and $M_1$ imply that $h^0(M_1\otimes \mathcal{I}_Z)\neq 0$. Hence, $Z\in \textbf{Bs}(D_{M_1})$. Furthermore, if we consider the curve $\beta\subset \mathcal{G}_1=L_4(7)$ induced by a general pencil of 4 points on a fixed line, and 3 additional general fixed points, then $\beta \cdot D=0$ where the class $D$ is a multiple of $3H-\tfrac{1}{2}\Delta$. Since $\mu(D_{M_1})<3$, then $L_4(7)\subset \mathbf{Bs}(D_{M_1})$. 
Since $\mathcal{I}_Z$ is $\mathcal{I}_3(-1)$-admissible, then the following bundle is numerically orthogonal to $\mathcal{I}_Z$  $$0\xrightarrow{\hspace{7mm}} \OO^6\xrightarrow{\hspace{7mm}} \OO(1)^9\xrightarrow{\hspace{7mm}} M_2\xrightarrow{\hspace{7mm}} 0.$$
If follows from the minimal free resolutions of $\mathcal{I}_Z$ and $M_2$ that $h^0(M_2\otimes \mathcal{I}_Z)=0$ for $i=0,1,2$. The existence of the curve $\beta$ above implies that $M_2$ solves the interpolation problem for a generic $Z\in \mathcal{G}_1$. Then, $\mathcal{G}_1$ is not in the base locus of $D_{M_2}$. Similarly, a general pencil of 7 points on a fixed smooth conic induces a curve $\alpha\subset Q_7(7)$ with $\alpha\cdot D_{M_2}=0$. Hence $Q_7(7)\subset \mathbf{Bs}(D_{M_1})$. Note that $M_2$ also solves the interpolation problem for a generic element in $\mathcal{G}_2=Q_7(7)$. Consequently, $\mathcal{G}_1\cup \mathcal{G}_2$ both enter the base locus at the wall spanned by $D_{M_2}$.

\medskip\noindent
We summarize the base locus decomposition program for $\PP^{2[7]}$ below.

\begin{center}
\begin{tabular}{cc|c|cc} 
Geometry  &Syzygies& Interpolating bundle $V$& $\mathbf{Bs}(D_V)$ & $\mu(D_V)$\\[1mm]  \hline
Gaeta general & $G,\ \mathcal{T}_{\PP^2}(-4)$ & $E_{12/5}$ & $\mathcal{U}^c=Q_6(7)$ & 12/5 \\[2mm] \hline
$Q_6(7)$ & $G,\  \mathcal{I}_1(-2)$ & $M_1$  & $L_4(7)\cup Q_7(7)$ &  5/2 \\[2mm] \hline
$L_4(7) $ & $G_1, \mathcal{I}_3(-1)$ & $M_2$ & $L_5(7)$ &  3 \\[2mm] \hdashline
$Q_7(7)$ & $G_2,\  \mathcal{O}_{\PP^2}(-2)$ & $M_2$ & $L_5(7)$ &  3 \\[2mm] \hline
$L_5(7)$ & $G_3,\  \mathcal{I}_2(-1)$ & $coker\left(\OOT(-1)^4\to\OOT^2\oplus\OOT(1)^4\right)$ & $L_6(7)$ &  4 \\[2mm] \hline 
$L_6(7)$ & $G_4,\  \mathcal{I}_1(-1)$ & $coker\left(\OOT(-1)^8\to\OOT^8\oplus\OOT(1)^2\right)$ & $L_7(7)$ & 5  \\[2mm] \hline 
$L_7(7)$ & $G_5,\  \mathcal{O}_{\PP^2}(-1)$ &  $coker\left(\OOT(-1)^{12}\to\OOT^{14}\right)$  & $\emptyset$ & 6  \\[2mm] 
\end{tabular}
\end{center}

\medskip\noindent
\medskip\noindent
Citing \cite[\S 9]{ABCH}, below are the Bridgeland destabilizing objects needed to compute \textit{all} the walls for $\PP^{2[7]}$. They all occur in the minimal free resolutions.

\begin{enumerate}
\item A rank 2 wall $W_{\frac{-39}{10}}$ corresponding to the destabilizing object $\mathcal{T}_{\PP^2}(-4)$.
\item A rank 1 wall $W_{-4}$ corresponding to the destabilizing object $\mathcal{I}_1(-2)$.
\item A rank 1 wall $W_{\frac{-9}{2}}$ corresponding to the destabilizing objects $\mathcal{I}_3(-1)$ and $\mathcal{O}_{\PP^2}(-2)$. 
\item A rank 1 wall $W_{\frac{-11}{2}}$ corresponding to the destabilizing object $\mathcal{I}_2(-1)$.
\item A rank 1 wall $W_{\frac{-13}{2}}$ corresponding to the destabilizing object $\mathcal{I}_1(-1)$.
\item A rank 1 wall $W_{\frac{-15}{2}}$ corresponding to the destabilizing object $\mathcal{O}_{\PP^2}(-1)$.
\end{enumerate}

\subsection*{Geometry of syzygies on $\PP^{2[8]}$}
\medskip\noindent
The list below contains all the Betti tables that occur for ideal sheaves in $\PP^{2[8]}$ ordered by the dimension of their syzygy locus.  

\begin{align*}
G: \quad  &   \mathcal{O}_{\PP^2}(-5)^2\xrightarrow{\hspace{7mm}}\mathcal{O}_{\PP^2}(-4)\oplus {\color{gray}\mathcal{O}_{\PP^2}(-3)^2}  \\[1.5mm]
G_1: \quad  & \mathcal{O}_{\PP^2}(-5)^2\oplus{\color{gray}\mathcal{O}_{\PP^2}(-4)}\xrightarrow{\hspace{7mm}}\mathcal{O}_{\PP^2}(-4)^2\oplus{\color{gray}{\mathcal{O}_{\PP^2}(-3)^2}}  \\[1.5mm]
G_2: \quad  &  \mathcal{O}_{\PP^2}(-6)\oplus {\color{gray}\mathcal{O}_{\PP^2}(-4)^2}\xrightarrow{\hspace{7mm}}\mathcal{O}_{\PP^2}(-5)\oplus {\color{gray}\mathcal{O}_{\PP^2}(-3)^3}\\[2mm]
G_3: \quad  & \mathcal{O}_{\PP^2}(-6)\xrightarrow{\hspace{7mm}}\mathcal{O}_{\PP^2}(-4)\oplus{\color{gray}\mathcal{O}_{\PP^2}(-2) } \\[2mm]
G_4: \quad  &\mathcal{O}_{\PP^2}(-6)\oplus{\color{gray}\mathcal{O}_{\PP^2}(-5)}\xrightarrow{\hspace{7mm}}\mathcal{O}_{\PP^2}(-5)\oplus{\color{gray}\mathcal{O}_{\PP^2}(-4)\oplus\mathcal{O}_{\PP^2}(-2)}  \\[1.5mm]
G_5: \quad  & \mathcal{O}_{\PP^2}(-7)\oplus{\color{gray}\mathcal{O}_{\PP^2}(-4)}\xrightarrow{\hspace{7mm}}\mathcal{O}_{\PP^2}(-6)\oplus{\color{gray}\mathcal{O}_{\PP^2}(-3)\oplus\mathcal{O}_{\PP^2}(-2)}  \\[1.5mm]
G_6: \quad  & \mathcal{O}_{\PP^2}(-8)\oplus{\color{gray}\mathcal{O}_{\PP^2}(-3)}\xrightarrow{\hspace{7mm}}\mathcal{O}_{\PP^2}(-7)\oplus{\color{gray}\mathcal{O}_{\PP^2}(-2)^2}  \\[1.5mm]
G_7: \quad  & \mathcal{O}_{\PP^2}(-9)\xrightarrow{\hspace{7mm}}\mathcal{O}_{\PP^2}(-8)\oplus{\color{gray}\mathcal{O}_{\PP^2}(-1)}  \\[1.5mm]
\end{align*}

\noindent
The syzygies that determine the extremal divisor of $\EFF(\PP^{2[8]})$ are the following. The controlling exceptional bundle of
$M(\xi)=\PP^{2[8]}$ is $\OO(3)$  and we have that $\chi(\OO(3),\xi)>0$. By Theorem \ref{MAINdetailed}, Part (a), the Gaeta resolution of a general $Z\in \PP^{2[8]}$ is $\mathcal{O}_{\PP^2}(-3)^2$-admissible and gives rise to the triangle $ \mathcal{O}_{\PP^2}(-3)^3\to \mathcal{I}_Z\to W\to \cdot$. This implies that the following bundle is numerically orthogonal to $\mathcal{I}_Z$ 
$$0\xrightarrow{\hspace{7mm}} \OO(1)^2\xrightarrow{\hspace{7mm}} \mathcal{O}_{\PP^2}(2)^5\xrightarrow{\hspace{7mm}} M\xrightarrow{\hspace{7mm}} 0.$$

\medskip\noindent
The Brill-Noether divisor $D_M$ generates the extremal ray of $\EFF(\PP^{2[8]})$. Indeed, the extremality of $D_M$ follows from considering a moving curve $\beta\subset \PP^{2[8]}$ induced by a general pencil of 8 points on a fixed smooth cubic and observing that $\beta \cdot D_M=0$. On the other hand, $D_M$ is effective as $h^i(M\otimes \mathcal{I}_Z)=0$ for $i=0,1,2$, which follows from the resolutions of $M$ and $\mathcal{I}_Z$.

\medskip\noindent
The base locus of $D_M$ consists of two components, $L_4(8)$ and $\mathcal{G}_1$, both of which are involved in the next wall. We include the next wall as the generic element of both irreducible components are Bridgeland destabilized but, in contrast to the case for 7 points, the base locus in the following chamber is still reducible.

\medskip\noindent
The syzygies associated to the second wall of $\EFF(\PP^{2[8]})$ are the following. 
We first deal with the syzygies coming from $L_4(8)$ (as we will see below).
The generic $Z\in \PP^{2[8]}$ has 2 generators $c,c'$ of degree 3, and a generator $q$ of degree 4. The Betti table $G$ tells us there are two syzygies in degree 5 of the following form: 
$$rc+r'c'=lq,$$
for some polynomials $r,r'$ of degree 2 and $l$ of degree 1. 
Now, consider $R \in \PP^{2[8]}$ with Betti table $G$, and whose generators of degree 3, denoted $c,c'$, have a special syzygy in degree 5 as follows: there exist quadratic polynomials $y,y'$ such that 
$$yc+y'c'=0.$$ 
Thus, there is  a triangle $\mathcal{I}_4(-1)\to \mathcal{I}_R\to W\to $, where 
$\OO(-5)\hookrightarrow \OO(-3)^2\twoheadrightarrow \mathcal{I}_4(-1)$. Observe that this triangle implies that $h^0(M\otimes\mathcal{I}_R)=1$, hence $\mathcal{I}_4$-admissible sheaves, with Betti table $G$, are in the base locus of $D_M$; these sheaves form the locus $L_4(8)$. Also, this triangle implies that the following bundle is numerically orthogonal to $\mathcal{I}_R$:
$$0\xrightarrow{\hspace{7mm}} \mathcal{O}_{\PP^2}^2\xrightarrow{\hspace{7mm}} \mathcal{O}_{\PP^2}(1)^2\oplus \OO(2)^2\xrightarrow{\hspace{7mm}} M_1\xrightarrow{\hspace{7mm}} 0.$$

\medskip\noindent
Then, $h^i(M_1\otimes \mathcal{I}_R)=0$, with $i=0,2$ follows by tensoring the resolution of $\mathcal{I}_R$ with $M_1$. It turns out that $M_1$ solves the the interpolation problem for $\mathcal{I}_R$ and its Brill-Noether divisor $D_{M_1}$ does not have $L_4(8)$ in is base locus. In order to show that $D_{M_1}$ spans a wall we must exhibit a curve class that sweeps out $L_4(8)$ whose intersection with $D_{M_1}$ is zero. Such a curve is induced by a pencil of 4 points on a fixed line (and 4 general fixed points). Therefore, $D_{M_1}$ spans the wall of $\EFF(\PP^{2[8]})$ where any $\mathcal{I}_R\in L_4(8)$ enters the base locus.

\medskip\noindent
We now turn to the syzygies coming from $\mathcal{G}_1$ (as we will see below). 
Indeed, consider $Z\in \mathcal{G}_1$ and note $h^1(M\otimes \mathcal{I}_Z)\neq 0$. Thus  $Z$ is in the base locus of $D_M$. Further, we have that $\mathcal{I}_Z$ is $\mathcal{I}_1(-2)$-admissible, where $\OO(-4)\hookrightarrow \OO(-3)^2\twoheadrightarrow \mathcal{I}_p(-2)$. Hence, $\mathcal{I}_Z$ fits into the following triangle $\mathcal{I}_p(-2)\to \mathcal{I}_Z\to W\to $, which implies that $M_1$ is numerically orthogonal to $\mathcal{I}_Z$. It turns out that $M_1$ solves the interpolation problem for $Z$. Also, the curve $\beta\subset \mathcal{G}_1=Q_7(8)$ induced by the general pencil of 7 points on a fixed smooth conic (and an 8th general fixed point) implies that $\mathcal{G}_1\subset \mathbf{Bs}(D_M)$. Hence, $L_4(8)\cup \mathcal{G}_1$ both enter the base locus at the wall spanned by $D_{M_1}$.

\medskip\noindent
Note that the syzygy locus $\mathcal{G}_4$ is in the closure of $\mathcal{G}_3$ and $\mathcal{G}_2$ and the destabilizing objects coming from it is not listed below because it fails to be general in any stable base locus component. Geometrically, $\mathcal{G}_4$ is the locus of 8 points lying on a reducible conic such that 5 of the points lie on one line and 3 lie on the other. Here we see that using syzygies allows us to refine the information gleaned from Bridgeland stability and the stable base locus decomposition.

\medskip\noindent
The remaining walls are analogous to previous discussed examples, so we summarize the base locus decomposition program for $\PP^{2[8]}$ below.

\begin{center}
\begin{tabular}{cc|c|cc} 
Geometry  &Syzygies& Interpolating bundle $V$& $\mathbf{Bs}(D_V)$ & $\mu(D_V)$\\[1mm]  \hline
Gaeta general & $G,\ \mathcal{O}_{\PP^2}(-3)^3$ & $M$ & $L_4(8) \cup Q_7(8)$ & 8/3 \\[2mm] \hline
$L_4(8)$ & $G,\  \mathcal{I}_4(-1)$ & $M_1$  & $L_5(8)\cup Q_8(8)$ &  3 \\[2mm] \hdashline
$Q_7(8) $ & $G_1, \mathcal{I}_1(-2)$ & $M_1$ & $L_5(8)\cup Q_8(8)$ &  3 \\[2mm] \hline
$Q_8(8)$ & $G_3,\  \mathcal{O}_{\PP^2}(-2)$ & $coker\left(\OOT^{10}\to\OOT(1)^{14}\right)$ & $L_5(8)$ & 7/2 \\[2mm] \hline
$L_5(8)$ & $G_2, \mathcal{I}_3(-1)$ & $coker\left(\OOT(-1)\to\OOT\oplus\OOT(1)^3\right)$ & $L_6(8)$ &  4 \\[2mm] \hline
$L_6(8)$ & $G_5, \mathcal{I}_2(-1)$ & $coker\left(\OOT(-1)^6\to\OOT^4\oplus\OOT(1)^4\right)$ & $L_7(8)$ &  5 \\[2mm] \hline
$L_7(8)$ & $G_6, \mathcal{I}_1(-1)$ & $coker\left(\OOT(-1)^{10}\to\OOT^{10}\oplus\OOT(1)^2\right)$ & $L_8(8)$ &  6 \\[2mm] \hline
$L_8(8)$ & $G_7,\  \mathcal{O}_{\PP^2}(-1)$ & $coker\left(\OOT(-1)^{14}\to\OOT^{16}\right)$ & $\emptyset$ &  7 \\[2mm] 
\end{tabular}
\end{center}

\medskip\noindent
Citing \cite[\S 9]{ABCH}, below are the Bridgeland destabilizing objects needed to compute \textit{all} the walls for $\PP^{2[8]}$. They all occur in the minimal free resolutions.

\begin{enumerate}
\item A rank 2 wall $W_{\frac{-25}{6}}$ corresponding to the destabilizing object $\mathcal{O}_{\PP^2}(-3)^2$.
\item A rank 1 wall $W_{\frac{-9}{2}}$ corresponding to the destabilizing objects $\mathcal{I}_4(-1)$ and $\mathcal{I}_1(-2)$. 
\item A rank 1 wall $W_{-5}$ corresponding to the destabilizing object $\mathcal{O}_{\PP^2}(-2)$.
\item A rank 1 wall $W_{\frac{-11}{2}}$ corresponding to the destabilizing object $\mathcal{I}_3(-1)$.
\item A rank 1 wall $W_{\frac{-13}{2}}$ corresponding to the destabilizing object $\mathcal{I}_2(-1)$.
\item A rank 1 wall $W_{\frac{-15}{2}}$ corresponding to the destabilizing object $\mathcal{I}_1(-1)$.
\item A rank 1 wall $W_{\frac{-17}{2}}$ corresponding to the destabilizing object $\mathcal{O}_{\PP^2}(-1)$.
\end{enumerate}

\subsection*{Geometry of syzygies on $\PP^{2[12]}$}  \medskip\noindent
The list below contains the Betti tables of the generic element of each irreducible stable base locus component in $\PP^{2[12]}$. The excluded Betti tables do not form components of the stable base locus. 
When a given Betti table has multiple relevant destabilizing objects, only one is colored. 

\begin{align*}
G\hspace{4pt} : \quad  &   \OOT(-6)^2\xrightarrow{\hspace{7mm}} {\color{gray}\OOT(-4)^3} \\
G_{1}: \quad  &   \OOT(-6)^2\oplus {\color{gray}\OOT(-5)} \xrightarrow{\hspace{7mm}} \OOT(-5) \oplus {\color{gray}\OOT(-4)^3} \\
G_{2}: \quad  &  \OOT(-6)^3 \xrightarrow{\hspace{7mm}}  \OOT(-5)^3 \oplus {\color{gray}\OOT(-3)} \\
G_{3}: \quad  &  \OOT(-6)^2\oplus {\color{gray}\OOT(-5)^2} \xrightarrow{\hspace{7mm}} \OOT(-5)^2 \oplus {\color{gray}\OOT(-4)^3}\\
G_{4}: \quad  &  \OOT(-7) \oplus {\color{gray}\OOT(-5)}\xrightarrow{\hspace{7mm}} \OOT(-5) \oplus {\color{gray}\OOT(-4)\oplus \OOT(-3)} \\
G_{5}: \quad  &  \OOT(-7)\oplus {\color{gray}\OOT(-5)^3} \xrightarrow{\hspace{7mm}} \OOT(-6) \oplus {\color{gray}\OOT(-4)^4}\\
G_{6}: \quad  &  \OOT(-7)^2 \oplus {\color{gray}\OOT(-4)} \xrightarrow{\hspace{7mm}}  \OOT(-6)^2 \oplus {\color{gray}\OOT(-3)^2} \\
G_{7}: \quad  &  \OOT(-8) \xrightarrow{\hspace{7mm}} \OOT(-6) \oplus {\color{gray}\OOT(-2)} \\
G_{8}: \quad  &  \OOT(-8)\oplus{\color{gray}\OOT(-5)^2}\xrightarrow{\hspace{7mm}}\OOT(-7)\oplus{\color{gray}\OOT(-4)^2\oplus\OOT(-3)}\\
G_{9}: \quad  &  \mathcal{O}_{\PP^2}(-9)\oplus {\color{gray}\mathcal{O}_{\PP^2}(-5)}\xrightarrow{\hspace{7mm}}\mathcal{O}_{\PP^2}(-8)\oplus{\color{gray}\OO_{\PP^2}(-3)^2}\\
G_{10}: \quad  &  \mathcal{O}_{\PP^2}(-10)\oplus {\color{gray}\mathcal{O}_{\PP^2}(-4)^2}\xrightarrow{\hspace{7mm}}\mathcal{O}_{\PP^2}(-9)\oplus{\color{gray}\OO_{\PP^2}(-3)^3}\\
G_{11}: \quad  &  \mathcal{O}_{\PP^2}(-11)\oplus {\color{gray}\mathcal{O}_{\PP^2}(-4)}\xrightarrow{\hspace{7mm}}\mathcal{O}_{\PP^2}(-10)\oplus{\color{gray}\OO_{\PP^2}(-3)\oplus\OO_{\PP^2}(-2)}\\
G_{12}: \quad  &  \mathcal{O}_{\PP^2}(-12)\oplus {\color{gray}\mathcal{O}_{\PP^2}(-3)}\xrightarrow{\hspace{7mm}}\mathcal{O}_{\PP^2}(-11)\oplus{\color{gray}\OO_{\PP^2}(-2)^2}\\
G_{13}: \quad  &   \mathcal{O}_{\PP^2}(-13)\xrightarrow{\hspace{7mm}}\mathcal{O}_{\PP^2}(-12)\oplus{\color{gray}\OO_{\PP^2}(-1)}
\end{align*}

\medskip\noindent
The novel aspect of this example, in addition to being a newly worked out example, is the behavior of sheaves with Betti table $G_1$.
Sheaves with that Betti table have resolution 
\begin{align*}
0\xrightarrow{\hspace{7mm}} \OO(-5) \oplus\OO(-6)^2\overset{\left(
\begin{array}{ccc}
0   & l_4 & l_5\\ 
l_1 & q_1 & q_4\\
l_2 & q_2 & q_5 \\
l_3 & q_3 & q_6 \\
\end{array}\right)}{\xrightarrow{\hspace{7mm}}} \OO(-5) \oplus \OO(-4)^3\xrightarrow{\hspace{7mm}} \mathcal{I}_Z\xrightarrow{\hspace{7mm}} 0.
\end{align*}
The general such sheaf is $\mathcal{T}_{\PP^2}(-5)$-admissible, is destabilized by the same sheaf, and is the generic sheaf in $\mathcal{G}_1$.
If $l_1$, $l_2$, and $l_3$ are linearly dependent, then the sheaf is $\mathcal{I}_{1}(-3)$-admissible,  is destabilized by the same sheaf, and is the generic sheaf in $C_{11}(12)$.
If $l_4$ and $l_5$ are linearly dependent, then the sheaf is $\mathcal{I}_{7}(-1)$-admissible,  is destabilized by the same sheaf, and is the generic sheaf in $L_5(12)$.
Note, two linear polynomials being linearly dependent is more conditions than three, and we see this confirmed in the codimension of the corresponding stable base locus components.

\medskip\noindent
We omit the explanation of all further walls, but list them in the table below. 

\begin{center}
\scalebox{.9}{
\begin{tabular}{cc|c|cc}
Geometry&Syzygies & Interpolating bundle $V$& $\mathbf{Bs}(D_V)$ & $\mu(D_V)$\\[1mm]  \hline
Gaeta general & $G, \mathcal{O}_{\PP^2}(-4)^3$ & $\mathcal{T}_{\PP^2}(2)$ & $\mathcal{G}_1$ & 7/2 \\[2mm] \hline
 & $G_1, \mathcal{T}_{\PP^2}(-5)$ & $coker\left(\OOT(1)^2\to\OOT(3)^9\right)$ & $C_{11}(12)$ & 25/7  \\[2mm] \hline 
$C_{11}(12)$&$G_{1}, \mathcal{I}_1(-3)$ & $coker\left(\OOT(1)^2\xrightarrow{\hspace{3mm}}\OOT(2)^2\oplus\OOT(3)^3\right)$  & $(L_{5}\cup Q_9\cup C_{12})(12)$ &11/3    \\[2mm] \hline
$C_{12}(12)$ & $G_{2}, \OOT(-3)$ & $coker\left(\OOT(1)^4\xrightarrow{\hspace{3mm}}\OOT(2)^6\right)$ & $Q_{10}(12)$& 4 \\[2mm]\hdashline
$Q_9(12)$ &$G_{3},\mathcal{I}_3(-2)$  & $coker\left(\OOT(1)^4\xrightarrow{\hspace{3mm}}\OOT(2)^6\right)$ & $Q_{10}(12)$ & 4 \\[2mm] \hdashline
$L_5(12)$ & $G_{1},\mathcal{I}_7(-1)$ &$coker\left(\OOT(1)^4\xrightarrow{\hspace{3mm}}\OOT(2)^6\right)$ & $Q_{10}(12)$ & 4\\[2mm]  \hline
$Q_{10}(12)$ &$G_{4}, \mathcal{I}_2(-2)$ &  $coker\left(\OOT^3\xrightarrow{\hspace{3mm}}\OOT(1)\oplus\OOT(2)^4\right)$ & $L_6(12)\cup Q_{11}(12)$ &9/2 \\[2mm] \hline
$L_6(12)$ & $G_{5}, \mathcal{I}_6(-1)$ & $coker\left(\OOT^6\xrightarrow{\hspace{3mm}}\OOT(1)^6\oplus\OOT(2)^2\right)$ & $Q_{12}(12)$ &  5\\[2mm] \hdashline 
$Q_{11}(12)$ & $G_{6},\mathcal{I}_1(-2)$ & $coker\left(\OOT^6\xrightarrow{\hspace{3mm}}\OOT(1)^6\oplus\OOT(2)^2\right)$ & $Q_{12}(12)$ &5  \\[2mm] \hline
$Q_{12}(12)$ & $G_7,\OOT(-2)$   & $coker\left(\OOT^9\xrightarrow{\hspace{3mm}}\OOT(1)^{11}\right)$& $L_7(12)$& 11/2 \\[1.5mm] \hline
$L_7(12)$ & $G_8,\mathcal{I}_5(-1)$ &  $coker\left(\OOT(-1)^2\oplus\OOT^6\xrightarrow{\hspace{3mm}}\OOT(1)^{10}\right)$&$L_8(12)$ & 6\\[1.5mm] \hline 
$L_8(12)$ & $G_9,\mathcal{I}_4(-1)$ & $coker\left(\OOT(-1)^6\xrightarrow{\hspace{3mm}}\OOT(1)^8\right)$  &$L_9(12)$ & 7\\[1.5mm] \hline 
$L_9(12)$ & $G_{10},\mathcal{I}_3(-1)$ & $coker\left(\OOT(-1)^{10}\xrightarrow{\hspace{3mm}}\OOT^6\oplus\OOT(1)^6\right)$ &$L_{10}(12)$ & 8\\[1.5mm] \hline 
$L_{10}(12)$ & $G_{11},\mathcal{I}_2(-1) $ & $coker\left(\OOT(-1)^{14}\xrightarrow{\hspace{3mm}}\OOT^{12}\oplus\OOT(1)^4\right)$ &$L_{11}(12)$ & 9\\[1.5mm] \hline 
$L_{11}(12)$ & $G_{12},\mathcal{I}_1(-1)$ & $coker\left(\OOT(-1)^{18}\xrightarrow{\hspace{3mm}}\OOT^{18}\oplus\OOT(1)^2\right)$ &$L_{12}(12)$ & 10\\[1.5mm] \hline 
$L_{12}(12)$ & $G_{13},\OOT(-1)$ & $coker\left(\OOT(-1)^{22}\xrightarrow{\hspace{3mm}}\OOT^{24}\right)$ &$\emptyset$ & 11\\[1.5mm] 
\end{tabular}}
\end{center}

\medskip\noindent
Following \cite{LZ}, below are the Bridgeland destabilizing objects needed to compute \textit{all} the walls for $\PP^{2[12]}$. They all occur in the minimal free resolutions.
Some syzygy loci are not listed below because they fail to be general in an irreducible component of the stable base locus.

\begin{enumerate}
\item A rank 3 wall $W_{-5}$ corresponding to the destabilizing object $\mathcal{O}_{\PP^2}(-3)^3$.\\
This wall is also given by general sheaves with log Chern character $(2,-3,\frac{3}{2})$ and for some sheaves by $\mathcal{I}_{4}(-2)$.
\item A rank 2 wall $W_{\frac{-71}{14}}$ corresponding to the destabilizing object $\mathcal{T}_{\PP^2}(-5)$
\item A rank 1 wall $W_{\frac{-31}{6}}$ corresponding to the destabilizing object $\mathcal{I}_{1}(-3)$.
\item A rank 1 wall $W_{\frac{-11}{2}}$ corresponding to the destabilizing objects  $\mathcal{O}_{\PP^2}(-3)$, $\mathcal{I}_{3}(-2)$, $\mathcal{I}_{7}(-1)$.
\item A rank 1 wall $W_{-6}$ corresponding to the destabilizing object $\mathcal{I}_{2}(-2)$.
\item A rank 1 wall $W_{\frac{-13}{2}}$ corresponding to the destabilizing objects $\mathcal{I}_{1}(-2)$ and $\mathcal{I}_{6}(-1)$.
\item A rank 1 wall $W_{-7}$ corresponding to the destabilizing object $\mathcal{O}_{\PP^2}(-2)$.
\item A rank 1 wall $W_{\frac{-15}{2}}$ corresponding to the destabilizing object $\mathcal{I}_{5}(-1)$.
\item A rank 1 wall $W_{\frac{-17}{2}}$ corresponding to the destabilizing object $\mathcal{I}_{4}(-1)$.
\item A rank 1 wall $W_{\frac{-19}{2}}$ corresponding to the destabilizing object $\mathcal{I}_{3}(-1)$.
\item A rank 1 wall $W_{\frac{-21}{2}}$ corresponding to the destabilizing object $\mathcal{I}_{2}(-1)$.
\item A rank 1 wall $W_{\frac{-23}{2}}$ corresponding to the destabilizing object $\mathcal{I}_{1}(-1)$.
\item A rank 1 wall $W_{\frac{-25}{2}}$ corresponding to the destabilizing object $\mathcal{O}_{\PP^2}(-1)$.
\end{enumerate}

\section*{Appendix A: Computations in Macaulay2}

\noindent 
This appendix contains an implementation, using Macaulay2, for proving Lemma \ref{InterpolationTan} for the cases $s\leq5$. The cases $s\leq 4$ can be treated uniformly so we only address $s=4$. In the case $s=5$, an optimization is possible using the Resolution \eqref{eq: esp res} in Lemma \ref{InterpolationTan}. 

\subsection*{Case $s=4$.}
\begin{verbatim}
PP2 = QQ[x, y, z];
--First we define the bundle M
B2 = flatten entries basis(2, PP2);
C4 = for i in 0..18 list (random(PP2^{2}, PP2^{0}))_0_0;
A = matrix for i in 0..18 list
	for j in 0..3 list(
		if j == 3 then C4_i
		else if j == floor(i/6) then B2_(i-6*j)
		else 0
	);
M = sheaf coker map(PP2^{7}^19, PP2^{5}^4, A);
--Next we define a general ideal in J
B = random(PP2^{-9,5:-8}, PP2^{4:-10,-9});
--This line is necessary to ensure the map is minimal
B = B - sub(B, {x=>0, y=>0, z=>0});

I = fittingIdeal(1, coker B);
--The result should be 0
HH^0(M ** sheaf module I)
\end{verbatim}

\medskip\noindent
    Running the code above in the Institute of Mathematics' computer cluster at UNAM took 62830 seconds. The cases $s<4$ are considerably faster.

\subsection*{Case $s=5$.}
In this case, instead of computing the cohomology groups of $M\otimes \mathcal{I}_Z$, we compute the cohomology of $M\otimes V$, where $V$ is the bundle \eqref{eq: esp res} in Lemma \ref{InterpolationTan}.
\begin{verbatim}
PP2 = QQ[x, y, z];

B2 = flatten entries basis(2, PP2);
C4 = for i in 0..23 list (random(PP2^{2}, PP2^{0}))_0_0;
A = matrix for i in 0..23 list
	for j in 0..4 list(
		if j == 4 then C4_i
		else if j == floor(i/6) then B2_(i-6*j)
		else 0
	);
M = sheaf coker map(PP2^{9}^24, PP2^{7}^5, A);

V = sheaf ker random(PP2^{-11}^2, PP2^{-12}^5);

time HH^1(M ** V)
\end{verbatim}

\end{document}